\renewenvironment{proof}{{\bfseries Proof.}}{\qed}
\numberwithin{equation}{section} 
\theoremstyle{theorem}
\newtheorem{theorem}{Theorem}[section] 
\newtheorem{pro}[theorem]{Proposition} 
\newtheorem{cor}[theorem]{Corollary} 
\newtheorem{lemma}[theorem]{Lemma} 
\theoremstyle{definition}
\newtheorem{defn}[theorem]{Definition}
\newcommand{\n}{\newline}
\newcommand{\tb}{\textbf}
\newcommand{\ap}{\alpha_{p}}
\newcommand{\imp}{\Rightarrow}
\newcommand{\z}{\mathbb{Z}}
\newcommand{\R}{\mathbb{R}}
\newcommand{\ps}{S'}
\newcommand{\g}{\gamma_{p}^{m}}
\newcommand{\pc}{$\mathcal C'$}
\newcommand{\secref}[1]{Section~\ref{#1}}
\newcommand{\thmref}[1]{Theorem~\ref{#1}}
\newcommand{\lemref}[1]{Lemma~\ref{#1}}
\newcommand{\propref}[1]{Proposition~\ref{#1}}
\newcommand{\corref}[1]{Corollary~\ref{#1}}
\newcommand{\defref}[1]{Definition~(\ref{#1})}
\numberwithin{equation}{section} 
\def \h{{\bf H}} 
\begin{document}
	\title{Reciprocity in the Hecke Groups}
	\author{Debattam Das \and Krishnendu Gongopadhyay}
	\address{Indian Institute of Science Education and Research (IISER) Mohali,
		Knowledge City,  Sector 81, S.A.S. Nagar 140306, Punjab, India}
	\email{debattam123@gmail.com}
	\email{krishnendug@gmail.com, krishnendu@iisermohali.ac.in}
	\subjclass[2020]{Primary 11F06; Secondary 20H10, 20H05}
	\keywords{ Hecke groups, Fuchsian groups, reversible elements, reciprocal elements, infinite dihedral subgroups}
	
	\maketitle	
	\begin{abstract}
		An element $g$ in a group $G$ is called \emph{reciprocal} if there exists $h \in G$ such that $g^{-1}=hgh^{-1}$.   The reciprocal elements are also known as `real elements' or `reversible elements' in the literature. In this paper, we consider the Hecke groups which are Fuchsian groups of first kind and generalization of the modular group. We have classified and parametrized the reciprocal classes in the Hecke groups. This generalizes a result by Sarnak on the reciprocal elements in the modular group. 
	\end{abstract}

	\vspace{0.5 in}

	\section{Introduction} 
	An element $g$ in a group $G$ is called \emph{reciprocal} if there exists $h \in G$ such that $g^{-1}=hgh^{-1}$. The reciprocal elements are also known as `real elements' and `reversible elements'.  We will call the conjugating element $h$ as a \emph{reciprocator} of $g$. A reciprocator is also known as a \emph{reverser} or a \emph{reversing symmetry}. For a reciprocal element $g$, the set of all reciprocators is an index two extension of the centralizer of $g$, see \cite{BR}. 
	We call an element $g$ in $G$ as  \emph{strongly reciprocal} if $h$ is an involution, i.e., $h^2=1$. Equivalently, an element $g$ is strongly reciprocal if it is a product of (at most) two involutions. Such elements are also known as `bi-reflectional', `strongly real', or `strongly reversible'. A strongly reciprocal element is reciprocal,  but the converse is not generally true. The reciprocal and strongly reciprocal elements appear at several places in the literature, e.g.  \cite{FZ}, \cite{l}, \cite{lr}, \cite{os}, \cite{Sa} and the references therein.  In a group $G$, given an infinite order strongly reciprocal element $g$ and its reciprocator (or reverser) $r$, the group $\langle g, r\rangle$ is an infinite dihedral subgroup. When $G$ is a Fuchsian group, the conjugacy classes of maximal infinite dihedral subgroups are in one-to-one correspondence with primitive reciprocal geodesics in the corresponding surface. Here, a geodesic is reciprocal if it is equivalent to itself with orientation reversed.   

 Sarnak \cite{Sa}  parametrized and counted reciprocal classes for the modular group. Reciprocity in the modular group ${\rm PSL}(2, \z)$ has been understood from many other independent viewpoints as well. For a survey see \cite[Chapter 7]{os} and the references therein.  

The Hecke groups are generalization of the modular group. These are Fuchsian groups of the first kind. Our main aim in this paper is to classify the reciprocal elements in the Hecke groups. It is also a natural problem to ask about reversibility in an arbitrary Fuchsian group. In the following, we first answer this question. 
	
	\begin{theorem}\label{fuchth}
	Let $\Gamma$ be a Fuchsian group. 
	
	\begin{enumerate} 
		\item An element $g$ in $\Gamma$ is reciprocal if and only if it is strongly reciprocal. Further, the reciprocators of a reciprocal element are involutions. 
		
		\item The only possible reciprocal elements in $\Gamma$  are either the hyperbolic elements or the involutions.
	\end{enumerate} 
\end{theorem}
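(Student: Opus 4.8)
The plan is to exploit the geometry of $\mathrm{PSL}(2,\R)$ acting on the upper half-plane, classifying elements into elliptic, parabolic, and hyperbolic types via the trace, and to use the fact that a Fuchsian group is a discrete subgroup with no continuous one-parameter subgroups trapped inside it. The key algebraic fact I would establish first is a reduction from reciprocity to strong reciprocity. Suppose $g$ is reciprocal with reciprocator $h$, so $hgh^{-1}=g^{-1}$. Then $h^2$ commutes with $g$, since $h^2 g h^{-2}=h g^{-1}h^{-1}=g$. The strategy is to show that for the relevant element types in a \emph{discrete} group, this forces $h^2$ to be either trivial or expressible so that $h$ can be replaced by an involution.

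\medskip

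First I would handle part (2), since it constrains which elements can even be reciprocal, and this feeds into part (1). If $g$ is reciprocal then $g$ and $g^{-1}$ are conjugate, hence they have the same trace (up to sign in $\mathrm{PSL}$), which is automatic, so the trace alone does not obstruct reciprocity; instead I would argue via the centralizer and fixed-point structure. A parabolic element fixes a single point on the boundary $\partial\h$ and its centralizer consists of parabolics fixing that same point together with the identity; conjugating $g$ to $g^{-1}$ would require an element swapping the two ``ends'' of the parabolic translation, but in a discrete group the stabilizer of a parabolic fixed point cannot contain such a reversing element without creating an elliptic element of infinite order or violating discreteness. So I would rule out parabolics, leaving only hyperbolic elements and involutions (elliptic elements of order $2$) as candidates. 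Elliptic elements of order greater than $2$ cannot be reciprocal because $g$ and $g^{-1}$ would be distinct rotations by the same angle about the same fixed point but conjugation preserves the rotation angle's sign relative to orientation—more carefully, a reciprocator would have to reverse orientation about the fixed point, which $\mathrm{PSL}(2,\R)$ elements cannot do.

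\medskip

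For part (1), the heart is the hyperbolic case. A hyperbolic $g$ has an axis (a geodesic) with two fixed endpoints $\alpha,\beta$ on $\partial\h$, and $g^{-1}$ has the same axis with the translation direction reversed. A reciprocator $h$ must therefore map the axis to itself while swapping $\alpha$ and $\beta$. Any orientation-preserving isometry of $\h$ that swaps the two endpoints of a geodesic and preserves that geodesic setwise must act on the axis as a reflection through its midpoint, i.e., $h$ fixes the midpoint of the axis and rotates by $\pi$ there; such an element is an involution. This is the geometric crux: I would verify that the unique (up to the stabilizer of $g$) type of reversing symmetry is the half-turn about a point on the axis, and that $h^2$ fixing the same point and commuting with $g$ forces $h^2=1$ by discreteness (no infinite-order elliptic). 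The involution case of $g$ itself is trivial: an involution is its own inverse, so it is reciprocal with reciprocator the identity, and it is strongly reciprocal.

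\medskip

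The main obstacle I anticipate is the parabolic exclusion and making the ``reversing symmetry is a half-turn'' argument fully rigorous using discreteness rather than just continuous geometry. One must carefully rule out that $h$ could be hyperbolic or parabolic while still conjugating $g$ to $g^{-1}$: the clean way is to note that $h$ permutes $\{\alpha,\beta\}$ nontrivially, so $h$ cannot share a fixed point with $g$ on the boundary, which pins down $h$ as elliptic with fixed point on the axis; then $h^2$ commutes with $g$ and fixes a point of $\h$, and the discreteness of $\Gamma$ forbids infinite-order elliptics, so $h^2$ has finite order while also lying in the centralizer of a hyperbolic element—forcing $h^2=1$. I would present this as the key lemma and then assemble (1) and (2) from it.
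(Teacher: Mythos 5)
Your core strategy---invariance of the fixed-point set, showing a reciprocator of a hyperbolic element must interchange the two endpoints of the axis, and concluding it is a half-turn about a point of the axis---is correct and is essentially the paper's own argument (its key lemma runs: $h$ either fixes both endpoints or swaps them; in the swapping case $h$ has a third fixed point, so $h^2$ fixes three points and hence $h^2=1$). At one point your route is actually more economical: the paper needs a primitivity reduction plus the classification of elementary Fuchsian groups (Katok, Theorem 2.4.3) to dispose of the case where $h$ fixes both endpoints, whereas the dynamical observation implicit in your write-up---$h$ must carry the attracting fixed point of $g$ to the attracting fixed point of $hgh^{-1}=g^{-1}$, i.e.\ to the repelling fixed point of $g$---rules that case out immediately, and it shows the conclusion for hyperbolic elements already in ${\rm PSL}(2,\R)$ without any appeal to discreteness. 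Your elliptic argument (orientation-preserving conjugation preserves the signed rotation angle) is also sound and parallels the paper's commutation argument.

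The genuine weak spot is the parabolic exclusion, which you flag as the main obstacle but do not actually repair. A parabolic has a \emph{single} fixed point on $\partial \h^2$, so there are no ``two ends'' for a reciprocator to swap, and ``creating an elliptic element of infinite order'' is not the mechanism that fails. The clean argument is: if $g$ is parabolic with fixed point $\xi$ and $hgh^{-1}=g^{-1}$, then $h(\xi)=\xi$; normalizing $\xi=\infty$ so that $g\colon z\mapsto z+t$ with $t\neq 0$, every element of ${\rm PSL}(2,\R)$ fixing $\infty$ has the form $z\mapsto az+b$ with $a>0$, whence $hgh^{-1}\colon z\mapsto z+at$, and $at=-t$ is impossible since $a>0$. (No discreteness is needed here; alternatively, the paper's route uses discreteness to force the stabilizer of $\xi$ in $\Gamma$ to consist of parabolics fixing $\xi$, which commute with $g$, giving $g=g^{-1}$, impossible for an infinite-order element.) With that substitution, and with the endpoint-swapping claim justified by the attracting/repelling fixed-point remark above rather than merely asserted, your outline becomes a complete proof.
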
 

The above theorem gives an answer to an open problem in \cite[p. 104]{os} when asking it for Fuchsian groups. By the above theorem, any reciprocator of a reciprocal element is an involution.  So, we will not prefix `strongly' anymore before reciprocal elements.  The above result might have been known to experts, eg. \cite{fk}. However, we do not know of any literature where this has been stated explicitly as in the above theorem.  

Now, recall the notion of Hecke groups. 
\begin{defn}
	The Hecke group $ \Gamma_{p} $ is a Fuchsian group which is generated by 
	the maps
	\[\iota: z\mapsto-\dfrac{1}{z}~~~~~ \\ \hbox{ and } \\ ~ ~~~~ \alpha_p: z\mapsto z+ 2\cos {\pi}/{p}, ~ p \geq 3. \]
	
	Geometrically, the Hecke group can be identified with  $ \z_{2} \ast \z_{p}$. 
\end{defn} 
Note that the Hecke group $\Gamma_3$ is the modular group ${\rm PSL}(2, \z)$. In \cite[p.143]{os}, the reciprocity in $\Gamma_p$, for $p \geq 4$, has been listed as an open problem. Our work gives a complete solution to this problem. To state our result, we first 
define the following.
\begin{defn}\label{fpr}
	Let $A$ be a hyperbolic element in ${ \rm PSL}(2,\R) $, represented by the matrix $\tilde A= \begin{bmatrix}
		a&& b\\
		c&& d
	\end{bmatrix}$. 
	We define the fixed-point ratio of $A$ by 
	$$\theta_{A}=\begin{cases}
		\dfrac{b-c}{a-d}, ~~~~~~~~\hbox{  where,  } a\neq d. \\
		\infty,~~~~~~ \hbox{ where, } a=d, ~ b \neq c.\\
		1, ~~~~~~~\hbox{ where,  }a=d ,~ b=c.	
	\end{cases}
	$$ 
\end{defn} 

The fixed-point ratio is an invariant of the cyclic group generated by $A$. 
When $a \neq d$ or, $a=d, ~ b \neq c$,  the fixed-point ratio is given by $ \theta_{A}=-\dfrac{1+u_{1}u_{2}}{u_{1}+u_{2}}$,  where $u_1$ and $u_2$ are the fixed-points of $A$. We would like to note that the degenerate case $a=d$ may arise in Hecke groups.  In this case,  any non-zero powers of $A$ possess `symmetric' fixed points of the form $\{p, -p\}$, where $p\in \mathbb{R}$. In the particular case when $a=d$ and $b=c$, the fixed point set is $\{-1,1\}$, and the fixed-point ratio is defined to be one of the fixed points.  Such elements assume a special role in the Hecke group $\Gamma_p$ when $p$ is even.  

The following theorem classifies reciprocal elements in the Hecke groups. It also generalizes the corresponding result for the modular group $\Gamma_3$ in  \cite{Sa},  \cite[Proposition 7.30]{os}.  

\begin{theorem}\label{thm}
	Let $\Gamma_p$ be the Hecke group for $p \geq 3$. Let $g \in \Gamma_p$. Then the following are equivalent.
	\begin{enumerate}
		\item $g$ is reciprocal. 
		\item For $p$ odd,  $g$ is either an involution or conjugate to a hyperbolic element $h$ such that the fixed-point ratio of $h$ is zero; equivalently, lifts of $h$ in $\rm SL(2,\R) $ are symmetric matrices. 
		
		For $p$ even, $g$ is either an involution or conjugate to a hyperbolic element $h$ such that the fixed-point ratio of $h$ is either zero, $ 1 $,  or,   $\cos {\pi}/{p}$.
	\end{enumerate}  
\end{theorem}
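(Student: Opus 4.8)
The plan is to reduce to the hyperbolic case and then convert reciprocity into an incidence statement between the axis of $g$ and the fixed points of the involutions of $\Gamma_p$. By \thmref{fuchth} every involution is trivially reciprocal and every reciprocator is itself an involution, so it suffices to decide, for a \emph{hyperbolic} $g$, when there is an involution $r\in\Gamma_p$ with $rgr^{-1}=g^{-1}$. I represent $\iota$ by $\begin{bmatrix}0&-1\\1&0\end{bmatrix}$ and $\alpha_p$ by $\begin{bmatrix}1&\lambda\\0&1\end{bmatrix}$ with $\lambda=2\cos(\pi/p)$, and I use that every involution of $\mathrm{PSL}(2,\R)$ is the half-turn about its unique fixed point $w\in\mathbb{H}$. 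The first step is the geometric reformulation: an involution $r$ satisfies $rgr^{-1}=g^{-1}$ exactly when $r$ preserves the axis of $g$ and reverses its orientation, i.e. exactly when $w$ lies on the axis of $g$. Hence $g$ is reciprocal if and only if its axis passes through the fixed point of some involution of $\Gamma_p$.

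Next I classify involutions up to conjugacy using $\Gamma_p\cong\z_2\ast\z_p$. Since every finite-order element is conjugate into a free factor, an involution is conjugate either to $\iota$ (fixed point $i$) or, when $2\mid p$, to $\rho^{p/2}$, where $\rho:=\iota\alpha_p$ generates the $\z_p$-factor. A short computation shows $\rho$ fixes $\omega=-\cos(\pi/p)+i\sin(\pi/p)$, a point of the unit circle, so $\rho^{p/2}$ is the half-turn about $\omega$. The two classes are distinct, as nontrivial elements of different free factors are never conjugate, and for odd $p$ the factor $\z_p$ carries no involution. Thus the set of fixed points of involutions is $\Gamma_p\cdot i$ for odd $p$, and $\Gamma_p\cdot i\cup\Gamma_p\cdot\omega$ for even $p$.

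Combining the two steps, $g$ is reciprocal iff its axis meets $\Gamma_p\cdot i$, or (for even $p$) $\Gamma_p\cdot\omega$. Conjugating $g$ by the group element that carries the relevant fixed point to $i$ or to $\omega$, I may assume the axis passes through $i$ or through $\omega$ itself, and then read off the fixed-point ratio of the conjugate $h$. The key computation is that if the axis of a non-degenerate hyperbolic element passes through a point $x_0+iy_0$ of the unit circle, then $\theta_h=-x_0$; this yields $\theta_h=0$ for $i$ and $\theta_h=\cos(\pi/p)$ for $\omega$ (the latter only when $2\mid p$). The residual degenerate possibility is that the axis is the unit semicircle, i.e. $h$ has fixed points $\{1,-1\}$ and a symmetric lift with $a=d$, $b=c$; such an axis meets both $i$ and $\omega$, and by \defref{fpr} it contributes the value $\theta_h=1$. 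The converse implication is immediate from the computation $\iota M\iota^{-1}=\begin{bmatrix}d&-c\\-b&a\end{bmatrix}$, which equals $M^{-1}=\begin{bmatrix}d&-b\\-c&a\end{bmatrix}$ in $\mathrm{PSL}(2,\R)$ precisely when $b=c$: a symmetric $h$ (so $\theta_h=0$ or $1$) is reversed by $\iota$, while an $h$ with $\theta_h=\cos(\pi/p)$ is reversed by $\rho^{p/2}$, both lying in $\Gamma_p$.

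What remains, and what I expect to be the main obstacle, is the parity dichotomy that removes the values $1$ and $\cos(\pi/p)$ for odd $p$. The disappearance of $\cos(\pi/p)$ follows from the non-existence of the class $[\rho^{p/2}]$ when $p$ is odd, but the genuinely delicate point is ruling out $\theta=1$. The degenerate symmetric elements ($a=d$, $b=c$, hence fixed points $\{1,-1\}$) correspond to solutions of $(a-b)(a+b)=1$ inside the entry ring of $\Gamma_p$; over $\z$ this forces the non-hyperbolic solutions $a=\pm1$, $b=0$, whereas for even $p$ genuine hyperbolic solutions appear (for instance $a=3$, $b=2\sqrt{2}$ when $p=4$, where $a^2-b^2=1$). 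Establishing this number-theoretic statement, together with making the free-product conjugacy reduction precise modulo the sign ambiguity in $\mathrm{PSL}(2,\R)$, is where I expect the real work to lie.
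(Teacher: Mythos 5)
Your reduction to the hyperbolic case, the geometric reformulation (an involution $r$ reverses a hyperbolic $g$ exactly when the fixed point of $r$ lies on the axis of $g$), the classification of involutions up to conjugacy via the free-product structure $\z_{2}\ast\z_{p}$, and the incidence computation ($\theta_h=-x_0$ when the axis crosses the unit circle at $x_0+iy_0$) constitute a genuinely different route from the paper's. The paper instead classifies reciprocators by a long word-combinatorial argument in the free product (\lemref{ext}) and extracts the admissible fixed-point ratios by explicit matrix identities (\corref{le2}, \lemref{le1}); your orbit-incidence argument replaces both of these and, for even $p$ and for both implications, it is complete, correct, and cleaner than the paper's computations (the fact that torsion elements of a free product are conjugate into the factors is standard, and the sign ambiguity you worry about is harmless).

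The genuine gap is exactly where you placed it, but the fix you sketch cannot work. For odd $p$ your argument yields $\theta_h\in\{0,1\}$ (an axis through $i$ either gives $\theta_h=0$ or has endpoints $\{-1,1\}$), while the theorem asserts $\theta_h=0$; so one must show that no hyperbolic element of $\Gamma_p$, $p$ odd, has fixed points $\{-1,1\}$. You propose to rule this out by showing $a^2-b^2=1$ has no hyperbolic solutions ``inside the entry ring,'' but that statement is false for every odd $p\geq 5$: in $\z[\lambda_5]$, where $\lambda_5=2\cos(\pi/5)=(1+\sqrt5)/2$, one has $\sqrt5=2\lambda_5-1$ and $(\sqrt5)^2-2^2=1$, so
\[
\begin{bmatrix}
\sqrt5 & 2\\
2 & \sqrt5
\end{bmatrix}
\]
is a hyperbolic, degenerate-symmetric element of ${\rm SL}(2,\z[\lambda_5])$. (Your argument over $\z$ is correct, but $p=3$ is the only odd case where the entry ring is $\z$.) The real obstruction is membership in $\Gamma_p$ itself, which for $p\geq4$ is a proper, non-obvious subgroup of ${\rm PSL}(2,\z[\lambda_p])$; it is detected by the Lang--Lang pseudo-Euclidean criterion of \propref{Li} (indeed $(\sqrt5,2)_5=2\lambda_5-3\neq1$, so the matrix above is not in $\Gamma_5$) or by a word argument in the spirit of \lemref{ext}, equivalently by showing that for odd $p$ the orbit $\Gamma_p\cdot i$ meets the unit circle only at $i$. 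No ring-level computation can see this. It is only fair to add that the paper is itself essentially silent on this same point: its odd-$p$ lemma concludes that a reciprocal element is conjugate to one with a \emph{symmetric} lift, which under \defref{fpr} means $\theta\in\{0,1\}$, and the passage to $\theta=0$ is asserted rather than proved. So you have correctly isolated the delicate step, but the number-theoretic route you propose for it fails at the first odd $p$ beyond the modular group.
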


The next question about reciprocity in the Hecke groups $\Gamma_p$ would be to parametrize the reciprocal classes.   For this,  we will consider only the hyperbolic reciprocal classes.  Given a reciprocal element $A$, one needs to know the number of elements in the reciprocal class of $A$ with the same fixed-point ratio $\theta_A$. The following theorem answers this question. Sarnak \cite{Sa} proved a version of this theorem for the modular group. Our proof of the following theorem follows similar ideas for the Hecke group. To state the theorem, we first define the following. 

\begin{defn}
	A reciprocal element $ M $ in $\Gamma_{p}$ is said to be \textbf{$p$-reciprocal} if the fixed point ratio $\theta_{M}$ is either  $\cos \pi/p $ or,  $ 1 $. Equivalently, \tb{$p$-reciprocal } elements have  $ \g $ as reciprocator.
\end{defn}
\begin{defn}
	A reciprocal element $ M $ in $ \Gamma_{p} $ is said to be \tb{symmetric $p$-reciprocal} if the fixed point ratio $\theta_{M}$ is $1$. Equivalently, \tb{symmetric $p$-reciprocal} elements have reciprocators $ \g $, as well as $ \iota $.
\end{defn}
\begin{theorem}\label{sym}
	For any reciprocal hyperbolic element in $ \Gamma_{p} $, the following possibilities can occur :
	\begin{enumerate}
		\item If p is odd, every reciprocal class has exactly four symmetric elements.
		\item If p is even.
		\begin{enumerate}
			\item Suppose in the reciprocal class, the reciprocators are conjugate to $ \iota $. Then, there are exactly four symmetric elements in that class.
			\item Suppose in the reciprocal class, the reciprocators conjugate to $\gamma_{p}^{m} $. Then, there are exactly $ 2p $ $p$-reciprocal elements. 
			\item Suppose in the reciprocal class, each reciprocal element has two types of reciprocators, one type of reciprocators conjugate to $ \iota $ and another type of reciprocator conjugate to $ \gamma_{p}^{m} $.
			\begin{enumerate}
				
				\item If the reciprocal class does not contain any non-zero power of $ \iota \g $, then the class has exactly two symmetric elements and $ p $ $p$-reciprocal elements.
				
				\item If the reciprocal class contain any non-zero power of $ \iota \g$, then the class has exactly two symmetric p-reciprocal elements and $p-2$  $p$-reciprocal elements.			\end{enumerate}
		\end{enumerate}
	\end{enumerate}
\end{theorem}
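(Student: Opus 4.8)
The plan is to fix a reciprocal hyperbolic conjugacy class and count, inside it, the elements carrying each distinguished fixed-point ratio by translating everything into the combinatorics of reversers in the free product $\Gamma_p\cong\z_2\ast\z_p$. First I would record the dictionary supplied by \thmref{thm} together with two short matrix computations: writing a lift $\begin{bmatrix}a&b\\c&d\end{bmatrix}$ of $N$, the standard involution $\iota$ reverses $N$ exactly when $b=c$ (so $\theta_N=0$, or $\theta_N=1$ in the degenerate case $a=d$), while $\g=\gamma_p^{p/2}$ reverses $N$ exactly when $b-c=\cos(\pi/p)\,(a-d)$ (so $\theta_N=\cos\pi/p$, or again $\theta_N=1$ when $a=d,\ b=c$). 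Thus $\theta_N=0$ marks the involution-type reciprocator $\iota$, $\theta_N=\cos\pi/p$ marks the rotation-type reciprocator $\g$, and $\theta_N=1$ marks an element carrying \emph{both} standard reciprocators at once. I would also record, from the theory of centralizers in a free product, that $C_{\Gamma_p}(\iota)=\langle\iota\rangle$ has order $2$ while $C_{\Gamma_p}(\g)=\langle\gamma_p\rangle$ has order $p$; this asymmetry is the source of the $2$ versus $p$ (and $4$ versus $2p$) in the statement.

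Next I would set up the counting bijection. Let $M_0$ be the primitive hyperbolic with $M=M_0^{n}$; then $C_{\Gamma_p}(M)=\langle M_0\rangle$ and, since all reciprocators are involutions by \thmref{fuchth}, the reverser set is the coset $R(M)=r_0\langle M_0\rangle=\{r_0M_0^{k}:k\in\z\}$, which is exactly the set of reflections of the maximal infinite dihedral group $D=\langle M_0,r_0\rangle$. These reflections fall into two $D$-conjugacy classes according to the parity of $k$, and since $\Gamma_p$-conjugacy type is a conjugacy invariant, each parity class has a well-defined type ($\iota$ or $\g$). Parametrising the class by $N=gMg^{-1}\leftrightarrow g\langle M_0\rangle$, a direct rearrangement shows that $N$ is symmetric iff $g^{-1}\iota g\in R(M)$ and $p$-reciprocal iff $g^{-1}\g g\in R(M)$. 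The key lemma is that if $g^{-1}(\text{involution})g=r_0M_0^{k}$, then the parity of $k$ is an invariant of the coset $g\langle M_0\rangle$ (replacing $g$ by $gM_0^{j}$ shifts $k$ by $2j$), and that, for a fixed admissible parity, the solutions $g$ form a single left $C_{\Gamma_p}(\text{involution})$-coset whose members inject into $\langle M_0\rangle$-cosets (distinctness because a nontrivial elliptic element can never lie in the torsion-free group $\langle M_0\rangle$). Hence each parity class of the matching type contributes exactly $|C_{\Gamma_p}(\text{involution})|$ elements: the $\iota$-type parity gives the pair $\{N,N^{-1}\}$, while the $\g$-type parity gives the full $\langle\gamma_p\rangle$-orbit of $N$, which has size $p$ and contains $N^{-1}=\g N(\g)^{-1}$.

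With the bijection in hand the cases fall out by bookkeeping on the two parity-types. For $p$ odd there is a single conjugacy type of involution, so both parities are of type $\iota$ and the two contributions give $2+2=4$ symmetric elements, proving (1). For $p$ even I would split on the ordered pair (type of even $k$, type of odd $k$): the pair $(\iota,\iota)$ gives $4$ symmetric elements and no $p$-reciprocal ones (case 2(a)); the pair $(\g,\g)$ gives $p+p=2p$ $p$-reciprocal elements and no symmetric ones (case 2(b)); and a mixed pair gives $2$ symmetric elements from the $\iota$-parity together with $p$ $p$-reciprocal elements from the $\g$-parity (case 2(c)).

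The main obstacle is the refinement of the mixed case 2(c) into (i) and (ii), which is where symmetric and $p$-reciprocal elements can collide at $\theta=1$. From the dictionary, $\theta_N=1$ holds iff $N$ is reversed by the standard $\iota$ \emph{and} the standard $\g$, which forces $\iota\g$ to commute with $N$, i.e. $N\in\langle(\iota\g)_0\rangle\setminus\{1\}$ for the primitive root $(\iota\g)_0$ of the hyperbolic element $\iota\g$; geometrically $\iota\g$ translates along the geodesic joining the fixed points of $\iota$ and $\g$, so $\theta_N=1$ says precisely that the axis of $N$ is that distinguished geodesic. Consequently the class contains a $\theta=1$ element iff it contains a nonzero power of $\iota\g$. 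In subcase (i) no such power occurs, so the two symmetric elements have $\theta=0$ and the $p$ $p$-reciprocal elements have $\theta=\cos\pi/p$; the two families are disjoint, giving $2$ symmetric and $p$ $p$-reciprocal elements. In subcase (ii) the class is the $\iota\g$-axis class, so the nonzero power of $\iota\g$ lying in it is $\theta=1$ and is therefore one of the (only) two symmetric elements; its inverse, also $\theta=1$, is the other, so both symmetric elements are in fact symmetric $p$-reciprocal and there is no room for a $\theta=0$ symmetric element. Being reversed by the standard $\g$, these two $\theta=1$ elements are simultaneously two of the $p$ members of the $\g$-orbit; removing this overlap leaves exactly $p-2$ ordinary $p$-reciprocal elements. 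The delicate points I expect to spend the most care on are proving that exactly two elements (and no more) are shared, by pinning down the intersection of the $\iota$-coset with the $\g$-orbit through the slot-count of the key lemma, and handling the degenerate lift $a=d,\ b=c$ cleanly, since that is precisely the matrix signature of the $\theta=1$ elements at which the two reciprocator types meet.
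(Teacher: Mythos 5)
Your proposal is correct and follows essentially the same route as the paper's proof: the paper's splitting of the reciprocators $r^{n}S$ by the parity of $n$ is exactly your two parity classes of reflections in the dihedral reverser group, its count of solutions to $\eta^{-1}S\eta=\iota$ (two) versus $\eta^{-1}S\eta=\gamma_{p}^{m}$ ($p$ of them) is your centralizer-order count, and its identification of any coincidence between symmetric and $p$-reciprocal elements with powers of $\iota\gamma_{p}^{m}$ is your $\theta=1$ overlap analysis via the axis of $\iota\gamma_{p}^{m}$. The difference is only packaging: your coset/parity formulation states cleanly (e.g.\ parity as an invariant of the coset $g\langle M_{0}\rangle$) what the paper establishes by explicit manipulation of the conjugating elements $\eta_{S}$, $\eta_{S'}$.
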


The above theorem parametrizes the reciprocal classes as follows.  Let $\rho$ be the set of the conjugacy classes of reciprocal elements in the Hecke group $\Gamma_p$ other than the involution classes. 

\[ \mathcal{C}^{\prime}=\{(a,b,c,t)\in \mathbb{Z}[\lambda_{p}]^{4}~|~a,c,t>0,\  d=4ac+b^{2}, \ t^{2}-d=4,\ ((t-b)/2,c)_{p}=((t+b)/2,a)_{p}=1,\] \[ a=c,\hbox{ or } \dfrac{c-a}{b}=\cos \pi/p \}.\]
Define, $\phi^{\prime}:\mathcal{C}^{\prime}\mapsto \rho $: 
\begin{equation}\label{main}
	(a,b,c,t)\longmapsto \left\{ \begin{bmatrix}
		(t-b)/2&& a\\
		c&& (t+b)/2
	\end{bmatrix}\right\}_{\Gamma_{p}}.\end{equation} 
\begin{cor}\label{main1}
	There is a parametrization of the reciprocal classes in $\Gamma_p$ as follows.
	\begin{enumerate}
		\item If $ p $ is odd. Then, every pre-image of $\phi^{\prime}$ of a reciprocal class has two elements.
		\item If $ p=2m $ for $ m\in\z $. Then one of the following cases will occur:
		\begin{enumerate}
			\item A reciprocal class has reciprocators conjugate to  $\iota$. Then the pre-image of the reciprocal class under ${\phi^{\prime}}$ of contains two elements.
			\item A reciprocal class has reciprocators conjugate to $\gamma_{p}^{m}$. Then the pre-image of the reciprocal class under $\phi^{\prime}$ contains $ p $ elements.
			\item A reciprocal class has two types of reciprocators, some are conjugate to $\iota$, and others conjugate to $\gamma_{p}^{m}$. If the class does not contain any power of $ \iota\g $, then the pre-image of $\phi^{\prime}$ of the reciprocal class contains $ m+1 $ elements. If the reciprocal class contains a power of $ \iota \g $,  then the pre-image of the reciprocal class under  $ \phi' $ contains exactly $ m  $ elements. 
		\end{enumerate}
	\end{enumerate} 
\end{cor}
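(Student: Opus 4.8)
The plan is to read \corref{main1} directly off of \thmref{sym}, using that $\phi'$ is nothing but the identification of a tuple with a matrix. First I would record that the assignment $(a,b,c,t)\mapsto M=\begin{bmatrix}(t-b)/2 & a\\ c & (t+b)/2\end{bmatrix}$ is a bijection onto its image, since the entries of $M$ recover the tuple through $t=\operatorname{tr}M$, $a=M_{12}$, $c=M_{21}$ and $b=M_{22}-M_{11}$. Consequently the fibre $\phi'^{-1}(\{M\}_{\Gamma_p})$ is in bijection with the set of matrices $N$ that are $\Gamma_p$-conjugate to $M$ and whose associated tuple lies in $\mathcal C'$; concretely, with the elements $N$ of the class satisfying $N_{12}>0$, $N_{21}>0$, $\operatorname{tr}N>0$, the arithmetic conditions $((t\mp b)/2,\cdot)_p=1$ guaranteeing $N\in\Gamma_p$ and primitivity, and the fixed-point constraint in the last clause of $\mathcal C'$. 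That the image actually lies in $\rho$ (hyperbolic, non-involution classes) is exactly \thmref{thm}.

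Next I would match the defining constraint of $\mathcal C'$ to the fixed-point ratio. Applying \defref{fpr} to $M$ gives $\theta_M=(c-a)/b$, so the clause ``$a=c$ or $(c-a)/b=\cos\pi/p$'' says precisely $\theta_M\in\{0,\cos\pi/p\}$ in the non-degenerate range, together with the degenerate value $\theta_M=1$ in the case $a=c$, $b=0$. By the definitions preceding \thmref{sym}, these are exactly the symmetric ($\theta=0$), symmetric $p$-reciprocal ($\theta=1$), and $p$-reciprocal ($\theta=\cos\pi/p$) elements enumerated in that theorem. Thus each tuple in a fibre corresponds to one of the distinguished elements counted in \thmref{sym}, subject only to the sign normalisation $a,c,t>0$.

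The heart of the proof is then the sign normalisation, which I expect to implement as a two-to-one reduction. Passing from $g$ to $g^{-1}$ sends the tuple $(a,b,c,t)$ to $(-a,-b,-c,t)$ while preserving the class, the trace, and the value of $\theta$; since a hyperbolic $g$ has $g\neq g^{-1}$, the distinguished elements of a class split into pairs $\{g,g^{-1}\}$. For the symmetric and symmetric $p$-reciprocal elements one has $a=c$, so within a pair the two members carry opposite signs of $(a,c)$ and exactly one lands in $\mathcal C'$, giving the factor $\tfrac12$. Feeding in the counts of \thmref{sym} then produces $\tfrac12\cdot4=2$ in the odd case and in case (a), $\tfrac12\cdot2p=p$ in case (b), $\tfrac12(2+p)=m+1$ in case (c)(i), and $\tfrac12\big(2+(p-2)\big)=m$ in case (c)(ii), which are exactly the fibre sizes asserted in the statement.

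The step I expect to be the main obstacle is establishing the same factor $\tfrac12$ for the genuine $p$-reciprocal elements ($\theta=\cos\pi/p$), where $a\neq c$ and the off-diagonal entries need not share a sign, so the naive pairing $g\leftrightarrow g^{-1}$ can a priori leave a pair with no positive representative at all. Resolving this requires analysing the off-diagonal signs of the $\theta=\cos\pi/p$ elements in terms of the reciprocators $\iota$ and $\g$, and especially the role of the products $\iota\g$: the mixed-sign pairs should turn out to be precisely those tied to powers of $\iota\g$, which is exactly the dichotomy separating cases (c)(i) and (c)(ii) in \thmref{sym}. I would therefore carry out the sign bookkeeping along the infinite dihedral group $\langle g,\g\rangle$ generated by a $p$-reciprocal element and its reciprocator, tracking how the condition $a,c>0$ meets each orbit, and verify that the outcome reproduces the $m+1$ and $m$ of the corollary consistently with the $2p$, $p$ and $p-2$ of \thmref{sym}.
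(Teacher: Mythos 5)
Your strategy is the same as the paper's own proof: identify the fibre of $\phi'$ with the symmetric/$p$-reciprocal elements of the class modulo the normalisation $a,c,t>0$, pair each such element with its inverse, and divide the counts of \thmref{sym} by two. Where you differ is that you explicitly flag the weak point: for the $\theta=\cos\pi/p$ elements the two off-diagonal entries need not have the same sign, so an inverse-pair may have \emph{no} representative in $\mathcal{C}'$ at all. (The paper glosses over this: it only proves that \emph{at most} one of $A,A^{-1}$ is represented, and then divides by two as if it were \emph{exactly} one.) But your proposal does not close this gap --- it only promises to ``carry out the sign bookkeeping'' --- and, worse, the dichotomy you conjecture (mixed-sign pairs occur precisely in classes containing powers of $\iota\gamma_p^m$) is false. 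Take $p=4$ and the paper's own case (c)(i) example $W=V_1V_3V_2$, with lift $\begin{bmatrix} 4\sqrt2 & 5\\ 3 & 2\sqrt2 \end{bmatrix}$. Then $\gamma_4^{-1}W\gamma_4$ has lift $\begin{bmatrix} 7\sqrt2 & 3\\ -5 & -\sqrt2 \end{bmatrix}$: it is $p$-reciprocal ($\theta=\cos\pi/4$), yet its off-diagonal entries have opposite signs, and the same holds for its inverse and for the negated lifts. So this pair contributes nothing to the fibre; of the $2+p=6$ elements counted by \thmref{sym} for this class, only $W$ itself and the symmetric conjugate $\begin{bmatrix} \sqrt2 & 3\\ 3 & 5\sqrt2 \end{bmatrix}$ give tuples in $\mathcal{C}'$, so the halving arithmetic $\tfrac12(2+p)=m+1$ already fails here.

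The failure is systematic, not an accident of one example. A pair $\{M,M^{-1}\}$ has a representative in $\mathcal{C}'$ exactly when the product of the off-diagonal entries is positive, i.e.\ when the product of the two fixed points of $M$ is negative, i.e.\ when the axis of $M$ separates $0$ from $\infty$. For symmetric elements ($b=c$) this is automatic, which is why the count $2$ in the odd case and in case (a) is sound. But the axes of the $p$-reciprocal elements of a class all pass through the fixed point $z_0=-\cos\pi/p+i\sin\pi/p$ of $\gamma_p^m$, and conjugation by $\gamma_p$ rotates them about $z_0$ in orbits of size $m$; since the boundary interval $(0,\infty)$ subtends visual angle exactly $2\pi/p$ at $z_0$, each orbit of $m$ axes contains exactly \emph{one} separating axis (an endpoint can never land on $0$ or $\infty$, as hyperbolic elements of $\Gamma_p$ cannot fix a cusp). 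So in cases (b), (c)(i), (c)(ii) all but one inverse-pair per $\gamma_p$-orbit is killed by the positivity constraint, and the ``divide \thmref{sym} by two'' scheme --- yours and the paper's --- cannot produce the fibre sizes $p$, $m+1$, $m$ asserted in \corref{main1}. In short: you correctly located the decisive difficulty, but your proposal stops exactly where it begins, and the sign analysis, once done, refutes rather than confirms the counts you set out to prove.
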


When $p$ is odd,  we can further modify the parametrization as follows.   Let 
	\[ \mathcal{C}=\{(a,b,t)\in \mathbb{Z}[\lambda_{p}]^{3} ~|~a,t>0, d=4a^{2}+b^{2}, t^{2}-d=4,((t-b)/2,a)_{p}=((t+b)/2,a)_{p}=1 \}, \]
	where $ \lambda_{p}=2\cos\pi/p $.  Define $\phi: \mathcal{C}\mapsto \rho $: \begin{equation}\label{main}
		(a,b,t)\longmapsto \left\{ \begin{bmatrix}
			(t-b)/2&& a\\
			a&& (t+b)/2
		\end{bmatrix}\right\}_{\Gamma_{p}},\end{equation} 
	where, $ \{\gamma\}_{\Gamma_{p}}$ is the conjugacy class of $\gamma$ in $\Gamma_{p}$.

	\begin{cor}
		There is a map $\phi$ from $ \mathcal{C} $ to $\rho$ where every fiber has two elements.
	\end{cor}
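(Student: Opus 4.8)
The plan is to derive this as a specialization of \corref{main1}(1): for odd $p$ the family $\mathcal{C}$ is an economical re-encoding of $\mathcal{C}'$, and $\phi$ is the corresponding restriction of $\phi'$.

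First, I would compute the fixed-point ratio of the matrix attached to a quadruple. By \defref{fpr}, $\begin{bmatrix}(t-b)/2 & a\\ c & (t+b)/2\end{bmatrix}$ has fixed-point ratio $(c-a)/b$, so the clause $a=c$ in the definition of $\mathcal{C}'$ is exactly $\theta=0$, while $(c-a)/b=\cos\pi/p$ forces $\theta=\cos\pi/p$. Since $\cos\pi/p\neq 0$ for every $p\ge 3$, these two clauses are mutually exclusive. Now \thmref{thm} identifies the reciprocal hyperbolic classes for odd $p$ as exactly those with $\theta=0$; hence every quadruple of $\mathcal{C}'$ whose $\phi'$-image lies in $\rho$ must satisfy $a=c$. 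I would also discard the degenerate quadruples with $b=0$ (where the diagonal entries coincide and \defref{fpr} assigns ratio $1$, not $0$): by \thmref{thm} ratio-$1$ elements are not reciprocal when $p$ is odd, so they contribute nothing to $\rho$. Thus, for odd $p$, every element of $\mathcal{C}$ records a genuine symmetric (ratio-zero) class.

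Next I would set up the bijection $(a,b,t)\mapsto(a,b,a,t)$ from $\mathcal{C}$ onto the locus $\{c=a\}$ of $\mathcal{C}'$. Under $c=a$ the relation $d=4ac+b^{2}$ becomes $d=4a^{2}+b^{2}$, the relation $t^{2}-d=4$ is unchanged, the sign constraints $a,c,t>0$ become $a,t>0$, and the symbol conditions $((t-b)/2,c)_{p}=((t+b)/2,a)_{p}=1$ become $((t-b)/2,a)_{p}=((t+b)/2,a)_{p}=1$; these are precisely the defining relations of $\mathcal{C}$. Substituting $c=a$ into the matrix defining $\phi'$ yields exactly the matrix defining $\phi$, so under this bijection $\phi$ is identified with $\phi'$. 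In particular the fiber of $\phi$ over any class $\{M\}\in\rho$ is carried bijectively onto the fiber of $\phi'$ over $\{M\}$.

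Finally I would invoke \corref{main1}(1): for odd $p$ each fiber of $\phi'$ has exactly two elements, whence the same holds for $\phi$. As an independent check, the count of two can also be read from \thmref{sym}(1): a reciprocal class contains four symmetric group elements, and these pair up into two canonical-form representatives once the normalizations $a>0$ and $t>0$ are imposed. The step demanding the most care is the collapse of the disjunction defining $\mathcal{C}'$: one must be certain that, for odd $p$, no quadruple with $(c-a)/b=\cos\pi/p$ feeds a class of $\rho$ and that $\phi$ stays surjective onto $\rho$. Both points rest on \thmref{thm}, which pins the reciprocal hyperbolic classes of odd $p$ to fixed-point ratio zero and thereby ensures that symmetric lifts exhaust $\rho$.
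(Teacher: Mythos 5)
Your proposal is correct and takes essentially the same route as the paper: the paper gives no separate argument for this corollary, obtaining it—exactly as you do—by viewing $\mathcal{C}$ as the $c=a$ locus of $\mathcal{C}^{\prime}$ and invoking \corref{main1}(1), whose own proof is the count of four symmetric elements per class from \thmref{sym}(1) cut in half by the sign normalizations $a,t>0$. One small caveat: your justification for discarding the $b=0$ tuples is backwards, since a ratio-$1$ element is symmetric and hence reciprocal by \lemref{hyp} whenever it exists—the real point (left implicit in the paper, and consistent with \thmref{thm}) is that no such element occurs in $\Gamma_{p}$ for odd $p$, and even if one did it would simply be absorbed into the symmetric count, so this does not affect your argument.
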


As it is known, there are several ways to understand the reciprocity in the modular group. We expect that the same should be true for the Hecke groups as well, and there should be several other methods to understand reciprocity in the Hecke groups. 

\medskip Finally consider the family of Fuchsian groups $\Lambda_{p, q}$, where $ p, q>0$ are distinct integers such that  $1/p+1/q <1/2$.    The group $\Lambda_{p, q}$ is  generated by the maps
 $$\eta_p:  z\longmapsto -\dfrac{1}{z+2\cos \pi/p},  \hbox{ and  }~  \eta_q: z\longmapsto -\dfrac{1}{z+2\cos\pi/q}.$$
The group $\Lambda_{p, q}$ is finitely generated and may be identified with $ \z_{p} \ast \z_{q} $.   Following the proof of \thmref{thm}, it is easy to classify the reciprocal elements in $\Lambda_{p, q}$. We observe  the following in this regard. 
\begin{cor}
If both $p$ and $q$ are odd numbers, then there is no reciprocal element in $\Lambda_{p, q}$.   

If both of $p$ and $q$ are not odd,  then for a element $g$ in $\Lambda_{p, q}$ the following are equivalent.  
\begin{enumerate}
\item $g$ is reciprocal.

\item $g$ is either an involution or conjugate to a hyperbolic element $h$ such that 
$$\theta_{h}=\begin{cases}
		 1 \hbox{ or, } \cos \pi/p ~~~~~~~~\hbox{ if $p$ is even; }\\
		1 \hbox{ or,  } \cos \pi/q ~~~~~~~~\hbox{ if $q$ is even; }\\
		1 \hbox{ or, } \cos \pi/p  \hbox{ or,  } \cos \pi/q ~~~~~~~~\hbox{ if both $p$ and $q$  are even}. 
	\end{cases}
	$$ 

\end{enumerate} 
\end{cor}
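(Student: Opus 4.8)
The plan is to follow the proof of \thmref{thm}, using \thmref{fuchth} to reduce the whole problem to an analysis of the involutions of $\Lambda_{p,q}$. Since $\Lambda_{p,q}\cong\z_p\ast\z_q$, every element of finite order is conjugate into one of the free factors $\langle\eta_p\rangle\cong\z_p$ or $\langle\eta_q\rangle\cong\z_q$. Consequently an involution exists in $\Lambda_{p,q}$ if and only if one of $\z_p,\z_q$ contains an element of order two, i.e. if and only if $p$ or $q$ is even; and when they exist, every involution is conjugate to $\eta_p^{m}$ (when $p=2m$) or to $\eta_q^{n}$ (when $q=2n$). This structural fact is the only input I would need beyond \thmref{fuchth}.

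For the first assertion, suppose $p$ and $q$ are both odd; then the remark above shows $\Lambda_{p,q}$ has no involution at all. By \thmref{fuchth}(2) a reciprocal element is either an involution or hyperbolic, and by \thmref{fuchth}(1) every reciprocal element admits an involution as a reciprocator. Since no involution exists, neither possibility can occur, so $\Lambda_{p,q}$ has no non-trivial reciprocal element.

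For the second assertion I would prove $(1)\Leftrightarrow(2)$ by computing, for each admissible involution, precisely which hyperbolic elements it reverses. The computation I expect to run is as follows. A reciprocator $s$ is an elliptic half-turn fixing a single point $w=x_0+iy_0\in\h$ and acting on $\partial\h=\R\cup\{\infty\}$ as $z\mapsto\frac{x_0 z-(x_0^2+y_0^2)}{z-x_0}$ (the normalised half-turn matrix $\begin{bmatrix} x_0 & -(x_0^2+y_0^2)\\ 1 & -x_0\end{bmatrix}$ has trace zero). The element $\eta_p$ is elliptic of order $p$ with fixed point $w_p=\tfrac{-\lambda_p+i\sqrt{4-\lambda_p^2}}{2}$, where $\lambda_p=2\cos\pi/p$; when $p=2m$ the half-turn $\eta_p^{m}$ is therefore centred at $w_p$, and a short calculation gives $x_0=-\cos\pi/p$, $y_0=\sin\pi/p$, hence $x_0^2+y_0^2=1$ and boundary action $z\mapsto\frac{-\cos(\pi/p)\,z-1}{z+\cos\pi/p}$. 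If $s$ reverses a hyperbolic $g$ then $s$ must interchange the two boundary fixed points $u_1,u_2$ of $g$ (an involution fixes no boundary point), and conversely interchanging the fixed points forces $sgs^{-1}=g^{-1}$. Substituting $u_2=\eta_p^{m}(u_1)$ into $\theta_g=-\frac{1+u_1u_2}{u_1+u_2}$ collapses, after simplification, to the constant $\theta_g=\cos\pi/p$ for every non-degenerate pair, while the single degenerate pair $\{1,-1\}$ (on which the ratio is $0/0$ and is defined to equal $1$) is exactly the pair swapped by $\eta_p^{m}$, since $\eta_p^{m}(1)=-1$. The same computation with $q=2n$ yields the values $\cos\pi/q$ and $1$.

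With this correspondence in hand both implications are immediate. For $(2)\Rightarrow(1)$: an involution is reciprocal trivially, and a hyperbolic $h\in\Lambda_{p,q}$ with $\theta_h$ equal to $\cos\pi/p$, $\cos\pi/q$, or $1$ has its fixed points swapped by the corresponding available involution $\eta_p^{m}$ or $\eta_q^{n}$, hence is reversed by it. For $(1)\Rightarrow(2)$: a reciprocal hyperbolic $g$ has an involution reciprocator $s$, necessarily conjugate to $\eta_p^{m}$ or $\eta_q^{n}$; writing $s=\phi\,\eta_p^{m}\,\phi^{-1}$, the conjugate $h=\phi^{-1}g\phi$ is reversed by $\eta_p^{m}$, so $\theta_h\in\{\cos\pi/p,\,1\}$ (and respectively $\{\cos\pi/q,\,1\}$), which produces exactly the three listed cases according to the parities of $p$ and $q$. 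The point requiring genuine care — and the main obstacle — is the degenerate locus of the fixed-point ratio: one must verify separately that the symmetric pair $\{1,-1\}$, where the defining formula is indeterminate, is genuinely interchanged by $\eta_p^{m}$ (and by $\eta_q^{n}$), so that the value $\theta=1$ really does occur and must be recorded as a case distinct from $\cos\pi/p$ and $\cos\pi/q$.
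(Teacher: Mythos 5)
Your proposal is correct, and it actually supplies more detail than the paper does: the paper gives no separate proof of this corollary, saying only that it follows by the method of \thmref{thm}. Your skeleton is the intended one --- use \thmref{fuchth} to reduce everything to classifying the involutions of $\Lambda_{p,q}$ and then determine which hyperbolic elements each involution can reverse --- but you carry out both sub-steps by different means. Where the paper's treatment of $\Gamma_p$ identifies the possible reciprocators through a hands-on analysis of cyclically reduced words (\lemref{ext}, relying on the normal forms of \cite{HR} and the conjugacy theorem in \cite{MKD}), you invoke the general fact that every finite-order element of a free product is conjugate into a free factor, so every involution of $\Lambda_{p,q}\cong \z_p\ast\z_q$ is conjugate to $\eta_p^{m}$ or $\eta_q^{n}$; this is cleaner and avoids redoing the word combinatorics for two factors of composite order (you should still cite \cite{MKD} for this torsion fact rather than assert it). Where the paper pins down the admissible fixed-point ratios by matrix computations (\corref{le2} for $\iota$, \lemref{le1} for $\gamma_p^m$, with \lemref{co1} furnishing the converse), you argue geometrically: an elliptic involution reverses a hyperbolic element if and only if it swaps its two boundary fixed points, and substituting $u_2=\eta_p^{m}(u_1)$ into $\theta=-(1+u_1u_2)/(u_1+u_2)$ collapses to $\cos\pi/p$, with the exceptional orbit $\{1,-1\}$ accounting for the value $\theta=1$; your computation also implicitly rules out the remaining degenerate value $\theta=\infty$, since $\eta_p^m$ can swap a symmetric pair $\{u,-u\}$ only when $u^2=1$. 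The geometric route handles both implications at once and makes the $\theta=1$ case transparent, while the paper's matrix route has the advantage of producing the explicit lifts (matching your half-turn about $-\cos\pi/p+i\sin\pi/p$ with \lemref{B}) in $\z[\lambda_p]$ that the paper later needs for the parametrization of the reciprocal classes.
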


\subsubsection*{Structure of the paper} In \secref{prel},  some basic notions are recalled.   \thmref{fuchth} has been proved in \secref{thm1}. The fixed-point ratio and its relationship with reciprocity has been noted in \secref{fpr}.  Proof of \thmref{thm} has been splitted into two sections.   \secref{odd} contains the proof for $p$ odd, and  \secref{even} contains the proof for $p$ even.   \secref{param} is devoted to the proof of \thmref{sym}.  Proof of the subsequent corollary has also been given in this section. 
	
	\subsection*{Acknowledgment} 
	Das acknowledges full support from a UGC-NFSC research fellowship during the course of this work. 
	Gongopadhyay acknowledges partial support from the SERB core research grant CRG/2022/003680. 
	
	\section{Preliminaries} \label{prel}
	Throughout the following, we consider the upper-half space model of the hyperbolic plane $\h^2$. The boundary $\partial \h^2$ may be identified with the circle $\R \cup \{\infty\}$. The group ${\rm SL}(2, \R)$ acts on $\h^2$ by the M\"obius transformations:
	$$\begin{bmatrix} a & b \\ c & d \end{bmatrix}: z \mapsto \frac{az+b}{cz+d}.$$
	The isometry group of $\h^2$ can be identified with ${\rm PSL}(2, \R)$.  For an element $g$ in ${\rm PSL}(2, \R)$, we denote by $\tilde g$ as one of the lifts of $g$ in ${\rm SL}(2, \R)$. And a \textit{symmetric} element in $ \rm PSL(2,\R) $ is an element that is presented by a symmetric matrix in $ \rm SL(2,\R) $. For basic information on Fuchsian groups, we refer to the text \cite{Ka}. 
	
	\medskip Recall that every isometry $g$ of $\h^2$ has a fixed point on $\h^2 \cup \partial \h^2$.  The isometry $g$ is \emph{elliptic} if it has a fixed point on $\h^2$, equivalently if $|{\hbox{trace}}(\tilde g)|<2$; it is \emph{parabolic} if $g$ has a unique fixed point on $\partial \h^2$, equivalently if $|{\hbox{trace}}(\tilde g)|=2$; $g$ is \emph{hyperbolic} if it has exactly two fixed points on $\partial \h^2$, equivalently, if  $|{\hbox{trace}}(\tilde g)|>2$. 
	
	A Fuchsian group is elementary if it has a finite orbit in $\h^2 \cup \partial \h^2$. We note the following result that will be used later. 
	
	\begin{theorem}\label{Ka1}\cite[Theorem 2.4.3]{Ka}
		Any elementary Fuchsian group is either cyclic or is conjugate in ${\rm PSL}(2, \R)$ to a group generated by $g(z)=kz$, $k>1$, and $h(z)=-1/z$. 
	\end{theorem}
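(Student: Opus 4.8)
The plan is to classify elementary Fuchsian groups by exploiting the finiteness of an orbit in $\h^2 \cup \partial \h^2$, and then analyzing the possible geometric configurations that such a finite orbit forces on the group. First I would let $\Gamma$ be a non-trivial elementary Fuchsian group, so by definition there is a point $x \in \h^2 \cup \partial \h^2$ whose orbit $\Gamma x$ is finite. Since $\Gamma$ is discrete, the stabilizer of any point is relevant; the key dichotomy is whether $\Gamma$ fixes a point of $\h^2$, a single point of $\partial \h^2$, or a two-point subset of $\partial \h^2$. I would argue that discreteness rules out a parabolic-type accumulation unless the group is cyclic: a discrete group fixing a single boundary point must consist of parabolics (or parabolics and the identity) sharing that fixed point, and such a group is infinite cyclic generated by a single parabolic.

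Next I would handle the case where $\Gamma$ fixes a point $p \in \h^2$. After conjugating so that $p$ is the standard center, every element of $\Gamma$ is a rotation (elliptic) about $p$, so $\Gamma$ embeds in the rotation group $\mathrm{SO}(2)$, which is abelian; discreteness then forces $\Gamma$ to be a finite cyclic group. Thus in this case $\Gamma$ is cyclic. The remaining and most delicate case is when $\Gamma$ preserves a two-element subset $\{u_1, u_2\} \subset \partial \h^2$ but fixes no point of $\h^2$. Conjugating in ${\rm PSL}(2,\R)$ so that $\{u_1, u_2\} = \{0, \infty\}$, the subgroup $\Gamma_0$ of $\Gamma$ fixing both points individually consists of hyperbolic elements $z \mapsto k z$ with a common axis, hence is infinite cyclic generated by some $g(z) = kz$ with $k > 1$ (again using discreteness to produce a smallest dilation). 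The elements of $\Gamma$ that swap $0$ and $\infty$ form the non-trivial coset, and any such element is conjugate to (or can be normalized to) the involution $h(z) = -1/z$; composing with $\Gamma_0$ shows $\Gamma$ is generated by $g$ and $h$, giving exactly the stated form.

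The step I expect to be the main obstacle is verifying the two-point case cleanly: one must show that the coset of $\Gamma$ swapping $0$ and $\infty$, if non-empty, can be represented precisely by $h(z) = -1/z$ after conjugation, and that no element fixes an interior point of $\h^2$ in this configuration. The subtlety is that an element swapping $0$ and $\infty$ has the form $z \mapsto -c/z$ for some $c > 0$; I would conjugate by a dilation to normalize $c = 1$, being careful that this normalization is compatible with the already-fixed form $g(z) = kz$ of the orientation-preserving part, and then check that the resulting group is exactly $\langle g, h \rangle$. Throughout, I would invoke discreteness to guarantee that each relevant cyclic subgroup is generated by a single element (the smallest rotation angle, or the smallest dilation factor $k > 1$), which is what converts the abstract finiteness of an orbit into the concrete generators in the statement.
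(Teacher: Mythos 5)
The paper itself contains no proof of this statement: it is quoted directly from Katok \cite[Theorem 2.4.3]{Ka} as a known result. So the only meaningful benchmark is the standard textbook argument, and your sketch does follow that argument in outline (fixed-point trichotomy; discrete subgroups of the rotation group and of the dilation group $\{z\mapsto kz\}$ are cyclic; normalization of a swapping element to $h(z)=-1/z$ by a dilation, which you correctly observe is compatible with the form $g(z)=kz$). However, two steps are genuinely incomplete, and one of them is false as written.

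First, the trichotomy itself is asserted rather than derived. ``Elementary'' gives you only a finite orbit in $\h^2\cup\partial\h^2$; to reach ``$\Gamma$ fixes a point of $\h^2$, or fixes one boundary point, or preserves a boundary pair'' you need two reductions you never mention: for a finite orbit in $\h^2$ with several points, no orbit point need be fixed, and one passes to the barycenter (or circumcenter) of the orbit, a canonical point of $\h^2$ fixed by all of $\Gamma$; and for a finite boundary orbit with at least three points, the kernel of the permutation action is trivial (an isometry fixing three boundary points is the identity), so $\Gamma$ is finite and one is back in the elliptic case.

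Second, your claim that ``a discrete group fixing a single boundary point must consist of parabolics (or parabolics and the identity)'' fails as stated: $\langle z\mapsto 2z\rangle$ is discrete and fixes $\infty$, and since your three cases are not made mutually exclusive, nothing in your setup rules such elements out of this case. The missing ingredient is the standard non-discreteness lemma: if $g(z)=az$ with $a>1$ and $h(z)=bz+c$ with $c\neq 0$ both fix $\infty$ but do not share their full fixed-point set, then $g^{-n}hg^{n}(z)=bz+ca^{-n}$ is an infinite sequence of distinct elements converging in ${\rm PSL}(2,\R)$, contradicting discreteness. With this lemma, all elements of the stabilizer of a boundary point in a discrete group share the same full fixed-point set, so the group is either all-parabolic (conjugate to a discrete translation group, hence infinite cyclic) or falls into your two-point case. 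Your handling of the two-point case, including generation of $\Gamma$ by $g$ and $h$ from the index-two coset structure, is correct once these repairs are made.
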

	
	\begin{lemma}
		Let $g$ be an element in ${\rm PSL}(2, \R)$. Let $g^{-1}=hgh^{-1}$ for some $h$ in ${\rm PSL}(2, \R)$. Then $h$ keeps the fixed point set $F$ of $g$ invariant, i.e. $h(F)=F$. 
	\end{lemma}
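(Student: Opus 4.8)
The plan is to reduce everything to two elementary facts about fixed-point sets under the M\"obius action on $\h^2 \cup \partial \h^2$. The first fact is that $g$ and $g^{-1}$ share the same fixed-point set. Indeed, if $x$ is a point of $\h^2 \cup \partial \h^2$ with $g(x)=x$, then applying $g^{-1}$ to both sides gives $x=g^{-1}(x)$, and symmetrically; hence the fixed-point set of $g^{-1}$ equals $F$, the fixed-point set of $g$.

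The second fact is the standard conjugation identity for fixed points: for any $h$, the fixed-point set of $hgh^{-1}$ is exactly $h(F)$. To see this I would compute directly that $hgh^{-1}(y)=y$ is equivalent to $g(h^{-1}(y))=h^{-1}(y)$, i.e.\ to $h^{-1}(y)\in F$, i.e.\ to $y\in h(F)$. Thus the fixed-point set of the conjugate is the image $h(F)$.

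With these two observations in hand, the lemma follows immediately from the hypothesis $g^{-1}=hgh^{-1}$. Comparing fixed-point sets on both sides, the left-hand side has fixed-point set $F$ (by the first fact), while the right-hand side has fixed-point set $h(F)$ (by the second fact). Equating the two yields $h(F)=F$, which is precisely the assertion.

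I do not anticipate any genuine obstacle here, as the argument is purely formal and uses only that the fixed-point set transforms naturally under conjugation together with the coincidence of the fixed points of $g$ and $g^{-1}$. The only point worth stating carefully is that the fixed-point set is taken inside $\h^2 \cup \partial \h^2$, so that the conclusion $h(F)=F$ holds as an equality of subsets of the full compactification and therefore applies equally whether $g$ is elliptic, parabolic, or hyperbolic.
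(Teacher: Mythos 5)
Your proposal is correct and follows essentially the same route as the paper: both arguments are the purely formal conjugation computation, since the paper's one-line proof (take $x\in F$, compute $hgh^{-1}(h(x))=g^{-1}(h(x))$, deduce $g(h(x))=h(x)$) is exactly your second fact applied pointwise, combined implicitly with your first fact that $g$ and $g^{-1}$ have the same fixed points. If anything, your version is slightly tidier on one detail: the paper's computation as written only establishes the inclusion $h(F)\subseteq F$ (equality then follows by applying the same argument to $h^{-1}$, or by finiteness of $F$ and injectivity of $h$), whereas equating the fixed-point set of $g^{-1}$, which is $F$, with that of $hgh^{-1}$, which is $h(F)$, gives the stated equality $h(F)=F$ in one step.
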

	\begin{proof} 
		Let $x \in F$. Then $hgh^{-1}(h(x))=g^{-1}(h(x)) \Rightarrow g(h(x))=h(x)$. Thus $h(x)$ is also a fixed point of $g$. 
	\end{proof}
	\subsection{ Hecke Groups }
	Let $ p\geq 3 $ be an integer, the Hecke group $ \Gamma_{p} $ is generated by $\iota$ and $\alpha_{p}$. The lifts of $ \iota $ and $\alpha_{p}$ in $\rm SL(2,\R) $ are $S= \begin{bmatrix}
		0&&-1\\
		1&&0
	\end{bmatrix} $ and $T= \begin{bmatrix}
		1&&\lambda_{p}\\
		0&&1
	\end{bmatrix} $ resp. where $\lambda_{p}=2\cos\pi/p$.
	So, we can think Hecke group as a subgroup of $ \rm PSL(2,\z[\lambda_{p}]) $. This group, however, is not itself a discrete group for $ p\geq 4. $ It is possible to know when an element of $ \rm PSL(2,\z[\lambda_{p}]) $ belongs to $\Gamma_{p}$ with the help of the paper \cite{LL}.
	\subsubsection{Pseudo Euclidean algorithm}\cite{LL}
	Let $ a,b~ (\neq 0)\in \z[\lambda_{p}] $. Then there exists an integer $ n$ such that $ a=b(n\lambda_{p})+r   $ where $ -|b\lambda_{p}|/2 \leq r \leq  |b\lambda_{p}|/2$. Now repeat the procedure as follows.
\[a=b(n_{0}\lambda_{p})+r_{1},\]
	\[b=r_{1}(n_{1}\lambda_{p})+r_{2},\]
	\[ \vdots \] 
	\[r_{k-1}=r_{k}(n_{2}\lambda_{p})+r_{k+1},\]
	\[\vdots\]
	\[r_{m-1}=r_{m}(n_{2}\lambda_{p})+r_{m+1}.\]
	If this algorithm terminates at the $ m^{th} $-step, that is $ r_{m+1}=0 $ for some $m \in \mathbb{N} $,
	then, $ (a,b)_{p}:=|r_{m}|. $
	Also,
	\begin{enumerate}
		\item $  (a,b)_{p}=(b,a)_{p} $.
		\item$  (a,b)_{p}= (-a,-b)_{p} = (-a,b)_{p}= (a,-b)_{p}$.	 \end{enumerate}
	\begin{pro} \cite{LL}\label{Li}
		Let $ \tilde A=\begin{bmatrix}
			a&&b\\c&&d
		\end{bmatrix} $ be a lift in $ \rm SL(2,\R) $ of an element $ A $ in $ \rm PSL(2,\z[\lambda_{p}])  $. Then $ A\in \Gamma_{p} $ if and only if $ (a,c)_{p}=(b,d)_{p}=1. $
		
	\end{pro}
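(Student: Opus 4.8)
The plan is to prove the two implications separately, using the row operations induced by left multiplication by the generators $S$ and $T$ of $\Gamma_p$ together with the invariance properties of the pseudo-gcd $(\cdot,\cdot)_p$.

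For the forward direction, I would first record how left multiplication by the generators acts on the columns of a lift. If $\tilde A=\begin{bmatrix} a & b\\ c& d\end{bmatrix}$, then $T^{\pm1}\tilde A$ adds $\pm\lambda_p$ times the second row to the first, so its columns are $(a\pm\lambda_p c,\,c)$ and $(b\pm\lambda_p d,\,d)$; while $S\tilde A=\begin{bmatrix} -c& -d\\ a& b\end{bmatrix}$ has columns $(-c,a)$ and $(-d,b)$. By the very definition of the pseudo-Euclidean algorithm, a division step $x\mapsto x-n\lambda_p y$ leaves $(x,y)_p$ unchanged, and by the symmetry rules $(1)$ and $(2)$ so do a swap and a sign change. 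Hence both $(a,c)_p$ and $(b,d)_p$ are invariant under left multiplication by $S$ and by $T^{\pm1}$. Since the identity has $(1,0)_p=(0,1)_p=1$, an induction on the word length of $A$ in the generators shows that every $A\in\Gamma_p$ satisfies $(a,c)_p=(b,d)_p=1$.

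For the converse, given $\tilde A$ with $(a,c)_p=(b,d)_p=1$ I would run the pseudo-Euclidean algorithm on the first column $(a,c)$ and realise each step by a left multiplication inside $\Gamma_p$: a division step $a=c(n\lambda_p)+r$ is achieved by $T^{-n}$, and the ensuing interchange of the two entries by $S$. Because $(a,c)_p=1$, the algorithm terminates, and after finitely many such multiplications we obtain $B\in\Gamma_p$ with $B\tilde A$ having first column $(\pm1,0)$. As $\det(B\tilde A)=1$, this forces $B\tilde A=\pm\begin{bmatrix}1 & x\\ 0& 1\end{bmatrix}$, i.e. $B\tilde A=\begin{bmatrix}1 & x\\ 0& 1\end{bmatrix}$ in $\mathrm{PSL}(2,\R)$ after absorbing the sign. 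The proof then reduces to showing $x\in\lambda_p\z$: for then $B\tilde A=T^{n}$ with $x=n\lambda_p$, and $A=B^{-1}T^{n}\in\Gamma_p$.

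The main obstacle is precisely this last step, namely extracting $x\in\lambda_p\z$ from the surviving hypothesis $(b,d)_p=1$. Since the second-column gcd is preserved throughout the reduction, we arrive at $(x,1)_p=1$, and I would try to conclude $x\in\lambda_p\z$ by reducing $x$ modulo $\lambda_p$ (writing $x=n\lambda_p+r$ with $|r|\le\lambda_p/2$, applying $T^{-n}$) and then ruling out $r\neq0$. Here the naive Euclidean intuition fails because $\z[\lambda_p]$ is dense in $\R$, so the decisiveness of this step rests on the exact remainder bound $|r|\le|b\lambda_p|/2$ built into the algorithm together with the claim that $(x,1)_p=1$ forces $r=0$; I expect this to require the careful termination analysis of the pseudo-Euclidean algorithm, which is the technical heart of the statement. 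As a sanity check, when $p=3$ one has $\lambda_3=1$ and $(\cdot,\cdot)_p=\gcd$, so $ad-bc=1$ makes both conditions automatic and the result recovers $\Gamma_3=\mathrm{PSL}(2,\z)$.
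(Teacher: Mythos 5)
The paper offers no proof of this proposition at all: it is quoted from \cite{LL}, and the sentence following the statement simply refers the reader to Proposition 3.7 of that paper. So the only question is whether your argument stands on its own. Your forward direction essentially does: left multiplication by $T^{\pm 1}$ replaces a column $(x,y)$ by $(x\pm\lambda_p y,\,y)$, and since any valid first division $x=y(k\lambda_p)+r$ with $|r|\le|y\lambda_p|/2$ yields a valid first division $x\pm\lambda_p y=y((k\pm1)\lambda_p)+r$ with the \emph{same} remainder, the two runs of the algorithm have identical tails, so $(x\pm\lambda_p y,y)_p=(x,y)_p$; multiplication by $S$ is a swap-and-sign move covered by rules (1) and (2); induction from the identity matrix then gives $(a,c)_p=(b,d)_p=1$ on all of $\Gamma_p$. (You should state the degenerate conventions $(1,0)_p=(0,1)_p=1$ needed for the base case, and note that the same-remainder observation also disposes of the ambiguity in the definition of $(\cdot,\cdot)_p$ when a remainder lies on the boundary of the allowed interval.)

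The genuine gap is the one you name yourself: you never prove that $(x,1)_p=1$ forces $x\in\lambda_p\z$, and without that the reduction of the converse to the matrix $\begin{bmatrix} 1 & x\\ 0 & 1\end{bmatrix}$ concludes nothing. But this step requires no delicate termination analysis, and the density of $\z[\lambda_p]$ in $\R$ is not the obstruction you fear; the remainder bound you already quoted does all the work. Run the algorithm on the pair $(x,1)$: the first division is $x=1\cdot(n_0\lambda_p)+r_1$ with $|r_1|\le\lambda_p/2<1$, because $\lambda_p=2\cos\pi/p<2$ for every $p\ge 3$. If $r_1\neq 0$, then every later remainder satisfies $|r_{k+1}|\le|r_k|\lambda_p/2<|r_k|<1$, so either the algorithm never terminates, in which case $(x,1)_p$ is undefined, or it terminates with value $|r_m|<1$; in both cases $(x,1)_p\neq 1$. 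Hence $(x,1)_p=1$ forces $r_1=0$, i.e.\ $x=n_0\lambda_p$, so $B\tilde A=\pm T^{n_0}$ and $A=B^{-1}T^{n_0}\in\Gamma_p$. With this short patch, together with the conventions flagged above, your proof is complete and self-contained---which is more than the paper itself supplies for this statement.
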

	For the proof of the theorem, we refer the proposition 3.7 from \cite{LL}. We use this theorem for the parametrization of the reciprocal classes.
	\section{Proof of \thmref{fuchth} }\label{thm1} 
	
	The \thmref{fuchth} will follow from the following lemmas. 
	
	%\textbf{Note:}Any isometry of $ \h^{2} $ can be uniquely extended to the isometry of $ \h^{2}\cup \partial \h^{2}$
	
	\begin{lemma} \label{pro2}
		The only possible reciprocal elements in a Fuchsian group are either the hyperbolic elements or the involutions. 
	\end{lemma}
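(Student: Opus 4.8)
The plan is to use the classification of isometries of $\h^2$ into elliptic, parabolic, and hyperbolic types, together with the key Lemma just proved (that any reciprocator $h$ of a reciprocal element $g$ preserves the fixed-point set $F$ of $g$). I would proceed by ruling out, one type at a time, every element that is neither hyperbolic nor an involution.

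First I would dispose of the parabolic case. A parabolic $g$ has a unique fixed point $x \in \partial\h^2$, so $F = \{x\}$ is a single point. Since $g^{-1} = hgh^{-1}$, the element $g^{-1}$ is also parabolic with the same fixed point, and by the Lemma $h$ must fix $x$. Thus $g$ and $h$ share the fixed point $x$, so $\langle g, h\rangle$ is an elementary Fuchsian group fixing $x$; by \thmref{Ka1} it is (conjugate to) either cyclic or a group generated by a dilation and $\iota$, but in either of those models the only parabolic-compatible possibility forces $h$ to commute with $g$ (the stabilizer of a single boundary point in such a group is abelian on its parabolic part), giving $g = g^{-1}$, i.e. $g^2 = 1$, which is impossible for a nontrivial parabolic. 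Hence no parabolic element is reciprocal.

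Next I would handle the elliptic case. An elliptic $g$ fixes a point $z_0 \in \h^2$; in a Fuchsian group such $g$ has finite order, so it is a rotation about $z_0$ by some angle $2\pi k/n$. The reciprocator $h$ preserves $F = \{z_0\}$ (the single interior fixed point), hence $h(z_0) = z_0$ and $h$ is a rotation about the same center. But rotations about a common center commute, so $hgh^{-1} = g$, forcing $g = g^{-1}$ and therefore $g^2 = 1$. Thus the only reciprocal elliptic elements are the involutions, exactly as the statement permits. The remaining type, hyperbolic, is explicitly allowed by the lemma, so nothing needs to be excluded there.

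The main obstacle is the parabolic case, because the fixed-point set is a single boundary point and one cannot immediately conclude that $h$ commutes with $g$; the argument must invoke the elementary-group structure (via \thmref{Ka1}) to show that the stabilizer of a boundary point in a Fuchsian group cannot contain an element conjugating $g$ to $g^{-1}$ without actually centralizing $g$. I would be careful to verify that in the normal form of \thmref{Ka1} a parabolic cannot be inverted by a boundary-point-fixing isometry within a discrete group. The elliptic case is comparatively routine once one notes that the interior fixed point is unique and preserved, forcing commutativity. Together these exclusions establish that reciprocal elements in a Fuchsian group are precisely the hyperbolic elements and the involutions.
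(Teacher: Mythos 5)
Your proof is correct, and it shares the paper's overall skeleton: the reciprocator $h$ preserves the singleton fixed-point set of an elliptic or parabolic $g$, commutativity of $h$ and $g$ is then forced, and $g=g^{-1}$ leaves only the involutions---admissible for elliptic elements, impossible for infinite-order parabolics. The genuine difference lies in how commutativity is forced. For elliptics the paper cites Katok's commutation theorem, while you use the fact that the stabilizer of an interior point of $\h^2$ in ${\rm PSL}(2,\R)$ is a rotation group, hence abelian; these are interchangeable. For parabolics, however, the paper only says the argument is ``very much similar,'' and filling that in along the paper's lines actually needs an extra discreteness fact: one must rule out that the reciprocator is a hyperbolic element sharing the boundary fixed point with $g$ (a hyperbolic and a parabolic element of a Fuchsian group cannot have a common fixed point), since otherwise the fixed-point sets of $g$ and $h$ need not coincide and the commutation theorem does not apply. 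Your route sidesteps this entirely: $\langle g,h\rangle$ fixes the boundary point, hence is elementary, and by \thmref{Ka1} it is either cyclic---hence abelian, giving the contradiction $g^{2}=1$---or conjugate to $\langle z\mapsto kz,\ z\mapsto -1/z\rangle$, which contains no parabolic at all, its elements being $z\mapsto k^{n}z$ and $z\mapsto -k^{n}/z$. This is the same device the paper itself uses later in \lemref{fuch}, and it makes your parabolic case more self-contained than the paper's terse version; the one loosely worded clause in your write-up (``abelian on its parabolic part'') is harmless, since the dichotomy you set up resolves the case correctly once the trivial verification that the normal-form group has no parabolics is recorded.
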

	\begin{proof}
		Let $\Gamma$ be a Fuchsian group. To prove this theorem it suffices to show that elliptic elements and parabolic elements are not reciprocal elements. So, we can divide it into two cases.
		
		\textbf{Case 1.} Assume that $\Gamma$ contains an elliptic element. If possible, suppose that an elliptic element $\gamma$ (other than an involution) is reciprocal in $\Gamma$. Then $\exists$ an element $ g \in \Gamma$ such that 
		\[g\gamma g^{-1}=\gamma^{-1}.\]
		Since the elliptic elements in ${\rm PSL}(2, \R)$ have a unique fixed-point in $ \mathbb{H}^{2}  $, 
		$\gamma$ and $\gamma^{-1}$ fix the same point  $x\in \mathbb{H}^{2}$. This implies that $g(x)=x$ and thus,  $ g $, $\gamma$ commutes (from \cite[Theorem 2.3.2]{Ka}). This implies $ \gamma=\gamma^{-1} $, so $\gamma$ is an involution. Then, the involutions are only finite order reciprocal elements in Fuchsian groups.\n
		
		\textbf{Case 2.} For the parabolic case, the proof is very much similar. But they are not finite order elements. Then, this is a contradiction. 
	\end{proof}
	\begin{defn}
		An element $ g $ of a group $ G $, is said to be a \textbf{primitive} element if there does not exist any $ b $ in $ G $ such that $ g=b^{k} $ for some $ k\in \z $ and $ |k|>1 $.
	\end{defn}
	
	In general, primitive elements may not exist in every group. However, for Fuchsian groups, it is necessary for hyperbolic elements to have primitive elements within the group; otherwise, the group would no longer be discrete.
	\begin{lemma}\label{new}
		Let $ g $ be a primitive hyperbolic element in the Fuchsian group $ \Gamma $. Then, $ g $ is reciprocal if and only if $ g^{n} $ is reciprocal for any $ n\in \z - \{0\} $. 
	\end{lemma}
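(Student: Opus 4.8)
The plan is to treat the two implications separately, the forward one being immediate and the converse carrying the real content. For the forward direction, suppose $g$ is reciprocal, say $hgh^{-1}=g^{-1}$ for some $h\in\Gamma$. Then for every $n\in\z-\{0\}$ one has $h g^{n} h^{-1}=(hgh^{-1})^{n}=g^{-n}=(g^{n})^{-1}$, so the very same $h$ reciprocates $g^{n}$. Thus $g^{n}$ is reciprocal for all such $n$, and in fact one reciprocator works simultaneously for all powers.

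For the converse I would begin from a reciprocator of some power, i.e. an $h\in\Gamma$ with $h g^{n} h^{-1}=g^{-n}$ for a fixed $n\neq 0$, and aim to show that this same $h$ reciprocates $g$ itself. Let $\{u_{1},u_{2}\}\subset\partial\h^{2}$ be the two fixed points of the hyperbolic element $g$; since $n\neq 0$, these are exactly the fixed points of $g^{n}$. Applying the invariance lemma of \secref{prel} (a reciprocator preserves the fixed-point set) to $g^{n}$ gives $h(\{u_{1},u_{2}\})=\{u_{1},u_{2}\}$. The crux is to rule out that $h$ fixes both points: if it did, then $h$ would be the identity or a hyperbolic element sharing the axis of $g$, hence would commute with $g^{n}$, forcing $g^{n}=g^{-n}$; this is impossible because a hyperbolic element has infinite order and $n\neq 0$. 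Therefore $h$ must interchange $u_{1}$ and $u_{2}$.

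With the axis reversed by $h$, I would conclude as follows. The element $hgh^{-1}$ is hyperbolic, has the same translation length as $g$ (a conjugacy invariant), and has fixed-point set $\{h(u_{1}),h(u_{2})\}=\{u_{1},u_{2}\}$, but with attracting and repelling points interchanged relative to $g$; since a hyperbolic isometry is determined by its ordered pair of fixed points and its translation length, this forces $hgh^{-1}=g^{-1}$, so $g$ is reciprocal with reciprocator $h$. Alternatively, and more in keeping with the tools of the preliminaries, one may consider the stabilizer $G_{0}\leq\Gamma$ of $\{u_{1},u_{2}\}$: it is an elementary Fuchsian group containing both the hyperbolic $g$ and the element $h$, and since $h$ does not commute with $g^{n}$ it is non-abelian, so by \thmref{Ka1} it is conjugate to the infinite dihedral group $\langle z\mapsto kz,\ z\mapsto -1/z\rangle$. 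In such a group every hyperbolic element is inverted by each order-two reflection, whence $g$ is again reciprocal.

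I would close with a remark that the primitivity hypothesis is not strictly needed for either implication; it is retained because the lemma is used to reduce reciprocity questions for arbitrary hyperbolic elements to those for primitive ones. The single delicate point is the dichotomy for $h$ on the fixed-point set: showing that $h$ must swap $u_{1}$ and $u_{2}$ (equivalently, that $G_{0}$ is non-abelian) is precisely what upgrades reciprocity of a power to reciprocity of $g$, and everything else is formal.
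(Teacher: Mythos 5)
Your proof is correct, but it follows a genuinely different route from the paper's. The paper applies the fixed-point argument to the conjugated element rather than to the conjugator: from $h^{-1}g^{n}h=g^{-n}$ it deduces $\mathrm{Fix}(h^{-1}gh)=\mathrm{Fix}(g)$, then invokes \cite[Theorem 2.3.5]{Ka} -- a discreteness result about Fuchsian groups -- together with primitivity of $g$ to conclude $h^{-1}gh=g^{k}$, and finally eliminates $|k|>1$ (it would contradict primitivity) and $k=1$ (it would force $g^{2n}=\mathrm{Id}$), leaving $k=-1$. You instead analyze how $h$ acts on the two fixed points of $g$: after ruling out that $h$ fixes both of them (that would make $h$ commute with $g^{n}$, again forcing $g^{2n}=\mathrm{Id}$), $h$ must swap them, and then the rigidity of hyperbolic isometries (an element of ${\rm PSL}(2,\R)$ is determined by its ordered pair of fixed points and its translation length) yields $hgh^{-1}=g^{-1}$ outright. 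Your argument uses neither the discreteness of $\Gamma$ nor the primitivity of $g$, so, as you correctly remark, it proves a stronger statement: for any hyperbolic element of any subgroup of ${\rm PSL}(2,\R)$, a reciprocator of $g^{n}$ is already a reciprocator of $g$ itself. What the paper's route buys is that it stays entirely inside the machinery it needs anyway (primitive elements and \cite{Ka}), delivering the conclusion in exactly the form used to reduce \lemref{fuch} to primitive elements; what your route buys is greater generality and the sharper information that the same conjugating element works for all powers. Your alternative ending via \thmref{Ka1} (the setwise stabilizer of the axis is a non-cyclic elementary subgroup, hence of infinite dihedral type, in which every hyperbolic element is inverted by each involution) is in fact closer in spirit to the paper's proof of \lemref{fuch} than to its proof of this lemma.
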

	\begin{proof}
		One part of this lemma is obvious. So, we will show the converse part only. Suppose, 
		$ g^{n} $ is a reciprocal element in $ \Gamma $ with $ n $ a non-zero integer. Then, $\exists$  $ h$ in $ 
		\Gamma $ such that \begin{equation}\label{7}
			h^{-1}g^{n}h=g^{-n}.
		\end{equation} This implies \[ (h^{-1}gh)^{n}=g^{-n}.\] We know that the fixed point set remains invariant under a change of power for the hyperbolic elements. Then, \[ Fix(h^{-1}gh)=Fix(h^{-1}g^{n}h)=Fix(g^{-n})=Fix(g).\]
		So, $ g $ and $ h^{-1}gh $ share the same fixed point set, and $ g $ is the primitive element. Then, from \cite[ Theorem 2.3.5]{Ka} we obtain that \[ h^{-1}gh=g^{k}\] where $k\in \z -\{0\} $.
		This implies $ g $ is not a primitive element. This would be a contradiction if we consider $ |k|>1 $. Therefore, $ k=1  $ or $ -1 $.
		Suppose $ k=1 $. Then, $ h $ lies in the centralizer of $ g $. Then $ g^{2n}=Id $ from \ref{7}, i.e., $ g $ is a finite order element. This implies $ k $ must be $ -1 $. This proves our lemma.
	\end{proof}
	
	\begin{lemma}\label{fuch}
		Let $\Gamma$ be a Fuchsian group. An element $g$ in $\Gamma$ is reciprocal if and only if it is strongly reciprocal. The reciprocators of a reciprocal element are involutions. 
	\end{lemma}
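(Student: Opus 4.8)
The plan is to lean on \lemref{pro2}, which already restricts reciprocal elements to the hyperbolic ones and the involutions, and then treat the two classes separately. The involution case is immediate: if $g^2=1$ then $g=g^{-1}=g\,g\,g^{-1}$, so $g$ is its own (involutive) reciprocator and is manifestly strongly reciprocal. For the equivalence ``reciprocal $\Leftrightarrow$ strongly reciprocal'' one direction is trivial, since an involutive reciprocator is in particular a reciprocator. Hence the whole lemma reduces to the single substantive claim: \emph{every} reciprocator of a hyperbolic reciprocal element is an involution. I would prove this directly, as it simultaneously yields strong reciprocity and the assertion about reciprocators.

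So let $g$ be hyperbolic with $hgh^{-1}=g^{-1}$. First I would record, using the lemma in \secref{prel} that a reciprocator preserves the fixed-point set of $g$, that $h$ maps $F=\{u_1,u_2\}$ to itself. The key geometric observation is that $h$ must \emph{interchange} $u_1$ and $u_2$ rather than fix each: if $h$ fixed both endpoints it would share its fixed-point set with $g$, hence (by the cyclic structure of the stabiliser of a geodesic in a Fuchsian group, as used in \lemref{new}) commute with $g$, forcing $g=g^{-1}$ and contradicting that $g$ has infinite order. Consequently $h$ preserves the axis of $g$ while reversing its orientation. The step I expect to be the crux is ruling out a ``glide'': an orientation-reversing isometry of a geodesic that is itself orientation-preserving on $\h^2$ restricts on the axis to a reflection $t\mapsto c-t$, which fixes the midpoint, so $h$ fixes a point of $\h^2$ and is therefore elliptic; since it reverses the axis it must be the half-turn about that point, i.e. an involution.

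To make this rigorous and free of case analysis I would conjugate in ${\rm PSL}(2,\R)$ so that $g$ becomes diagonal, with lift $\tilde g=\begin{bmatrix} \mu & 0 \\ 0 & \mu^{-1}\end{bmatrix}$, $\mu>1$ (conjugation preserves both reciprocity and the property of being an involution). Writing $\tilde h=\begin{bmatrix} a & b \\ c & d \end{bmatrix}$ and imposing $\tilde h\,\tilde g=\pm\,\tilde g^{-1}\tilde h$, the diagonal entries force $a=d=0$ in the $+$ sign and rule out the $-$ sign entirely; together with $\det\tilde h=1$ this gives $bc=-1$ and hence $\tilde h^{2}=\begin{bmatrix} bc & 0 \\ 0 & bc\end{bmatrix}=-I$, so $h^{2}=1$ in ${\rm PSL}(2,\R)$. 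This shows $h$ is an involution, which establishes that reciprocators of a hyperbolic reciprocal element are involutions and, combined with \lemref{pro2} and the involution case above, completes the equivalence with strong reciprocity.
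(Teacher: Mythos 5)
Your proof is correct, and its skeleton agrees with the paper's: restrict to hyperbolic elements (the involution case being immediate), observe that a reciprocator $h$ preserves $\mathrm{Fix}(g)=\{u_1,u_2\}$, and split according to whether $h$ fixes or interchanges the two endpoints. The difference is in how each case is closed. The paper first reduces to primitive $g$ via \lemref{new}, views $\langle g,h\rangle$ as an elementary subgroup, uses \thmref{Ka1} to handle the ``fixes both endpoints'' case (cyclicity forces $g$ to be an involution), and in the swapping case argues synthetically: $h$ has a fixed point in $\h^2$ off the axis endpoints, so $h^2$ fixes three points and is the identity. You instead kill the first case by the elementary fact that isometries of $\h^2$ sharing an axis commute (so $g=g^{-1}$, impossible for hyperbolic $g$) --- needing neither primitivity nor \thmref{Ka1} --- and settle the second case by conjugating $g$ to the diagonal form $\tilde g=\mathrm{diag}(\mu,\mu^{-1})$ and solving $\tilde h\tilde g=\pm\tilde g^{-1}\tilde h$, which forces $\tilde h$ to be anti-diagonal with $\tilde h^2=-I$. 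This buys a more elementary and self-contained argument, valid uniformly for every hyperbolic $g$ and every reciprocator, at the cost of choosing coordinates; the paper's argument is coordinate-free but leans on Katok's classification results. Two minor remarks: in your sign analysis it is the off-diagonal entries together with $\det\tilde h=1$ that rule out the $-$ sign (the diagonal entries give $a=d=0$ under either sign), not the diagonal entries alone; and your reduction, like the paper's own proof, quietly passes over the fact that for an elliptic involution $g$ the reciprocators form the centralizer of $g$, which in a Fuchsian group with higher-order elliptic elements need not consist of involutions --- a looseness inherited from the statement itself rather than a defect of your argument relative to the paper.
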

	\begin{proof}
		If $g \in \Gamma$ is strongly reciprocal, then clearly, it is reciprocal. So, we now prove the converse. \n
		{Without loss of generality (from \lemref{new}),  assume that $ g $ is a primitive element in $\Gamma$.}
		Then suppose $g$ is reciprocal. Then there exists an element $h \in \Gamma$ such that $hgh^{-1}=g^{-1}$. Then $h$ keeps the fixed point set of $g$ invariant. In other words,  $\langle g, h \rangle$ would be an elementary subgroup of $\Gamma$.  If $g$ and $h$ have the same fixed point set, then $\langle g, h \rangle$ must be cyclic, using \thmref{Ka1}. Hence $h$ would be a power of $g$. In this case, $g$ would be an involution. 
		
		Suppose $h$ interchanges the fixed points of $g$. Then $h$ has a fixed point which is distinct from the fixed points of $g$. This would imply that $h^2$ fixes three distinct points. This implies $h^2$ must be identity; hence, $h$ is an involution. Thus, this is proved.
	\end{proof}

	\section{The Fixed-point Ratio and Reciprocity } \label{fpr} 
	For a hyperbolic element $A$, let $\theta_A$ be the fixed-point ratio as in \defref{fpr}. 
	\begin{theorem}\label{theta}
		Let $A$ be a hyperbolic element in ${\rm PSL}(2, \R)$.  
		Then $\theta_A = \theta_{A^n}$ for all $n \in \mathbb Z-\{0\}$. In other words, $\theta_A$ is an invariant of the cyclic group generated by $A$. 
	\end{theorem}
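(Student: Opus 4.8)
My plan is to reduce the whole statement to a single linear-algebra observation describing how the entries controlling $\theta$ scale under taking powers. Fix a lift $\tilde A=\begin{bmatrix} a & b\\ c & d\end{bmatrix}\in \mathrm{SL}(2,\R)$ of $A$, set $t=\mathrm{tr}(\tilde A)=a+d$, and write $\tilde A=\tfrac{t}{2}I+N$, where $N=\tilde A-\tfrac{t}{2}I$ is the traceless part. Since $N$ is a traceless $2\times 2$ matrix, Cayley--Hamilton gives $N^{2}=-\det(N)\,I$, a scalar matrix; hence every power of $\tilde A$ lies in the two-dimensional commutative algebra $\R\cdot I\oplus \R\cdot N$, and I can write
\[ \tilde A^{\,n}=\alpha_n I+\beta_n N \]
for suitable scalars $\alpha_n,\beta_n\in\R$ depending on $n$, $t$ and $\det N$.

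Comparing entries, writing $\tilde A^{\,n}=\begin{bmatrix} a_n & b_n\\ c_n & d_n\end{bmatrix}$, the three quantities entering \defref{fpr} all scale by the same factor $\beta_n$:
\[ a_n-d_n=\beta_n\,(a-d),\qquad b_n=\beta_n\,b,\qquad c_n=\beta_n\,c. \]
The crucial point is that $\beta_n\neq 0$ whenever $A$ is hyperbolic and $n\neq 0$: if $\beta_n=0$ then $\tilde A^{\,n}=\alpha_n I$, which (using $\det\tilde A^{\,n}=1$) forces $\tilde A^{\,n}=\pm I$ and hence $A^{\,n}=\mathrm{id}$ in $\mathrm{PSL}(2,\R)$, contradicting the fact that a hyperbolic element has infinite order. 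This non-vanishing is the only genuine content of the argument, so I expect it to be the main (though short) obstacle; everything after it is bookkeeping across the cases of the definition.

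With $\beta_n\neq 0$ in hand, I would verify $\theta_{A^n}=\theta_A$ case by case, first noting that the choice of lift is irrelevant since replacing $\tilde A$ by $-\tilde A$ negates every entry and leaves each defining expression unchanged. In the generic case $a\neq d$ the scalings give
\[ \theta_{A^{n}}=\frac{b_n-c_n}{a_n-d_n}=\frac{\beta_n(b-c)}{\beta_n(a-d)}=\frac{b-c}{a-d}=\theta_A, \]
and the same identity covers the degenerate $c=0$ case with a fixed point at $\infty$. In the degenerate case $a=d$ the relation $a_n-d_n=\beta_n(a-d)=0$ shows $a_n=d_n$, so the degeneracy is preserved; if moreover $b\neq c$ then $b_n-c_n=\beta_n(b-c)\neq 0$ and $\theta_{A^n}=\infty=\theta_A$, while if $b=c$ then $b_n=c_n$ and, solving the fixed-point equation $c_nz^{2}+(d_n-a_n)z-b_n=0$, one gets $z^{2}=1$, so $A^{n}$ again fixes $\{-1,1\}$ and $\theta_{A^{n}}=1=\theta_A$.

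As an independent check that bypasses the decomposition, I would observe that for $n\neq 0$ the hyperbolic elements $A$ and $A^{n}$ have exactly the same pair of fixed points $\{u_1,u_2\}$ on $\partial\h^2$, and that in the non-degenerate cases $\theta_A=-\tfrac{1+u_1u_2}{u_1+u_2}$ is a symmetric function of that pair. This confirms the equality conceptually and explains why $\theta$ is an invariant of the cyclic group $\langle A\rangle$, exactly as asserted.
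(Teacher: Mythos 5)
Your proof is correct, and it reaches the statement by a genuinely different route than the paper. The paper's argument is fixed-point based: in each case of the definition it rewrites $\theta_A$ as a symmetric function of the fixed points of $A$ (namely $\theta_A=-\frac{1+u_1u_2}{u_1+u_2}$ in the non-degenerate cases, with separate treatments of $c=0$, of $a=d$, and of $a=d,\ b=c$), and then invokes the fact that $A^n$ has the same fixed-point set as $A$; your closing ``independent check'' is essentially the paper's whole proof. Your main argument is instead a lift-level computation: from $\tilde A=\tfrac{t}{2}I+N$ with $N$ traceless you get $\tilde A^{\,n}=\alpha_nI+\beta_nN$, so that $a_n-d_n$, $b_n$ and $c_n$ all scale by one common factor $\beta_n$, which is nonzero because hyperbolic elements have infinite order. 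This buys something the paper leaves implicit: it shows that $A^n$ falls into the \emph{same case} of the three-case definition as $A$ (the degeneracies $a=d$ and $b=c$ are preserved on the nose, not merely the fixed-point set), it makes independence of the choice of lift $\pm\tilde A$ obvious, and it yields the matrix-level statements of \corref{ttheta} and \lemref{syme} in one stroke, uniformly in all cases. What the paper's route buys is the conceptual explanation --- $\theta_A$ depends only on the axis of $A$ --- which is the viewpoint used later when reciprocators are analyzed as maps interchanging the fixed points. Two small points to patch in your write-up: to include \emph{negative} powers in the algebra $\R I\oplus\R N$, note that $\tilde A^{-1}=tI-\tilde A$ by Cayley--Hamilton; and in the case $a=d$, $b=c$ you need not solve the fixed-point equation at all (doing so would anyway require checking $c_n\neq0$), since $a_n=d_n$ and $b_n=c_n$ already place $A^n$ in the third case of the definition, where the value is $1$ by definition.
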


	\begin{proof}
Let $\tilde A$ be a lift of $A$ in ${\rm SL}(2, \R)$:  $ \tilde A =\begin{bmatrix}
			a&& b\\
			c&& d
		\end{bmatrix}$. 
		To prove this theorem, first, assume  $ c \neq 0$, then 
		$ \tilde A =\begin{bmatrix}
			a&&b\\
			c&&d
		\end{bmatrix} $ with the absolute value of the trace greater than 2 in $ \rm SL(2,\R) $. Then, $ Fix(A)=\{\dfrac{(a-d)\pm\sqrt{(a+d)^{2}-4}}{2c}\} =\{ u_{1},u_{2}\}$  and also  the equation for the fixed point \[ cz^{2}-(a-d)z-b=0.\] So, $ u_{1}+u_{2}=\dfrac{a-d}{c} $ and $ u_{1}u_{2}=\dfrac{-b}{c}. $ If $ a\neq d $ then, \[\theta_{A}=\dfrac{b-c}{a-d}=\dfrac{b/c-1}{(a-d)/c}\] 
		\[\imp \theta_{A}=-\dfrac{1+u_{1}u_{2}}{u_{1}+u_{2}}.\]
		So $ \theta_{A} $ will not change at the time of iterations.

\medskip  Now, consider $ c= 0 $ then $ Fix(A)=\{\infty, \dfrac{-b}{a-d}\} $ and $\theta_{A}=\dfrac{b}{a-d}$.  
 So, in that case also,  $\theta_{A}$ is invariant at the time of iterations.

		Now suppose $ a=d $ then, $ c $ must be non-zero, otherwise $A$ would be parabolic.  So,  we have $u_{1}+u_{2}=0$, and  $\theta_A=-\frac{1-u_1^2}{u_1+u_2}$, accordingly it is $\infty$.  Since $u_1$ and $u_2$ are also the fixed points of $ A^{n} $ for $ n\in \z -\{0\} $.  So, $ \theta_{A}=\theta_{A^{n}}$. 

In the remaining case, when $a=d$, $b=c$, the fixed points are $1$ and $-1$. We defined $\theta_A=1$. 
Since fixed points do not change under iteration, so $\theta_A$ remains unchanged. 
	\end{proof}\\
	\begin{cor}\label{ttheta}
		Let $ A =\begin{bmatrix}
			a&& b\\
			c&& d
		\end{bmatrix}$ be a matrix with $ |trace(A)|>2 $ in ${ \rm SL}(2,\R) $.  Then, $ \theta_{A}=\theta_{A^{n}} $ for all $ n\in \z -\{0\}$.
	\end{cor}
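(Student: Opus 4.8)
The plan is to obtain this as an immediate consequence of \thmref{theta}. The point is that for a matrix $A \in {\rm SL}(2,\R)$ the condition $|{\rm trace}(A)|>2$ is precisely the condition guaranteeing that the image $\bar A$ of $A$ in ${\rm PSL}(2,\R)$ is hyperbolic. So the corollary is nothing but the reformulation of the theorem at the level of ${\rm SL}(2,\R)$, and the only genuine point requiring care is that the fixed-point ratio of \defref{fpr}, although it is defined through a chosen lift, in fact descends to a well-defined invariant of $\bar A$ whose value coincides with the one read off from $A$ itself.

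First I would check this descent. The two lifts of $\bar A$ to ${\rm SL}(2,\R)$ are $A$ and $-A$, and in each branch of \defref{fpr} the relevant data is unchanged when $(a,b,c,d)$ is replaced by $(-a,-b,-c,-d)$: in the case $a\neq d$ one has $\frac{(-b)-(-c)}{(-a)-(-d)}=\frac{b-c}{a-d}$, while the conditions $a=d,\,b\neq c$ and $a=d,\,b=c$ together with the assigned values $\infty$ and $1$ are visibly sign-invariant. Hence $\theta_{-A}=\theta_A$, so $\theta$ is genuinely an invariant of $\bar A$ and $\theta_A=\theta_{\bar A}$. Since $\det A=1$ forces $\det A^n=1$, the matrix $A^n$ is itself a lift of $\bar A^{\,n}$, and the same reasoning gives $\theta_{A^n}=\theta_{\bar A^{\,n}}$.

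With this in place the conclusion is immediate: $\bar A$ is hyperbolic by the trace hypothesis, so \thmref{theta} applies and yields $\theta_{\bar A}=\theta_{\bar A^{\,n}}$ for every $n\in\z-\{0\}$; combining with the two identifications above gives $\theta_A=\theta_{A^n}$. Alternatively, one may bypass the projective picture entirely: the proof of \thmref{theta} proceeds by computing the fixed points and the quantity $-\frac{1+u_1 u_2}{u_1+u_2}$ directly from the entries $a,b,c,d$ of a lift, and this computation never uses projectivity in an essential way, so applying it verbatim to the matrix $A$ (which, having $|{\rm trace}(A)|>2$, is hyperbolic with the same pair of real fixed points) already delivers $\theta_A=\theta_{A^n}$. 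I do not expect any real obstacle here; the whole content is the bookkeeping observation that $\theta$ is insensitive to the sign of a lift, which is exactly what lets the ${\rm PSL}(2,\R)$ statement be transported to ${\rm SL}(2,\R)$.
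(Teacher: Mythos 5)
Your proposal is correct and takes essentially the same route as the paper: the paper's one-line proof also reduces the corollary to \thmref{theta} together with the observation that $\theta_{A}=\theta_{rA}$ for any real scalar $r$, of which your sign-invariance check $\theta_{-A}=\theta_{A}$ (verified branch by branch in \defref{fpr}) is exactly the relevant case for passing between lifts in ${\rm SL}(2,\R)$ and elements of ${\rm PSL}(2,\R)$. Your write-up merely spells out the descent bookkeeping that the paper leaves implicit.
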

	\begin{proof}
		The proof of \thmref{theta} follows from the following theorem and the fact that $\theta_A=\theta_{r A}$ for any real number $r$.  
	\end{proof}

	\begin{lemma}\label{co1}
		Let $A$ be a hyperbolic element in ${\rm PSL}(2, \R)$ such that $| \theta_{A}|=\eta < 1 $. Then $A$ is a reciprocal element with a reciprocator 
		$$ {\Phi}=\dfrac{1}{\sqrt{1-\theta_{A}^2}} \begin{bmatrix}
			\theta_{A} && 1\\
			-1&& -\theta_{A}
		\end{bmatrix}. $$
	\end{lemma}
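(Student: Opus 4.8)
The plan is to show that the displayed matrix $\Phi$ descends to an involution in ${\rm PSL}(2,\R)$ and that conjugation by it carries $A$ to $A^{-1}$; this simultaneously establishes that $A$ is reciprocal and exhibits $\Phi$ as an explicit (involutory) reciprocator. Write $\theta=\theta_{A}$ and $M=\begin{bmatrix}\theta & 1\\ -1 & -\theta\end{bmatrix}$, so that $\Phi=(1-\theta^{2})^{-1/2}M$. First I would record that $\det M=1-\theta^{2}$, which is strictly positive precisely because of the hypothesis $|\theta_{A}|=\eta<1$; this is exactly what makes the normalizing scalar a real number and $\Phi$ a genuine element of ${\rm SL}(2,\R)$ with $\det\Phi=1$. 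Since $M$ has zero trace, Cayley–Hamilton gives $M^{2}=-(\det M)\,I=-(1-\theta^{2})I$, whence $\Phi^{2}=-I$. Thus $\Phi$ is an involution in ${\rm PSL}(2,\R)$ and $\Phi^{-1}=-\Phi$ in ${\rm SL}(2,\R)$, consistent with \thmref{fuchth}.

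The core step is then the single verification $\Phi\tilde A\Phi^{-1}=\tilde A^{-1}$ for a lift $\tilde A=\begin{bmatrix}a & b\\ c & d\end{bmatrix}$. I would carry this out by forming the product $M\tilde A M$ and comparing it with $(\theta^{2}-1)\tilde A^{-1}=(\theta^{2}-1)\begin{bmatrix}d & -b\\ -c & a\end{bmatrix}$. Expanding, each of the four entries of $M\tilde A M$ collapses to one and the same polynomial requirement $\theta(a-d)=b-c$, that is $\theta=\frac{b-c}{a-d}$, which is exactly \defref{fpr}. Hence $M\tilde A M=(\theta^{2}-1)\tilde A^{-1}$, and multiplying by the scalars (using $\Phi^{-1}=-\Phi$) the factors $1-\theta^{2}$ cancel to give $\Phi\tilde A\Phi^{-1}=\tilde A^{-1}$ on the nose in ${\rm SL}(2,\R)$, hence $\Phi A\Phi^{-1}=A^{-1}$ in ${\rm PSL}(2,\R)$.

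Conceptually the reason this particular $\Phi$ works is geometric: the M\"obius map of $\Phi$ interchanges the two fixed points $u_{1},u_{2}$ of $A$. Indeed $\Phi(u_{1})=u_{2}$ is equivalent to $\theta(u_{1}+u_{2})+1+u_{1}u_{2}=0$, i.e.\ to the formula $\theta_{A}=-\frac{1+u_{1}u_{2}}{u_{1}+u_{2}}$ established in \thmref{theta}. An involution that swaps the two endpoints of the axis of a hyperbolic element reverses the orientation of that axis, so $\Phi A\Phi^{-1}$ has the same axis as $A$ but with attracting and repelling points interchanged; this forces $\Phi A\Phi^{-1}=A^{-1}$ rather than $A$. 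I expect the only care needed to be in the boundary configurations: the case $a=d$ cannot occur, since it yields $\theta_{A}\in\{1,\infty\}$, contradicting $|\theta_{A}|<1$, so $\frac{b-c}{a-d}$ is always legitimate; and the case $c=0$ (one fixed point at $\infty$) should be checked directly, where one confirms $\Phi(\infty)=-\theta=\frac{-b}{a-d}$ and $\Phi\!\left(\frac{-b}{a-d}\right)=\infty$. Apart from this bookkeeping there is no substantial obstacle—the entire content is the recognition that the defining identity for $\theta_{A}$ is precisely the fixed-point-swapping condition.
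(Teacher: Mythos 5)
Your proof is correct and takes essentially the same route as the paper's: the paper likewise records the defining identity $a\theta_{A}+c=b+d\theta_{A}$ (your $\theta_{A}(a-d)=b-c$) and then verifies $\Phi A\Phi^{-1}=A^{-1}$ in ${\rm PSL}(2,\R)$ by direct matrix computation. Your extra observations --- that $|\theta_{A}|<1$ makes $\det$ positive so $\Phi\in{\rm SL}(2,\R)$ with $\Phi^{2}=-I$, the exclusion of the degenerate case $a=d$, and the fixed-point-swapping interpretation --- merely spell out what the paper compresses into ``easy to see by direct computation.''
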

	\begin{proof}
		Let $ \tilde A=\begin{bmatrix}
			a&& b\\
			c&& d
		\end{bmatrix} $. Then \[ a\theta_{A} +c= b+ d\theta_{A}.\]  
		It is easy to see by direct computation that $\Phi A \Phi^{-1}=A^{-1}$ in $ \rm PSL(2,\R) $. This implies, $ A $ is reciprocal with reciprocator $ \Phi $ in $ \rm PSL(2,\R) $. 
	\end{proof}

\medskip We know that every hyperbolic element in $ \rm PSL(2,\R) $ is a product of two involutions. By the previous corollary, we can determine the involutions in the product which are given by 
$$ \dfrac{1}{\sqrt{1-\theta^{2}_{A}}}\begin{bmatrix}
		\theta_{A}&& 1\\
		-1 && -\theta_{A}
	\end{bmatrix} \hbox{  and  }\dfrac{1}{\sqrt{1-\theta^{2}_{A}}}\begin{bmatrix}
		\theta_{A}&& 1\\
		-1 && -\theta_{A}
	\end{bmatrix}A. $$
	
\medskip 
	Observe that for a hyperbolic element $A$ in ${\rm PSL}(2, \R)$ with $\theta_A \neq 1, \infty$, 
	{ \[ 1-\theta_{A}^{2}=1-(\dfrac{1+u_{1}u_{2}}{u_{1}+u_{2}})^{2} \]
		\[ =\dfrac{(u_{1}+u_{2})^{2}- (1+u_{1}u_{2})^{2}}{(u_{1}+u_{2})^{2}}\]
		\[=\dfrac{u_{1}^{2}+u_{2}^{2}-1-u_{1}^{2}u_{2}^{2}}{(u_{1}+u_{2})^{2}}\]
		\[=\dfrac{(u_{1}^{2}-1)-u_{2}^{2}(u_{1}^{2}-1)}{(u_{1}+u_{2})^{2}}\]
		\[=\dfrac{(u_{1}^{2}-1)(1-u_{2}^{2})}{(u_{1}+u_{2})^{2}}.\]
	Thus, for $ |\theta_{A}|<1 $,  one fixed point lie in the interval $ (-1,1) $ and the other lie outside $ (-1,1) $. 

	} 
	\medskip 
	In $ \rm PSL(2,\R) $, we have some hyperbolic elements which are reciprocal in $ \rm PSL(2,\R) $, but the invariant is greater than 1. For example, $B= \begin{bmatrix} 
		3&& 5\\
		1&& 2
	\end{bmatrix} $ gives a hyperbolic element in $ \rm PSL(2,\R) $ that is reciprocal but $ |\theta_{B}|>1 $. 

	For such $B$ whose $ |\theta_{B}|>1 $,  the fixed points are either inside of $ (-1,1) $ or both are outside $ (-1,1) $.   By conjugation,  we can move the fixed points such that one will lie inside $ (-1,1) $. Then, we can do the rest as we have done in \lemref{co1}.  In that case, the reciprocators are conjugate to an element of the form  $ \Phi $. 
	
\medskip 
If we consider $ \theta_{A}=1 $, i.e., $ a=d,c=b $ for the hyperbolic element $ A $, the hyperbolic axis $ I $ of $A$  is the semi-circle of (Euclidean) radius one.  We may obtain the reciprocator $\Phi$ as $ \dfrac{1}{\sin t}\begin{bmatrix}
		\cos t && 1\\ -1 && -\cos t 
	\end{bmatrix} $ where $ t\in \R $. The fixed points of the involution $\Phi$ lie on that axis $ I $. Then, $ \Phi $ interchanges the fixed points $ \{-1,1\} $. This implies that $\Phi$ will be a reciprocator for $ A $.   
	\begin{cor}
		Let $ f $ in $ \rm PSL(2,\R) $ be a hyperbolic element with fixed point set $ \{-1,1\} $. Then $ f $ is a strongly reciprocal element with reciprocator $ \mu $ such that the fixed points of $ \mu $ are of the form $ e^{it} $ where $ t\in \R $.
	\end{cor}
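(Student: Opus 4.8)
The plan is to lean on the geometric picture set up in the paragraph preceding the statement. The hypothesis $\mathrm{Fix}(f)=\{-1,1\}$ forces $f$ into the degenerate $\theta_f=1$ case, so \lemref{co1} cannot be applied directly: its reciprocator $\Phi$ carries a factor $1/\sqrt{1-\theta_A^2}$ that blows up here. Instead I would build an explicit involution by hand and verify the conjugation relation. First I would normalize: writing the fixed-point equation $cz^{2}-(a-d)z-b=0$ for a lift $\tilde f=\begin{bmatrix} a & b \\ c & d\end{bmatrix}$ and imposing that its roots are $\pm 1$ gives $a=d$ and $b=c$, so after rescaling $\tilde f=\begin{bmatrix} a & b \\ b & a\end{bmatrix}$ with $a^{2}-b^{2}=1$ and $|a|>1$ (hyperbolicity). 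The axis of $f$ is then the geodesic joining $-1$ to $1$, that is, the upper unit semicircle.

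Next I would exhibit the candidate family $\mu_t=\dfrac{1}{\sin t}\begin{bmatrix}\cos t & 1 \\ -1 & -\cos t\end{bmatrix}$ for $t\in(0,\pi)$ and check the three required properties. The underlying matrix has trace $0$ and, after normalizing by $1/\sin t$, determinant $1$; by Cayley--Hamilton it squares to $-I$, which is the identity in $\mathrm{PSL}(2,\R)$, so $\mu_t$ is an involution. Solving the fixed-point equation for $\mu_t$ yields $z^{2}+2(\cos t)z+1=0$, whose roots are $-\cos t\pm i\sin t=e^{i(\pi\mp t)}$; these lie on the unit circle, so the fixed points of $\mu_t$ are indeed of the form $e^{i\theta}$, and the one lying in $\h^2$ sits on the axis of $f$.

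Finally I would verify the conjugation relation. Setting $M=\sin t\,\mu_t$, a direct multiplication using $a=d$, $b=c$ and $\cos^{2}t-1=-\sin^{2}t$ collapses the cross terms and produces the clean identity $M\tilde f M=-\sin^{2}t\,\tilde f^{-1}$; since $M^{2}=-\sin^{2}t\,I$ gives $M^{-1}=-\tfrac{1}{\sin^{2}t}M$, this rearranges to $\mu_t\tilde f\mu_t^{-1}=\tilde f^{-1}$, hence $\mu_t f\mu_t^{-1}=f^{-1}$ in $\mathrm{PSL}(2,\R)$. Because $\mu_t$ is an involution, this simultaneously shows that $f$ is reciprocal and that it is strongly reciprocal, with reciprocator whose fixed points lie on the unit circle, as claimed.

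I do not expect a genuine obstacle, only careful bookkeeping in the conjugation computation; conceptually the content is the geometric fact that an order-two rotation centered at a point of the axis of $f$ swaps the two endpoints (one checks directly that $\mu_t(1)=-1$ and $\mu_t(-1)=1$) and thereby reverses the direction of translation along the axis, converting $f$ into $f^{-1}$. The one point that must be handled with care is precisely that this is the degenerate $\theta_f=1$ regime in which the generic formula of \lemref{co1} fails, so the explicit family $\mu_t$ is what guarantees a reciprocator exists and has its fixed points on the axis.
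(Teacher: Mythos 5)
Your proposal is correct and is essentially the paper's own argument: the paper's ``proof'' of this corollary is exactly the preceding discussion, which exhibits the same involution family $\mu_t=\dfrac{1}{\sin t}\begin{bmatrix}\cos t & 1\\ -1 & -\cos t\end{bmatrix}$ as the reciprocator, with fixed points $e^{i(\pi\mp t)}$ lying on the axis of $f$. The only difference is one of rigor: where the paper simply asserts that since $\mu_t$ interchanges $\{-1,1\}$ it must be a reciprocator, you verify $\mu_t f\mu_t^{-1}=f^{-1}$ by an explicit matrix computation, thereby filling in the orientation-reversal step that the paper leaves implicit.
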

	The proof follows from the above discussion. 
	\begin{cor} \label{cort}
		If $ A $ is conjugate to $ B $ by $\iota$ in $ \rm PSL(2,\R) $ with $ \theta_{A}\neq 1 $,  then $ \theta_{A}=-\theta_{B} $.
	\end{cor}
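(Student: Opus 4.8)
The plan is to reduce the statement to a single matrix computation, since by \defref{fpr} both $\theta_A$ and $\theta_B$ can be read off directly from the entries of a lift in $\mathrm{SL}(2,\R)$. Fix a lift $\tilde A=\begin{bmatrix} a & b\\ c & d\end{bmatrix}$ of $A$, and recall from \secref{prel} that the lift of $\iota$ is $S=\begin{bmatrix}0 & -1\\ 1 & 0\end{bmatrix}$. Because $B$ is conjugate to $A$ by $\iota$, a lift of $B$ is $\tilde B=S\tilde A S^{-1}$. Since changing the lift only rescales the matrix and $\theta_{rA}=\theta_A$ for every nonzero real $r$ (the fact recorded in the proof of \corref{ttheta}), the sign ambiguity in choosing $S$ and $S^{-1}$ in $\mathrm{SL}(2,\R)$ is harmless, so the conclusion will not depend on these choices.

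First I would carry out the product $S\tilde A S^{-1}$, which gives $\tilde B=\begin{bmatrix} d & -c\\ -b & a\end{bmatrix}$. Writing $\tilde B=\begin{bmatrix} a' & b'\\ c' & d'\end{bmatrix}$ with $a'=d$, $b'=-c$, $c'=-b$, $d'=a$, I then apply \defref{fpr}. In the generic case $a\neq d$ (equivalently $\theta_A\neq 1,\infty$) one also has $a'\neq d'$, and $\theta_B=\dfrac{b'-c'}{a'-d'}=\dfrac{b-c}{d-a}=-\dfrac{b-c}{a-d}=-\theta_A$, which is exactly the claim.

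It remains to treat the degenerate entries of \defref{fpr}, and this is the only place that needs care. When $a=d$ and $b\neq c$ we have $\theta_A=\infty$; the matrix $\tilde B$ then also satisfies $a'=d'$ and $b'\neq c'$, so $\theta_B=\infty$, which is consistent with $-\theta_A=\infty$ under the identification of $\infty$ with $-\infty$ on $\R\cup\{\infty\}$ (geometrically, $\iota$ sends the symmetric fixed-point pair $\{p,-p\}$ to the symmetric pair $\{-1/p,1/p\}$). The excluded case $a=d$, $b=c$, i.e.\ $\theta_A=1$, is genuinely different: there $\tilde B$ has $a'=d'$ and $b'=c'$, so $\theta_B=1=\theta_A\neq-\theta_A$, which explains precisely why the hypothesis $\theta_A\neq 1$ is imposed. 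The main obstacle is thus only the bookkeeping of these boundary cases so that the identity $\theta_B=-\theta_A$ holds uniformly; the underlying algebra is routine.
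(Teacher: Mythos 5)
Your proof is correct. The paper itself states \corref{cort} without an explicit proof, as a consequence of the preceding discussion: there the ratio is written in terms of the fixed points as $\theta_A=-\tfrac{1+u_1u_2}{u_1+u_2}$, and since $\iota$ sends each fixed point $u$ of $A$ to $-1/u$, substituting $-1/u_1,\,-1/u_2$ into that formula flips its sign. Your route replaces this by the explicit conjugation $S\tilde A S^{-1}=\begin{bmatrix} d & -c\\ -b & a\end{bmatrix}$ and reads both ratios off \defref{fpr} directly, which is arguably more self-contained: the fixed-point formula is only derived in the paper under genericity assumptions ($c\neq 0$, $a\neq d$), whereas your computation covers all entries of \defref{fpr} uniformly. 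In particular, you handle the two degenerate cases that the fixed-point argument glosses over: the case $a=d$, $b\neq c$, where $\theta_A=\theta_B=\infty$ and the identity holds under the natural identification $-\infty=\infty$ on $\R\cup\{\infty\}$, and the excluded case $a=d$, $b=c$, where your computation shows $\theta_B=\theta_A=1$, so the hypothesis $\theta_A\neq 1$ is genuinely necessary rather than an artifact of the method. Your remark that the sign ambiguity of lifts is harmless because $\theta_{r\tilde A}=\theta_{\tilde A}$ for $r\neq 0$ is also exactly the right way to see that the statement is well posed in ${\rm PSL}(2,\R)$.
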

	
	\begin{cor} \label{le2}
		Let $A$ be a reciprocal element in the Hecke group $\Gamma_p$ with reversing symmetry $\iota$. Then, $\theta_{A}=0$ or $ 1 $ i.e., every lift of $ A $ in $ \rm SL(2,\R) $ is a symmetric matrix.
	\end{cor}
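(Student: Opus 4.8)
The plan is to reduce the statement to a single computation in ${\rm SL}(2,\R)$. By \thmref{fuchth}, a reciprocal element of $\Gamma_p$ is either an involution or hyperbolic; since the fixed-point ratio $\theta_A$ of \defref{fpr} is defined only for hyperbolic elements, I take $A$ hyperbolic. The hypothesis that $\iota$ is a reversing symmetry means $\iota A \iota^{-1} = A^{-1}$ in ${\rm PSL}(2,\R)$, which lifts to
\[ S\,\tilde A\, S^{-1} = \epsilon\,\tilde A^{-1}, \qquad \epsilon=\pm1, \]
where $S=\begin{bmatrix} 0 & -1 \\ 1 & 0 \end{bmatrix}$ is the lift of $\iota$ and $\tilde A=\begin{bmatrix} a & b \\ c & d \end{bmatrix}$ with $ad-bc=1$.

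First I would perform the conjugation explicitly. A direct multiplication gives
\[ S\,\tilde A\,S^{-1}=\begin{bmatrix} d & -c \\ -b & a \end{bmatrix}, \qquad \tilde A^{-1}=\begin{bmatrix} d & -b \\ -c & a \end{bmatrix}, \]
and I compare entries for each sign. If $\epsilon=+1$, the off-diagonal entries force $b=c$, so $\tilde A$ is symmetric. If $\epsilon=-1$, the diagonal entries force $a=d=0$, whence ${\rm trace}(\tilde A)=0$ and $A$ is elliptic, contradicting hyperbolicity. Hence only $\epsilon=+1$ survives and every lift of $A$ is symmetric.

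It then remains to read off $\theta_A$ from \defref{fpr}. If $b=c$ and $a\neq d$, then $\theta_A=(b-c)/(a-d)=0$; if $b=c$ and $a=d$, we are exactly in the branch where $\theta_A=1$ by definition (and $b=c$ excludes the branch $\theta_A=\infty$, which requires $b\neq c$). Thus $\theta_A\in\{0,1\}$, and conversely either value forces $b=c$, giving the stated equivalence with symmetry. A cleaner conceptual route that avoids tracking $\epsilon$ is to note that $A^{-1}=\iota A\iota^{-1}$ exhibits $A$ and $A^{-1}$ as $\iota$-conjugate, so \corref{cort} yields $\theta_A=-\theta_{A^{-1}}$ whenever $\theta_A\neq1$, and then \thmref{theta} gives $\theta_{A^{-1}}=\theta_A$, forcing $2\theta_A=0$, i.e.\ $\theta_A=0$.

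The hard part is not the algebra but the bookkeeping in passing from ${\rm PSL}$ to ${\rm SL}$: the reversing relation determines the conjugate only up to the global sign $\epsilon$, so both signs must be examined and the case $\epsilon=-1$ must be ruled out by hyperbolicity. The degenerate diagonal case $a=d$ also needs a moment of care, since there $\theta_A$ is defined by a separate branch; the computation confirms that symmetry lands precisely in the $\theta_A=1$ branch and never produces $\theta_A=\infty$.
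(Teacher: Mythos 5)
Your proposal is correct. The paper gives no explicit proof of this corollary: it is placed immediately after \corref{cort}, and the intended derivation is exactly what you call the ``cleaner conceptual route'' --- from $\iota A\iota^{-1}=A^{-1}$, \corref{cort} gives $\theta_A=-\theta_{A^{-1}}$ whenever $\theta_A\neq 1$, while \thmref{theta} gives $\theta_{A^{-1}}=\theta_A$, forcing $\theta_A=0$. Your primary argument, the direct ${\rm SL}(2,\R)$ computation with the lift sign $\epsilon$, is a genuinely different and in fact more complete route: the \corref{cort}-based derivation does not by itself exclude the degenerate branch $\theta_A=\infty$ (where $a=d$, $b\neq c$), since $\infty$ and $-\infty$ coincide projectively, whereas your computation forces $b=c$ outright, which kills that branch and yields the symmetry of the lifts directly rather than inferring it from the value of $\theta_A$. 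This matters because your closing remark that ``either value forces $b=c$'' is not literally true in isolation: $\theta_A=1$ can also occur in the first branch of \defref{fpr}, with $a\neq d$ and $b-c=a-d$, for a non-symmetric matrix in ${\rm SL}(2,\R)$; it is only under the reversing-symmetry hypothesis --- that is, via your explicit computation --- that the conditions $\theta_A\in\{0,1\}$ and symmetry of the lifts coincide, so you should lean on the computation rather than on that converse. Finally, your restriction to hyperbolic $A$ via \thmref{fuchth} is the right reading of the statement, since for an involution such as $A=\iota$ the conclusion about symmetric lifts fails and $\theta_A$ is undefined.
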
 
	Converse is also true, and that follows from \lemref{hyp}.
	
	Finally, we note the following lemma. 
	
	\begin{lemma}\label{syme}
		Let $ A $ be a hyperbolic element in $ {\rm PSL}(2,\R) $ then, $ \tilde{A}^{k} $ is symmetric if and only if $ \tilde{A} $ is symmetric for any non-zero integer $ k $.
	\end{lemma}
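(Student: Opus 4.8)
The plan is to prove the two implications separately, with the forward direction being immediate and the entire content lying in the converse. If $\tilde A$ is symmetric, then
$(\tilde A^{k})^{T}=(\tilde A^{T})^{k}=\tilde A^{k}$, so every power of a symmetric matrix is symmetric, and this disposes of one implication for all $k\neq 0$ at once.

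For the converse I would replace ``symmetric'' by an intrinsic condition that is manifestly unchanged under passing to powers. Two equivalent characterizations are available. The spectral one: since $A$ is hyperbolic, $\tilde A$ has two distinct real eigenvalues $\lambda^{\pm 1}$ with $|\lambda|>1$, and a $2\times 2$ real matrix with distinct real eigenvalues is symmetric exactly when its two eigenvectors are mutually orthogonal (orthogonal diagonalizability: writing $\tilde A=PDP^{-1}$ with orthonormal eigenbasis forces $P$ orthogonal and hence $\tilde A=PDP^{T}$). The fixed-point one: writing $\tilde A=\begin{bmatrix} a & b \\ c & d\end{bmatrix}$ with $c\neq 0$, the two fixed points satisfy $u_{1}u_{2}=-b/c$, so $\tilde A$ is symmetric ($b=c$) if and only if $u_{1}u_{2}=-1$; via the formula $\theta_{A}=-\frac{1+u_{1}u_{2}}{u_{1}+u_{2}}$ from \defref{fpr} this is the condition $\theta_{A}=0$, together with the degenerate configuration with fixed points $\{-1,1\}$, which also has $u_{1}u_{2}=-1$.

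The key point is that in either description the relevant data is invariant under $A\mapsto A^{k}$. Indeed $\tilde A$ and $\tilde A^{k}$ have identical eigenvectors (if $\tilde A v=\mu v$ then $\tilde A^{k}v=\mu^{k}v$), and by \thmref{theta} the fixed points, hence the product $u_{1}u_{2}$, are unchanged under nonzero powers. Moreover $A^{k}$ is again hyperbolic: $|\lambda^{k}|=|\lambda|^{k}\neq 1$, so $\tilde A^{k}$ still has distinct real eigenvalues and the characterization applies to it as well. Thus, if $\tilde A^{k}$ is symmetric, its eigenvectors are orthogonal (equivalently $u_{1}u_{2}=-1$); but these are the very same eigenvectors (the same fixed points) as those of $\tilde A$, forcing $\tilde A$ to be symmetric. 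This is essentially the converse asserted after \corref{le2}.

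The one place that needs care is the degenerate situation in which $A$ fixes $\infty$, i.e. $c=0$, so that the fixed-point product is not literally defined and $\theta_{A}$ may be $\infty$. Here the spectral formulation is cleanest: $\tilde A$ is upper triangular, and it is symmetric if and only if it is diagonal, if and only if it fixes $\{0,\infty\}$; since $\tilde A^{k}$ fixes $\infty$ precisely when $\tilde A$ does and a power of a diagonal matrix is diagonal, the equivalence persists. I expect this bookkeeping of the $c=0$ and $\theta_{A}\in\{1,\infty\}$ cases to be the only real obstacle, and since the orthogonal-eigenvector description handles them uniformly, the cleanest write-up likely runs entirely through orthogonality of eigenvectors, invoking \thmref{theta} only to transport the fixed points (equivalently the eigenlines) between $A$ and $A^{k}$.
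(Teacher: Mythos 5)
Your proposal is correct and follows essentially the same route as the paper: symmetry of a lift is characterized by the fixed-point condition $u_{1}u_{2}=-1$ (equivalently, orthogonality of the eigenvectors), and this condition transfers between $\tilde{A}$ and $\tilde{A}^{k}$ because a hyperbolic element shares its fixed points with every nonzero power. The only difference is that you explicitly handle the degenerate case $c=0$ (a fixed point at $\infty$), which the paper's quadratic-equation argument silently assumes away; that is a tightening of the same proof, not a different method.
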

	\begin{proof}
		Let $\tilde{A}=\begin{bmatrix}
			a&&b\\c&&d
		\end{bmatrix}$ be a matrix in ${\rm SL}(2,\R) $, and $ Fix(A)=Fix(A^{k}) $. The fixed points, say  $z_1$ and $z_2$,  of $A$ satisfy this equation,\begin{equation}\label{eq9}
			cz^{2}-(a-d)z-b=0.
		\end{equation}
		If $ \tilde{A} $ is a symmetric matrix, then  $ b=c $ i.e., $ z_{1}z_{2}=-1 $. Since,  $ Fix(A)=Fix(A^{k}) $, and $\tilde{A^{k}}=\begin{bmatrix}
			p&&q\\r&&s
		\end{bmatrix}$. Then $  z_{1},z_{2} $ both are roots of the equation,
		
		\begin{equation}\label{eq98}
			rz^{2}-(p-s)z-q=0.
		\end{equation} 
		Then, $ z_{1}z_{2}=-q/r \imp -1=-q/r\imp q=r.$
		Similarly, the converse is also true.
	\end{proof}
	
	\section{ Proof of \thmref{thm}: For $p$ Odd}\label{odd}
	We note the following observations. 
	\begin{lemma}\label{fp} 
		Let $g\in {\rm PSL}(2, \R)$ be represented by a symmetric matrix $\tilde g\in{\rm SL}(2, \R)$. The map $\iota: z \mapsto -\dfrac{1}{z}$ keeps the fixed point set of $g$ invariant under $ \iota $ . 
	\end{lemma}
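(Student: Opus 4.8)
The plan is to reduce the statement to a single observation about the product of the two fixed points of $g$. Write the symmetric lift as $\tilde g=\begin{bmatrix} a & b\\ b & d\end{bmatrix}\in{\rm SL}(2,\R)$. Since a real symmetric matrix always has real eigenvalues, $g$ cannot be elliptic; hence $g$ is either the identity (for which the claim is vacuous) or a hyperbolic element whose two fixed points lie on $\partial\h^2=\R\cup\{\infty\}$. So it suffices to treat the hyperbolic case and show that $\iota$ permutes the two real fixed points.

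First I would write down the fixed-point equation. From $g(z)=z$ with $\tilde g$ symmetric, the fixed points satisfy $bz^{2}+(d-a)z-b=0$, which is exactly \eqnref{eq9} specialized to $c=b$. Assuming $b\neq 0$, the product of the two roots is $z_{1}z_{2}=-b/b=-1$. Therefore $z_{2}=-1/z_{1}=\iota(z_{1})$, and symmetrically $z_{1}=\iota(z_{2})$, so $\iota$ interchanges the two fixed points; in particular $\iota$ maps $\mathrm{Fix}(g)$ onto itself. The only situation needing separate attention is the degenerate one $b=0$, in which $\tilde g=\mathrm{diag}(a,d)$ with $ad=1$ and $a\neq d$, so that $\mathrm{Fix}(g)=\{0,\infty\}$; here $\iota(0)=\infty$ and $\iota(\infty)=0$, and again the set is preserved.

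Alternatively, and perhaps more transparently, I would argue by conjugation. With $S=\begin{bmatrix}0&-1\\1&0\end{bmatrix}$ the lift of $\iota$, a direct computation gives $S\tilde g S^{-1}=\begin{bmatrix} d & -b\\ -b & a\end{bmatrix}$, which satisfies the very same fixed-point equation $bz^{2}+(d-a)z-b=0$ as $g$. Since $\mathrm{Fix}(\iota g\iota^{-1})=\iota(\mathrm{Fix}(g))$ for any M\"obius map, this equality of fixed-point sets immediately yields $\iota(\mathrm{Fix}(g))=\mathrm{Fix}(g)$.

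There is no genuine obstacle in either route beyond routine bookkeeping; the only point one must not overlook is to admit $\infty$ as a possible fixed point, so that the diagonal case $b=0$ is covered on the same footing as the generic case. For this reason I would favour stating the conclusion set-theoretically, as $\iota(\mathrm{Fix}(g))=\mathrm{Fix}(g)$, rather than solving explicitly for the fixed points.
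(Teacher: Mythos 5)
Your proposal is correct and follows essentially the same route as the paper: both hinge on the fixed-point equation $bz^{2}+(d-a)z-b=0$ for a symmetric lift and the observation that its root set is preserved by $z\mapsto -1/z$ (your product-of-roots identity $z_{1}z_{2}=-1$ and your conjugation computation $S\tilde g S^{-1}$ are just equivalent phrasings of the paper's remark that the equation is invariant under $\iota$). The only genuine additions are cosmetic but welcome: you treat the degenerate diagonal case $b=0$ with fixed points $\{0,\infty\}$ explicitly, which the paper's terse proof glosses over.
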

	
	\begin{proof} Let $\tilde{g}$, a lift of $ g $ of ${\rm PSL}(2, \R)$ represents  by the matrix $\begin{bmatrix} a  & b \\ c & d \end{bmatrix}$ in ${\rm SL}(2, \R)$. Recall that the fixed point set of $g$ is given by the equation
		$$cz^2 + (d-a) z -b=0.$$
		If $A$ is symmetric, then $b=c$, and hence the above equation is invariant under the map $\iota$.  
	\end{proof}

	\begin{lemma}\label{hyp} 
		Let a hyperbolic element $g$ in $\Gamma_p$ such that $g$ is conjugate to an element in $\Gamma_{p}$ that has a symmetric matrix representation in ${\rm SL}(2, \R)$. Then $ g $ is reciprocal. 
	\end{lemma}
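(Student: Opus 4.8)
The plan is to reduce to the case where $g$ itself has a symmetric lift and then exhibit $\iota$ as a reciprocator by a one-line matrix computation. First I would record that reciprocity is invariant under conjugation inside $\Gamma_p$: if $g=\gamma g'\gamma^{-1}$ with $\gamma\in\Gamma_p$ and $g'$ is reciprocal with reciprocator $r\in\Gamma_p$ (so $r g' r^{-1}=(g')^{-1}$), then $(\gamma r\gamma^{-1})\,g\,(\gamma r\gamma^{-1})^{-1}=g^{-1}$, and $\gamma r\gamma^{-1}\in\Gamma_p$ since $\Gamma_p$ is a group. Hence it suffices to prove the lemma under the assumption that $g$ itself admits a symmetric lift $\tilde g\in{\rm SL}(2,\R)$.

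So I would write $\tilde g=\begin{bmatrix} a & b\\ b & d\end{bmatrix}$ with $ad-b^2=1$, and take $S=\begin{bmatrix}0&-1\\1&0\end{bmatrix}$ as the standard lift of $\iota$. The key step is the direct computation $S\tilde g S^{-1}=\begin{bmatrix}d&-b\\-b&a\end{bmatrix}=\tilde g^{-1}$, the last equality being the inversion formula for a determinant-one symmetric matrix. Projecting to ${\rm PSL}(2,\R)$ gives $\iota g\iota^{-1}=g^{-1}$. Since $\iota\in\Gamma_p$, this shows $g$ is reciprocal in $\Gamma_p$, with $\iota$ as a reciprocator.

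I would then note two clean features of this argument. The computation is uniform across the two admissible values $\theta_g=0$ and $\theta_g=1$ of the fixed-point ratio (both correspond to $b=c$ in a symmetric lift), so no case split is required; this is exactly the converse direction promised after \corref{le2}. Geometrically, the identity is the algebraic shadow of \lemref{fp}: because $\tilde g$ is symmetric, $\iota$ preserves the fixed-point set of $g$, and since it conjugates $g$ to $g^{-1}$ it must interchange the attracting and repelling fixed points. Finally, because $\iota^2=1$, the reciprocator $\iota$ (and each of its $\Gamma_p$-conjugates) is an involution, consistent with \thmref{fuchth}: the element $g$ is in fact strongly reciprocal.

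As for difficulty, there is essentially no analytic obstacle; the entire content is the bookkeeping of the reduction step together with the two-by-two multiplication. The only point deserving care is ensuring that the reciprocator produced for the conjugated element $g$ genuinely lies in $\Gamma_p$ rather than merely in ${\rm PSL}(2,\R)$, which is why I insist on conjugating within $\Gamma_p$ in the first paragraph; the symmetric-representative hypothesis is used solely to guarantee that $\iota$ — an element of $\Gamma_p$ — does the reversing.
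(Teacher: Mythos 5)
Your proof is correct, and it takes a genuinely different route from the paper's. Your key step is the identity $S\tilde g S^{-1}=\tilde g^{-1}$, valid for every symmetric $\tilde g\in{\rm SL}(2,\R)$ of determinant one, which exhibits $\iota$ itself as a reciprocator; combined with the (trivial) observation that reciprocity is preserved under conjugation within $\Gamma_p$, this settles the lemma. The paper argues instead by soft group theory: it first reduces to a primitive $g$ (via \lemref{syme}), uses \lemref{fp} to see that $\iota$ preserves the fixed-point set of $g$, so that $\langle g,\iota g\iota\rangle$ is an elementary subgroup, and then runs the argument of \lemref{fuch} (which rests on the classification of elementary Fuchsian groups, \thmref{Ka1}) to conclude $\iota g\iota=g^{k}$, finally ruling out all exponents except $k=-1$ using primitivity and the fact that $g$ is not an involution. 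Your computation is shorter and needs neither discreteness nor the primitivity reduction --- it actually proves the stronger statement that \emph{any} element of ${\rm PSL}(2,\R)$ admitting a symmetric lift is inverted by $\iota$ --- and, as you note, it treats the two cases $\theta_g=0$ and $\theta_g=1$ uniformly. It is also consistent with the paper's own machinery: setting $\theta_A=0$ in the reciprocator $\Phi$ of \lemref{co1} yields exactly $\iota$, so your argument is in effect the degenerate case of that lemma, applied where the paper chose not to apply it. What the paper's route buys is economy at the global level (it recycles the elementary-subgroup argument already set up for \thmref{fuchth} and keeps the geometric picture of $\iota$ swapping the attracting and repelling fixed points in view), but for this particular lemma your direct matrix identity is the cleaner and more self-contained proof.
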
 
	\begin{proof} Let $g$ be a hyperbolic element in $\Gamma_p$. Then, without loss of generality, we show that if $\tilde g$ is a symmetric matrix in ${\rm SL}(2, \R)$,  then $g$ is reciprocal with reversing symmetry $\iota$. Also, without loss of generality \ref{syme}, we assume that $g$ is a primitive element in $\Gamma_p$. By \lemref{fp}, the fixed point set of $g$ is invariant under $\iota$. So, $g$ and $\iota g \iota$ have the same set of fixed points. Thus,  $\langle g, \iota g \iota \rangle$ is an elementary two-generator subgroup of $\Gamma_p$. Using arguments as in the proof of \lemref{fuch}, it follows that 
		\[ \iota g \iota= g^{p} \hbox{ for some }  p\in \z-\{0\}. \] 
		But,  $g=\iota g^{p} \iota$, i.e.   $ g=(\iota g \iota)^{p}$.   
		That means $ p $ must be $1$ or $-1$; otherwise, it will be a contradiction to the assumption that $g$ is primitive. If $p=1$,  then  $g$ must lie in the center of $\iota$ in $\Gamma_p$, so $g$ is of order at most two. This is again a contradiction. Hence $p$ must be $-1$, and hence $g$ is reciprocal. 
	\end{proof} 
	\begin{pro}\label{new2}
		The symmetric hyperbolic elements in $ \rm PSL(2,\R) $ are reciprocal, and the reverser is $\iota$. 
	\end{pro}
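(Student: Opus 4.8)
The plan is to prove \propref{new2}, which asserts that a symmetric hyperbolic element $A$ in ${\rm PSL}(2,\R)$ is reciprocal with reverser $\iota$. Since the machinery has already been assembled, this should be a short deduction. The key fact is \lemref{co1} together with the surrounding computation: a symmetric matrix $\tilde A = \begin{bmatrix} a & b \\ b & d \end{bmatrix}$ has $b=c$, so by \defref{fpr} its fixed-point ratio is $\theta_A = (b-c)/(a-d) = 0$ (when $a\neq d$), and the degenerate cases $a=d$ are handled separately. Thus I would first reduce to computing with $\theta_A = 0$.

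First I would invoke \lemref{co1} with $\theta_A = 0$. Plugging $\theta_A = 0$ into the reciprocator formula
\[
\Phi = \frac{1}{\sqrt{1-\theta_A^2}}\begin{bmatrix} \theta_A & 1 \\ -1 & -\theta_A \end{bmatrix}
\]
yields $\Phi = \begin{bmatrix} 0 & 1 \\ -1 & 0 \end{bmatrix}$, which is exactly the lift $S$ of $\iota$ up to sign. Hence \lemref{co1} directly gives $\Phi A \Phi^{-1} = A^{-1}$ in ${\rm PSL}(2,\R)$, i.e. $\iota A \iota^{-1} = A^{-1}$, so $A$ is reciprocal with reverser $\iota$. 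This handles the main (nondegenerate) case $a \neq d$.

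Next I would treat the case $a = d$. If $A$ is symmetric with $a = d$, then $\tilde A = \begin{bmatrix} a & b \\ b & a \end{bmatrix}$, and the fixed points are $\pm 1$, so $\theta_A = 1$ by \defref{fpr}. This is precisely the situation analyzed in the discussion preceding the corollary on hyperbolic elements with fixed point set $\{-1,1\}$: there the reciprocator $\Phi$ has fixed points of the form $e^{it}$, and $\iota$ itself has fixed points $\pm i = e^{\pm i\pi/2}$ lying on the unit semicircle (the hyperbolic axis). A direct check shows $S A S^{-1} = A^{-1}$ in ${\rm PSL}(2,\R)$ for such $A$, so again $\iota$ is a reverser. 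Alternatively, I could simply cite \lemref{fp}, which shows $\iota$ preserves the fixed point set of a symmetric $g$, and then run the primitivity argument from \lemref{hyp} verbatim.

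The main obstacle here is essentially bookkeeping rather than mathematics: I must make sure the two fixed-point-ratio regimes ($\theta_A=0$ versus $\theta_A=1$) are both covered, since the formula in \lemref{co1} is only valid when $|\theta_A| < 1$ and therefore does not directly apply to the $a=d$ case. The cleanest unified route is to bypass the $\theta_A$ formula altogether and observe that for \emph{any} symmetric $\tilde A$ a direct matrix computation gives $S\tilde A S^{-1} = \tilde A^{-1}$ up to sign, since $S\begin{bmatrix} a & b \\ b & d\end{bmatrix}S^{-1} = \begin{bmatrix} d & -b \\ -b & a\end{bmatrix}$ and $\tilde A^{-1} = \begin{bmatrix} d & -b \\ -b & a\end{bmatrix}$ when $\det \tilde A = 1$. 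This single identity proves the proposition in all cases at once, and I would present it as the core of the proof, noting that it identifies $\iota$ as the reverser and is consistent with \lemref{co1} and \corref{le2}.
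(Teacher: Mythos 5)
Your proof is correct, but it proceeds along a genuinely different route from the paper. The paper disposes of \propref{new2} in one line, citing \lemref{hyp} and \lemref{syme}; that is, it leans on the group-theoretic argument in which $\iota$ preserves the fixed-point set of a symmetric element (\lemref{fp}), the subgroup $\langle g,\iota g\iota\rangle$ is elementary, and primitivity forces $\iota g\iota=g^{-1}$. Your closing computation replaces all of this with the single identity
\[
S\begin{bmatrix} a & b\\ b & d\end{bmatrix}S^{-1}=\begin{bmatrix} d & -b\\ -b & a\end{bmatrix}=\begin{bmatrix} a & b\\ b & d\end{bmatrix}^{-1}
\quad (ad-b^{2}=1),
\]
which descends to $\iota A\iota^{-1}=A^{-1}$ in ${\rm PSL}(2,\R)$. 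This buys three things: it is uniform, so the case split $\theta_A=0$ versus $\theta_A=1$ in your first two paragraphs (forced by the hypothesis $|\theta_A|<1$ in \lemref{co1}) becomes unnecessary; it uses no discreteness whatsoever; and it matches the scope of the statement, which concerns all of ${\rm PSL}(2,\R)$, whereas \lemref{hyp} --- the paper's cited ingredient --- is stated and proved for the Hecke group $\Gamma_p$, with a proof that invokes Fuchsian-group facts (primitive elements, the classification of elementary Fuchsian groups), so the paper's one-line proof implicitly asks the reader to transplant that argument outside the discrete setting. What the paper's route buys in exchange is economy and reusability: the fixed-point/elementary-subgroup argument is exactly the one needed later inside $\Gamma_p$, where the reverser must be located within the discrete group and no such normal-form computation is available. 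If you write your version up, lead with the matrix identity and drop the first two paragraphs, which it renders redundant.
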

	\begin{proof}
		The proof follows from the \lemref{hyp} and \lemref{syme}.
	\end{proof}

	\begin{lemma}
		A hyperbolic element $g$ in $\Gamma_p$, is reciprocal if and only if $\tilde g$ is conjugate to an element that has a symmetric matrix representation in ${\rm SL}(2, \R)$. 
	\end{lemma}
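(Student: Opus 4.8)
The plan is to prove the two implications separately, treating the backward direction as essentially known and concentrating on the forward direction. For the ``if'' direction, suppose $g$ is conjugate in $\Gamma_p$ to an element whose lift in ${\rm SL}(2,\R)$ is symmetric; then \lemref{hyp} directly yields that $g$ is reciprocal, since reciprocity is a conjugacy-invariant property. So the entire content lies in showing that a hyperbolic reciprocal element $g$ is conjugate to one with a symmetric lift.

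First I would invoke \thmref{fuchth}: since $g$ is reciprocal in the Fuchsian group $\Gamma_p$, any reciprocator $h$ of $g$ is an involution, so $hgh^{-1}=g^{-1}$ with $h^2=1$. The key structural input, special to the case $p$ odd, is a classification of the involutions of $\Gamma_p$. Using the identification $\Gamma_p\cong\z_2\ast\z_p$ together with the standard fact that every torsion element of a free product is conjugate into one of the free factors, and observing that $\z_p$ contains no element of order two when $p$ is odd, I conclude that every involution of $\Gamma_p$ is conjugate to $\iota$. Hence there exists $w\in\Gamma_p$ with $h=w\iota w^{-1}$.

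Next I would transport the reciprocity relation onto $\iota$. Setting $g'=w^{-1}gw$, the relation $hgh^{-1}=g^{-1}$ rewrites as $\iota g'\iota=g'^{-1}$ (using $\iota^2=1$), so $g'$ is a hyperbolic reciprocal element of $\Gamma_p$ whose reciprocator is exactly $\iota$. Now \corref{le2} applies verbatim: it forces $\theta_{g'}\in\{0,1\}$, equivalently every lift of $g'$ in ${\rm SL}(2,\R)$ is a symmetric matrix. Since $g'=w^{-1}gw$ is conjugate to $g$ in $\Gamma_p$, this exhibits $g$ as conjugate to an element with a symmetric lift, completing the forward direction.

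The main obstacle is the classification of involutions in $\Gamma_p$; everything else is bookkeeping once the reciprocator is replaced by $\iota$. The crux is the free-product argument showing that, for $p$ odd, there is a single conjugacy class of involutions, represented by $\iota$. I would take care to cite (or reprove) the conjugacy-into-a-factor statement for torsion in free products, since this is precisely the point where the parity of $p$ enters and where the odd case will diverge from the even case treated subsequently.
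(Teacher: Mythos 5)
Your proposal is correct and follows essentially the same route as the paper: the backward direction is \lemref{hyp}, and the forward direction uses \thmref{fuchth} to make the reciprocator an involution, conjugates that involution to $\iota$, transports the reversing relation, and applies \corref{le2} to get a symmetric lift. The only difference is that you justify the claim that for $p$ odd every involution of $\Gamma_p$ is conjugate to $\iota$ via the free-product torsion argument, a fact the paper's proof simply asserts.
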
 
	\begin{proof}
		The necessary condition follows from \lemref{hyp}. We now prove the converse. 
		
		Suppose that $g$ is a reciprocal and primitive element in the Hecke group $\Gamma_p$. Then there is an involution $h$ in $ \Gamma_{p} $ such that $hgh^{-1}=g^{-1}$. Since $p$ is odd, up to conjugacy, $\iota$ is the only involution in $\Gamma_p$. 
		%For $p$ even,  by \lemref{rever}, it must be conjugate to $\iota$.  
		Let $ m\in  \Gamma_p$ be an element such that 
		$ h=m\iota m^{-1} $
		\[h g h^{-1}=g^{-1}\]
		\[ \imp m \iota m^{-1} g m \iota m^{-1} =g^{-1}\]
		\[ \imp \iota (m^{-1}gm)\iota=m^{-1}g^{-1}m\]
		Let $k=m^{-1} g m$. Then $\iota$ conjugates $k$ to $k^{-1}$.
		Since $\iota$ conjugates $k$ to $k^{-1}$,  by \corref{le2},  $  m^{-1}gm $ has symmetric matrix representation. Hence 
		$g$ is conjugate to a symmetric matrix.
	\end{proof}

	\subsection{Proof of \thmref{thm} for odd p } 	Combining \lemref{fuch} and \lemref{pro2}, we obtain the equivalence of (1) and (2). Equivalence between (2) and (3) follows from the above two lemmas. So, the theorem is proved for the odd case. \qed
	
	\section{ Proof of \thmref{thm}: For $p$ Even}\label{even} 
	We have seen reciprocity in $\Gamma_p$ for $p \geq 3$ odd. But for $p$ even,  some questions appear. 
	\begin{enumerate}
		\item Is there any other involution not conjugate to $\iota$?
		\item If it is, then does it work as a reciprocator?
	\end{enumerate}
	The answers to these questions are yes. For the first question, the other involutions are conjugate to $(\iota \alpha_p)^{m}$ where $ 2m=p $. We will denote $\iota \alpha_p$ by $\gamma_{p}$. And for the second one suffices to show the existence of reversing symmetry other than $ \iota $.
	\subsection{Suppose $p$ is even,  $p=2m$,  $m>2$}   The element $ \gamma_{p} $ defined by   $\gamma_{p}: z \mapsto -\dfrac{1}{z+\lambda_p}$, is of order $ p $. Then 
	$\gamma^{m}_{p}$ is an involution in $\Gamma_{p}$,  see \cite{DKS}.
	Lifts of $\gamma_{p}$ in ${\rm SL}(2,\R)$ are 
	$ \pm \begin{bmatrix}
		0 && -1\\
		1 && {\lambda_p} 
	\end{bmatrix} $. To determine the involution $\gamma^{m}_{p}$, we need the following lemma.
	\begin{lemma}
		Let $A=  \begin{bmatrix}
			0 && -1\\
			1 && \lambda_p 
		\end{bmatrix}  $ be the lift of $\gamma_{p}$ in ${\rm SL}(2,\R) $ . Then, for any natural number $ k $, $ A^{k}=\begin{bmatrix}
			-a_{k-1}&& -a_{k}\\
			a_{k}&& a_{k+1}
		\end{bmatrix} $ where $ a_{k+1}=-a_{k-1}+\lambda_p a_{k} $ and $ a_{0}=0,a_{1}=1 $ . 
		
	\end{lemma}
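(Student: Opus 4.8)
The plan is to prove this by straightforward induction on $k$, since the asserted form of $A^k$ is tailor-made to interact with the defining recurrence $a_{k+1}=-a_{k-1}+\lambda_p a_k$.

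First I would check the base case $k=1$. With $a_0=0$ and $a_1=1$, the recurrence gives $a_2=-a_0+\lambda_p a_1=\lambda_p$, so the claimed matrix $\begin{bmatrix} -a_0 & -a_1 \\ a_1 & a_2 \end{bmatrix}=\begin{bmatrix} 0 & -1 \\ 1 & \lambda_p \end{bmatrix}$ is exactly $A$, as required.

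For the inductive step, I would assume the formula holds for some $k$ and compute $A^{k+1}=A^k A$ by direct multiplication:
\[
A^{k+1}=\begin{bmatrix} -a_{k-1} & -a_k \\ a_k & a_{k+1} \end{bmatrix}\begin{bmatrix} 0 & -1 \\ 1 & \lambda_p \end{bmatrix}
=\begin{bmatrix} -a_k & a_{k-1}-\lambda_p a_k \\ a_{k+1} & -a_k+\lambda_p a_{k+1} \end{bmatrix}.
\]
Comparing with the desired form $\begin{bmatrix} -a_k & -a_{k+1} \\ a_{k+1} & a_{k+2} \end{bmatrix}$, the two diagonal-bottom-left entries already match verbatim, while the $(1,2)$ entry requires $a_{k-1}-\lambda_p a_k=-a_{k+1}$, which is precisely the recurrence $a_{k+1}=-a_{k-1}+\lambda_p a_k$, and the $(2,2)$ entry requires $-a_k+\lambda_p a_{k+1}=a_{k+2}$, which is the same recurrence with index shifted by one. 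Both hold by definition of the sequence $(a_k)$, completing the induction.

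There is essentially no genuine obstacle here: the result is a routine verification, and the only point demanding any care is bookkeeping on the index shifts so that the recurrence is invoked at the correct step (once at index $k$ and once at index $k+1$). The content worth flagging for later use is that the $a_k$ satisfy the Chebyshev-type recurrence $a_{k+1}=\lambda_p a_k-a_{k-1}$, so they are polynomials in $\lambda_p=2\cos\pi/p$; this is what will ultimately let one read off $\gamma_p^m=A^m$ explicitly and identify the involution when $2m=p$.
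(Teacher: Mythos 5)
Your proof is correct and follows essentially the same route as the paper: induction on $k$, multiplying $A^k$ by $A$ and invoking the recurrence $a_{k+1}=-a_{k-1}+\lambda_p a_k$ (once at index $k$, once at index $k+1$) to identify the resulting entries. If anything, your write-up is slightly more careful than the paper's, which leaves the use of the recurrence in the $(1,2)$ entry implicit.
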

	\begin{proof}
		We will use induction on $k$. For $k=1$, the matrix $ A $ is $ \begin{bmatrix}
			-a_{0}&& -a_{1}\\
			a_{1}&& a_{2}
		\end{bmatrix} $, i.e.,  $ \begin{bmatrix}
			0 && -1\\
			1 && \lambda_p 
		\end{bmatrix}. $ So the lemma is true for $k=1$.\\
		Let us consider the lemma is true for $k=n$.
		So, that means $ A^{n}=   \begin{bmatrix}
			0 && -1\\
			1 && \lambda_p\end{bmatrix}^{n}$
		\[ \imp A^{n}=\begin{bmatrix}
			-a_{n-1}&& -a_{n}\\
			a_{n}&& a_{n+1}
		\end{bmatrix} , \] where $ a_{n+1}=-a_{n-1}+\lambda_p $. 
		Then \[ A^{n+1}=\begin{bmatrix}
			-a_{n-1}&& -a_{n}\\
			a_{n}&& a_{n+1}
		\end{bmatrix}\begin{bmatrix}
			0 && -1\\
			1&& \lambda_p
		\end{bmatrix}\]
		\[ = \begin{bmatrix}
			-a_{n}&& -a_{n+1}\\
			a_{n+1}&& -a_{n}+ \lambda_p a_{n+1}
		\end{bmatrix}\]
		\[= \begin{bmatrix}
			-a_{n}&& -a_{n+1}\\
			a_{n+1}&& a_{n+2}
		\end{bmatrix}  ,\] since,   $ a_{k+1}=-a_{k-1}+\lambda_p a_{k}$. 
		So the lemma is true for $k=n+1$. Hence the lemma is proved.
	\end{proof}
	\begin{lemma}\label{B}
		The element   $\gamma_{p}^m$ in $\Gamma_q$  lifts to the following matrices in ${\rm SL}(2, \R)$: 
		
		$$\pm \begin{bmatrix}
			-\frac{\cos\frac{\pi}{p}}{\sin\frac{\pi}{p}}&& -\frac{1}{\sin\frac{\pi}{p}}\\
			\frac{1}{\sin\frac{\pi}{p}}&& \frac{\cos\frac{\pi}{p}}{\sin\frac{\pi}{p}}	
		\end{bmatrix}.$$
	\end{lemma}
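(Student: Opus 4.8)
The plan is to exploit the closed form for the powers of $A$ established in the previous lemma, where $A^k = \begin{bmatrix} -a_{k-1} & -a_{k} \\ a_{k} & a_{k+1} \end{bmatrix}$ with $a_{k+1} = -a_{k-1} + \lambda_p a_{k}$, $a_0 = 0$, $a_1 = 1$, and $\lambda_p = 2\cos(\pi/p)$. Since $\gamma_p^m$ lifts to $\pm A^m$ with $m = p/2$, the whole problem reduces to evaluating the three entries $a_{m-1}$, $a_m$, and $a_{m+1}$.

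First I would solve the linear recurrence $a_{k+1} = 2\cos(\pi/p)\, a_{k} - a_{k-1}$. This is the standard Chebyshev-type recurrence, and the solution matching the given initial data is $a_k = \dfrac{\sin(k\pi/p)}{\sin(\pi/p)}$. I would confirm this by checking the base values $a_0 = 0$ and $a_1 = 1$, and then verifying the trigonometric identity $\sin((k+1)\theta) + \sin((k-1)\theta) = 2\cos\theta\,\sin(k\theta)$ with $\theta = \pi/p$, which is precisely the recurrence. If one prefers to avoid positing the closed form, an equivalent route is a direct induction on $k$ establishing $a_k = \sin(k\pi/p)/\sin(\pi/p)$, which the inductive framework of the previous lemma already supports.

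Next I would substitute $m = p/2$, so that $m\pi/p = \pi/2$. Using $\sin(\pi/2) = 1$, $\sin(\pi/2 - \pi/p) = \cos(\pi/p)$, and $\sin(\pi/2 + \pi/p) = \cos(\pi/p)$, I obtain $a_m = 1/\sin(\pi/p)$ and $a_{m-1} = a_{m+1} = \cos(\pi/p)/\sin(\pi/p)$. Plugging these into $A^m = \begin{bmatrix} -a_{m-1} & -a_{m} \\ a_{m} & a_{m+1} \end{bmatrix}$ yields exactly the claimed matrix, and the $\pm$ records the two lifts of $\gamma_p^m$ in ${\rm SL}(2,\R)$.

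The computation is entirely routine once the closed form is in hand, so there is no genuine obstacle; the only conceptual step is recognizing the trigonometric solution of the recurrence (equivalently, that $\lambda_p = 2\cos(\pi/p)$ forces Chebyshev-type behavior). As a sanity check I would note that the resulting matrix is symmetric, as one should expect for a lift of the involution $\gamma_p^m$, and that its square is the identity up to sign, consistent with $\gamma_p^m$ having order two.
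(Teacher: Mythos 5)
Your proof is correct, but it takes a genuinely different route from the paper. You solve the recurrence $a_{k+1}=\lambda_p a_k-a_{k-1}$ in closed form, $a_k=\sin(k\pi/p)/\sin(\pi/p)$ (the Chebyshev-type solution), and then simply evaluate at $k=m-1,m,m+1$ with $m\pi/p=\pi/2$; the claimed matrix drops out immediately. The paper never solves the recurrence: instead it invokes the fact (cited from \cite{DKS}) that $\gamma_p^m$ is an involution, so $B=A^m$ satisfies $B^2=\pm Id$, and it extracts the three scalar equations $a_{m-1}^2-a_m^2=\pm1$, $a_m(a_{m-1}-a_{m+1})=0$, $a_{m+1}^2-a_m^2=\pm1$; a case analysis (ruling out $a_m=0$, which would force $B$ to project to the identity) combined with one application of the recurrence then yields $a_{m+1}=a_{m-1}=\cos(\pi/p)\,a_m$ and $a_m=\pm 1/\sin(\pi/p)$. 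The trade-off: the paper's argument needs the external input that $\gamma_p^m$ has order two and some sign bookkeeping, but stays purely algebraic; your argument is self-contained and more informative --- it gives all powers $A^k$ at once (from which the order of $\gamma_p$ and the involution property of $\gamma_p^m$ are corollaries rather than hypotheses), and your concluding sanity checks (symmetry of the matrix, $B^2=-Id$) are exactly the facts the paper had to assume to get started.
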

	
	\begin{proof}
		Let us consider $ p=2m $. Let $ A $ be one of the lifts of $\gamma_{p}$ in $ \rm SL(2,\R) $. So, $ B=A^{m}=\begin{bmatrix}
			-a_{m-1}&& -a_{m}\\
			a_{m}&& a_{m+1}
		\end{bmatrix}$ where $ a_{m+1}=-a_{m-1}+\lambda_p $, is a lift of the involution $\gamma_{p}^{m}$. Thus, then $ B^{2}=\pm Id
		$ 
		\[ \begin{bmatrix}
			-a_{m-1}&& -a_{m}\\
			a_{m}&& a_{m+1}
		\end{bmatrix}^{2}=\pm Id\]
		\[\imp \begin{bmatrix}
			a^{2}_{m-1}-a^{2}_{m}&& a_{m-1}a_{m}-a_{m}a_{m+1}\\
			a_{m-1}a_{m}-a_{m}a_{m+1}&& 	a^{2}_{m+1}-a^{2}_{m}
		\end{bmatrix}=\pm Id \]
		\[ \imp 	a^{2}_{m-1}-a^{2}_{m}=\pm1~~~~~ \\ \hbox{ , } \\ ~ ~~~~  a_{m-1}a_{m}-a_{m}a_{m+1}=0 ~~~~~ \\ \hbox{ , } \\ ~ ~~~~ a^{2}_{m+1}-a^{2}_{m}=\pm1.\]
		From the second equation, if we obtain two cases.\n
		\textbf{Case 1. } 
		$ a_{m}=0 $, then from the first and third equation $ a^{2}_{m+1}=1 \imp a_{m+1}=\pm 1.~~~~~~~~\hbox{Similarly } a_{m-1}\ =\pm 1.$ Also for the determinant condition $ a_{m-1} $ and $  a_{m+1} $ have opposite signs.  So $ B  $ must projects to the identity of $ {\rm PSL}(2,\R) $ . 
		This contradicts our hypothesis.\n
		\textbf{Case 2. }
		$ a_{m}\neq 0 $, then $ a_{m+1}=a_{m-1} $ then 
		\[a_{m+1}=-a_{m-1}+\lambda_p  a_{m} \]
		\[\imp  2a_{m+1}=\lambda_p a_{m}\]
		\[\imp a_{ m+1 }=\cos\frac{\pi}{p}a_{m}. \]
		Then $ a^{2}_{ m+1 }\leq a^{2}_{ m } $ and 
		\[a^{2}_{m+1}-a^{2}_{m}=-1 \]
		\[ \imp a^{2}_{m}-\cos^{2}\frac{\pi}{p}a^{2}_{m}=0 \]
		\[\imp a_{m}=\pm \frac{1}{\sin\frac{\pi}{p}}.\]
		So the lifts of  $ \gamma_{p}^{m}  $ are $\pm \begin{bmatrix}
			-\frac{\cos\frac{\pi}{p}}{\sin\frac{\pi}{p}}&& -\frac{1}{\sin\frac{\pi}{p}}\\
			\frac{1}{\sin\frac{\pi}{p}}&& \frac{\cos\frac{\pi}{p}}{\sin\frac{\pi}{p}}	
		\end{bmatrix} $ in ${\rm SL}(2,\R) $ .
	\end{proof}\n
	Now, we show the existence of reversing symmetry conjugate to $\gamma^{m}_{p}$.
	\begin{lemma}\label{ext}
		Any reciprocal hyperbolic element in $ \Gamma_{p} $ have reciprocators conjugate to either $ \iota $ or $ \gamma^{m}_{p} $. 
	\end{lemma}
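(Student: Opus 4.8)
The plan is to reduce the statement to a classification of involutions in $\Gamma_p$ and then to read that classification off from the free product structure $\Gamma_p \cong \z_2 \ast \z_p$. First I would invoke \thmref{fuchth}: since $\Gamma_p$ is a Fuchsian group and $g$ is a reciprocal hyperbolic element, part (1) of that theorem guarantees that every reciprocator of $g$ is an involution (it cannot be the identity, as $g \neq g^{-1}$ for a hyperbolic $g$). Thus it suffices to determine, up to conjugacy, all the involutions of $\Gamma_p$.

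Next I would exploit the presentation $\Gamma_p = \langle \iota \rangle \ast \langle \gamma_p \rangle$, where $\langle \iota \rangle \cong \z_2$ and $\langle \gamma_p \rangle \cong \z_p$; note that this agrees with the generating set $\{\iota, \alpha_p\}$ since $\alpha_p = \iota\gamma_p$. By the standard normal-form theory of free products, every element of finite order in $A \ast B$ is conjugate into one of the factors $A$ or $B$. Applying this, each involution of $\Gamma_p$ is conjugate to an involution lying in $\langle \iota \rangle$ or in $\langle \gamma_p \rangle$. The factor $\langle \iota \rangle \cong \z_2$ contains the single involution $\iota$, while, because $p = 2m$ is even, the factor $\langle \gamma_p \rangle \cong \z_{2m}$ contains exactly one involution, namely $\g$. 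Hence every involution of $\Gamma_p$, and in particular every reciprocator of $g$, is conjugate to $\iota$ or to $\g$, which is the assertion.

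The argument is essentially immediate once \thmref{fuchth} is in hand, so the only substantive input is the torsion structure of the free product, which is classical. The step that most requires care is the \emph{completeness} of the list: one must confirm both that no involution escapes conjugation into a factor, and that $\langle \gamma_p \rangle$ contributes precisely one involution class, using that $p$ is even so that the powers $\gamma_p^k$ with $k \neq 0, m$ have order different from $2$. I would also flag, to avoid overclaiming, that this lemma only \emph{bounds} the possible reciprocators; the complementary question of whether both classes $\iota$ and $\g$ are actually realized as reciprocators of hyperbolic elements is an existence statement settled separately by the fixed-point-ratio computations of \secref{fpr} (e.g.\ \lemref{co1} and \corref{le2}).
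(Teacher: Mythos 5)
Your proposal is correct, and it takes a genuinely different route from the paper's. The paper proves \lemref{ext} by a hands-on normal-form computation: following \cite{HR}, it writes the reciprocal element up to conjugacy as a cyclically reduced word $W=V_{j_1}V_{j_2}\cdots V_{j_n}$ with $V_{j}=(\alpha_p\iota)^{j-1}\alpha_p$, invokes the free-product conjugacy criterion of \cite{MKD} to see that $\iota W^{-1}\iota$ is a cyclic permutation of $W$, and then grinds out relations among the exponents $j_k$ to show that the resulting reverser $R\iota$ is an involution whose cyclically reduced form is either $\iota$ or $(\alpha_p\iota)^{m}$, the latter conjugate to $\gamma_p^{m}$. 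You bypass this entirely by combining two structural facts: \thmref{fuchth}(1), which forces every reciprocator of a hyperbolic reciprocal element to be an involution, and the classical theorem that every finite-order element of a free product is conjugate into one of the factors, applied to $\Gamma_p\cong\mathbb{Z}_2\ast\mathbb{Z}_p$; listing the involutions of the factors ($\iota$ in $\mathbb{Z}_2$, and exactly one involution $\gamma_p^{m}$ in $\mathbb{Z}_p$ when $p=2m$, none when $p$ is odd) then finishes the proof. Your argument is shorter, more conceptual, and treats the odd and even cases uniformly. What the paper's computation buys, and yours does not, is the finer information recorded in the Note after its proof: the reciprocators are exhibited as explicit subwords, and the parity of the syllable length $n$ (and of the splitting $W=RQ$) decides whether a given reciprocal class admits reciprocators of one type or of both types; this refinement is what drives \corref{co3} and the case analysis in \thmref{sym}, so the paper's longer proof is doing extra work beyond the bare statement of the lemma. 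Your closing caveat---that the lemma only constrains the possible conjugacy classes of reciprocators, realization of both classes being a separate existence question settled by the fixed-point-ratio results such as \lemref{co1} and \corref{le2}---is accurate and well placed.
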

	\begin{proof}
		Let us consider $ M $ to be a reciprocal element in the Hecke group $ \Gamma_{p} $. So, by \cite[Lemma 3.1]{HR}, $ M $ is conjugate to \[W= V_{j_{1}}V_{j_{2}}\dots V_{j_{n}},\]
		
		where $ V_{j_{k}}=( U)^{j_{k}-1}\ap $, $ U= \ap\iota $, $ 1\leq j_{k}\leq p-1 $ for $ 1\leq k\leq n $ and $ n\in \mathbb{N}  $. The product is unique up to cyclic permutation. Since reciprocity is invariant under the conjugation, by \cite[Theorem 1.4]{MKD}, the reduced form of $ W $ and $ W^{-1} $ differ by a cyclic permutation. Then,
		\[ W^{-1}=V^{-1}_{j_{n}}V^{-1}_{j_{n-1}}\dots V^{-1}_{j_{1}}\]
		\[\imp W^{-1}=\ap^{-1}U^{p-j_{n}+1}\ap^{-1}U^{p-j_{n-1}+1}\dots \ap^{-1}U^{p-j_{1}+1}\] 
		\[\imp W^{-1}=\ap^{-1}(\ap \iota\dots \ap \iota) \ap^{-1}(\ap\iota\ap\iota\dots \ap\iota)\dots \ap^{-1}(\ap\iota\ap\iota\dots \ap\iota)\]
		\[ \imp W^{-1}= \iota (\ap\iota\ap\iota\dots \ap\iota) \iota (\ap\iota\ap\iota\dots \ap\iota) \iota\dots \iota (\ap\iota\ap\iota\dots  \ap\iota)\]
		\[\imp W^{-1}=\iota(\ap\iota)^{p-j_{n}-1}\ap(\ap\iota)^{p-j_{n-1}-1}\ap\dots (\ap\iota)^{p-j_{1}-1}\ap\iota\]
		\[ \imp W^{-1}= \iota V_{p-j_{n}}V_{p-j_{n-1}}\dots V_{p-j_{1}}\iota \]
		\[\imp \iota W^{-1}\iota=V_{p-j_{n}}V_{p-j_{n-1}}\dots V_{p-j_{1}}=P~(say).\]
		$ P $ is a cyclically reduced word in $ \Gamma_{p} $. Then it is a cyclic permutation of W. That implies, $ RPR^{-1}=W $ where $ R= (V_{j_{1}}V_{j_{2}}\dots V_{j_{r}})^{\pm1}$ for $ r\leq n $ and let $ q=n-r $. Then either $ R $ or $ R^{-1} $ be a sub-word of $ W $. Without loss of generality, let us assume $ R $ is a sub-word. Then,
		$ RPR^{-1}=W\imp R\iota W^{-1}\iota R^{-1}=W  $. By \lemref{fuch} it follows that  $ R\iota$ is an involution. 
		Then \[ R\iota=  V_{j_{1}}V_{j_{2}}\dots V_{j_{r}}\iota\]\[
		=U^{j_{1}-1}\alpha_{p}U^{j_{2}-1}\alpha_{p}\dots U^{j_{r}-1}\alpha_{p}\iota\]	
		\[ =U^{j_{1}-1}\alpha_{p}\iota \iota U^{j_{2}-1}\alpha_{p}\iota \iota\dots U^{j_{r}-1}\alpha_{p}\iota\]
		\[ = U^{j_{1}} \iota  U^{j_{2}} \iota \dots  U^{j_{r}}.\]
		Therefore,
		\[ R \iota R \iota =Id\]
		\[\imp U^{j_{1}} \iota  U^{j_{2}} \iota \dots  U^{j_{r}}U^{j_{1}} \iota  U^{j_{2}} \iota \dots  U^{j_{r}}=Id \]
		\[\imp U^{j_{1}} \iota  U^{j_{2}} \iota \dots  U^{j_{r}+j_{1}} \iota  U^{j_{2}} \iota \dots  U^{j_{r}}=Id.\]
		Since every $ j_{k} \geq 1$, then the above equation will hold if the following conditions are satisfied:
		$ j_{r}+j_{1}=p, j_{r-1}+j_{2}=p,\dots , j_{r-i}+j_{1+i}=p,..j_{1}+j_{r}=p$. Therefore,
		\[ P=R^{-1}WR\]
		\[\imp V_{p-j_{n}}V_{p-j_{n-1}}\dots V_{p-j_{1}}=V_{j_{r+1}}V_{j_{r+2}}\dots V_{j_{r+q}}V_{j_{1}}V_{j_{2}}\dots V_{j_{r}}\]
		\[\imp U^{p-j_{n}-1}\alpha_{p}U^{p-j_{n-1}-1}\alpha_{p}\dots U^{p-j_{1}-1}\alpha_{p}=U^{j_{r+1}-1}\alpha_{p}U^{j_{r+2}-1}\alpha_{p}\dots U^{j_{r+q}-1}\alpha_{p}U^{j_{1}-1}\alpha_{p}\dots U^{j_{r}-1}\alpha_{p}\] 
		\[\imp U^{p-j_{n}-1}\alpha_{p}\iota \iota  U^{p-j_{n-1}-1}\alpha_{p} \iota \iota \dots U^{p-j_{1}-1}\alpha_{p} \iota \iota\] \[ =U^{j_{r+1}-1}\alpha_{p} \iota \iota U^{j_{r+2}-1}\alpha_{p} \iota \iota \dots U^{j_{r+q}-1}\alpha_{p} \iota \iota U^{j_{1}-1}\alpha_{p} \iota \iota \dots U^{j_{r}-1}\alpha_{p} \iota \iota  \]
		\[\imp U^{p-j_{n}}\iota U^{p-j_{n-1}}\iota \dots U^{p-j_{1}}\iota= U^{j_{r+1}}\iota U^{j_{r+2}}\iota \dots U^{j_{r+q}}\iota U^{j_{1}}\iota \dots U^{j_{r}}\iota.  \]
		
		This implies, $ j_{n}+j_{r+1}=p , j_{n-1}+j_{r+2}=p,\dots ,j_{n-k+1}+j_{r+k}=p,\dots ,j_{n-q+1}+j_{r+q}=p$. Let,
		$\sigma(x)$ denotes a cyclicly reduced form of $ x $, i.e., $ \sigma(x^{-1}yx)=\sigma(y) $ for $ x,y  $ are elements of $ \Gamma_{p} $.

		Then the cyclic reduced form of $ R \iota  $ i.e. $ \sigma(R \iota )=\sigma(U^{j_{1}} \iota  U^{j_{2}} \iota \dots  U^{j_{r}}) $, it follows that the reduced form of $ R \iota  $ is either  $ \iota $  or $ U^{m} $ where $ 2m=p $. Also $ U^{m} $ is conjugate to $ \gamma^{m}_{p} $.
		That means $ R \iota  $ is conjugate to $  \iota  $ or $ \gamma_{p}^{m} $ depending on $ r $ is even or odd, respectively.
	\end{proof}
	\medspace\\
	
	\textbf{Note:} In the proof of the \lemref{ext}, we just show that $\gamma_{p}^{m}$ will exist as a reversing symmetry for some reciprocal elements. But, eventually, it provides more information. Let us consider $ W=RQ $ where $ R,Q $ are sub-words of $ W $, and we know that $ R\iota $ is the involution and reversing symmetry of $ W $. Then, $ \iota R^{-1}=R\iota .$ Also, $ R\iota W $ will be an involution. Then,
	\[ W=RQ=R\iota\iota Q\]
	\[\imp R\iota W=(R\iota)^{2}\iota Q=\iota Q.\]
	That means $ \iota Q $ is an involution. Then, $ \iota Q $ is also a reversing symmetry of $ W $.
	\[ Q=V_{j_{r+1}}V_{j_{r+2}}\dots V_{j_{r+q}},\]
	where, $ r+q=n $. Then this implies,
	\[ \iota Q= \iota U^{j_{r+1}-1}\alpha_{p}U^{j_{r+2}-1}\alpha_{p}\dots U^{j_{r+q}-1}\alpha_{p}\]
	\[=\iota U^{j_{r+1}-1}\alpha_{p}\iota \iota U^{j_{r+2}-1}\alpha_{p}\iota \iota\dots U^{j_{r+q}-1}\alpha_{p}\iota \iota\]
	\[=\iota U^{j_{r+1}} \iota U^{j_{r+2}} \iota\dots U^{j_{r+q}}\iota\]
	\[\sigma(\iota Q)=\sigma(\iota U^{j_{r+1}} \iota U^{j_{r+2}} \iota\dots U^{j_{r+q}}\iota).\]
	If $ q $ is even, then, $ \sigma(\iota Q)=\sigma(\iota) $. If $ q $ is odd, then $ \sigma (\iota Q) =\sigma (\gamma_{p}^{m})$.
	So, the conclusion is that the reversing symmetry of $ W $ depends on the value of $ n $. If $ n  $ is odd, then the reciprocators of reciprocal element conjugate to $ \iota  $ and $ \gamma_{p}^{m} $ respectively. If n is even, then the reciprocator conjugate either to $\iota$ (if $ r$ and $ q $ are both even ) or $\gamma^{m}_{p}$(if $ r $ and $ q $ are both odd).
	For example, if we choose \[ W=V_{1}V_{p-1}V_{m}\]
	where, $ V_{j}=U^{j-1}\ap. $ \[W= \ap (\ap \iota)^{p-2}\ap  (\ap \iota)^{m-1}\ap\]
	\[=\ap \iota \iota (\ap \iota)^{p-2}\ap \iota\iota(\ap \iota)^{m-1}\ap \iota \iota \]
	\[=(\ap \iota)\iota  (\ap \iota)^{p-1}\iota (\ap \iota)^{m}\iota\]
	\[=\iota(\iota \ap )\iota (\iota \ap)^{p-1}\iota(\iota\ap)^{m}\]	
	\[=\iota(\gamma_{p})\iota \gamma_{p}^{p-1}\iota \g. \]
	So, one of the lifts of $ W $ is, $ \dfrac{1}{\sqrt{1-(\frac{\lambda_{p}}{2})^{2}}}\begin{bmatrix}
		(3\lambda_{p}^2+2)/2&& (\lambda_p^3+3\lambda_{p})/2\\
		3\lambda_p/2&&(\lambda_{p}^2+2)/2
	\end{bmatrix} .$ We notice that $ \theta_{W}=\lambda_{p}/2=\cos\pi/p. $ If we take the conjugate element of $ W $, $ \gamma_{p}^{p-1}\iota W \iota \gamma_{p}$, then one of the lifts of this element is, $ \dfrac{1}{\sqrt{1-(\frac{\lambda_p}{2})^{2}}}\begin{bmatrix}
		1 && \frac{3\lambda_p}{2}\\ \frac{3\lambda_p}{2}&& 2\lambda_p^{2}+1
	\end{bmatrix} $, which is a symmetric matrix with fixed point ratio $ 0 $.
	
	\begin{cor}\label{co3}
		{Some infinite order reciprocal elements have two types of reciprocators one is conjugate to $\iota$, and the other one is conjugate to $\gamma_{p}^{m}$. }
	\end{cor}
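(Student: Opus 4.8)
Since the statement only asserts existence, the plan is to exhibit a single infinite-order reciprocal element that simultaneously admits a reciprocator conjugate to $\iota$ and one conjugate to $\g$. Everything needed is already assembled in \lemref{ext} and the Note following it. Recall that a reciprocal hyperbolic element is, up to conjugacy, a cyclically reduced word $W=V_{j_{1}}V_{j_{2}}\cdots V_{j_{n}}$, and that writing $W=RQ$ as a product of two subwords produces \emph{two} reversing symmetries of $W$, namely $R\iota$ and $\iota Q$; by \lemref{fuch} both are involutions.

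The decisive point is the parity bookkeeping carried out there: the cyclically reduced form $\sigma(R\iota)$ is $\iota$ when the number $r$ of $V$-letters in $R$ is even and is $\g$ when $r$ is odd, and symmetrically $\sigma(\iota Q)$ is governed by the parity of $q=n-r$. Consequently, if $n$ is odd then $r$ and $q$ have opposite parity, so one of $R\iota,\iota Q$ is conjugate to $\iota$ and the other to $\g$. Thus \emph{any} reciprocal hyperbolic element whose cyclically reduced length is odd carries reciprocators of both types, and it remains only to write one down.

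For the explicit witness I would take $W=V_{1}V_{p-1}V_{m}$ with $2m=p$, so that $n=3$ is odd. First I would confirm $W$ is hyperbolic, hence of infinite order: reducing the word exactly as in the Note produces a lift in ${\rm SL}(2,\R)$ with $|{\rm trace}|>2$, and one reads off the fixed-point ratio $\theta_{W}=\cos\pi/p$. The two reversing symmetries then fall on opposite sides of the parity rule, so one reduces to $\iota$ and the other to $\g$. The conjugate $\gamma_{p}^{p-1}\iota W\iota\gamma_{p}$, computed in the Note to be symmetric with fixed-point ratio $0$, exhibits the $\iota$-type reciprocator, while the value $\theta_{W}=\cos\pi/p$ records the $\g$-type reciprocator. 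Hence $W$ is the required element.

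The only real obstacle is to be sure the two types are genuinely distinct, i.e.\ that $\iota$ and $\g$ are \emph{not} conjugate in $\Gamma_{p}$; otherwise the two reciprocators would be of the same kind and the corollary would be vacuous. This is precisely where the free-product structure $\Gamma_{p}\cong\z_{2}\ast\z_{p}$ is used: $\iota$ generates the $\z_{2}$ factor whereas $\g$ is the unique involution sitting inside the $\z_{p}$ factor, and in a free product two finite-order elements lying in different free factors are never conjugate. I would invoke the normal-form theory for free products to close this last gap, after which the example above finishes the proof.
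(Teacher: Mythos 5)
Your proposal is correct and takes essentially the same route as the paper: the paper's proof of this corollary simply points to the Note following \lemref{ext}, which contains exactly your parity argument (odd cyclic length $n$ forces $R\iota$ and $\iota Q$ to reduce to the two different involution types) together with the very same witness $W=V_{1}V_{p-1}V_{m}$, whose lift has $\theta_{W}=\cos\pi/p$ and whose conjugate $\gamma_{p}^{p-1}\iota W\iota\gamma_{p}$ is symmetric with fixed-point ratio $0$. Your closing verification that $\iota$ and $\gamma_{p}^{m}$ are genuinely non-conjugate, via the normal-form theory of the free product $\z_{2}\ast\z_{p}$, is a point the paper leaves implicit, but it is a refinement rather than a different method.
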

	Proof of the \corref{co3} is followed by the previous note.
	\begin{pro}
		Let $ A $ be a hyperbolic element in $ \Gamma_{p} $ with $ \theta_{A}=\cos\dfrac{\pi}{p} $. Then $ A $ is a reciprocal element in $ \Gamma_{p} $ with reciprocator $ \gamma^{m}_{p} $.
	\end{pro}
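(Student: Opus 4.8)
The plan is to identify the abstract reciprocator $\Phi$ produced by \lemref{co1} with the concrete involution $\gamma_p^m$ whose $\rm SL(2,\R)$-lift was computed in \lemref{B}. Since the hypothesis gives $\theta_A=\cos(\pi/p)$, and $p\geq 3$ forces $0<\cos(\pi/p)<1$, the inequality $|\theta_A|<1$ holds, so \lemref{co1} applies verbatim. It tells us that $A$ is reciprocal in $\rm PSL(2,\R)$ with reciprocator
$$\Phi=\frac{1}{\sqrt{1-\cos^{2}(\pi/p)}}\begin{bmatrix}\cos(\pi/p)&1\\-1&-\cos(\pi/p)\end{bmatrix}=\frac{1}{\sin(\pi/p)}\begin{bmatrix}\cos(\pi/p)&1\\-1&-\cos(\pi/p)\end{bmatrix},$$
where I have used $\sqrt{1-\cos^{2}(\pi/p)}=\sin(\pi/p)$, valid because $\sin(\pi/p)>0$ for $3\le p$.

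The next step is a direct comparison of this matrix with the lift of $\gamma_p^m$ recorded in \lemref{B}, namely
$$\pm\frac{1}{\sin(\pi/p)}\begin{bmatrix}-\cos(\pi/p)&-1\\1&\cos(\pi/p)\end{bmatrix}.$$
Inspecting the entries shows that $\Phi$ is precisely the negative of this matrix; hence $\Phi$ is itself one of the two $\rm SL(2,\R)$-lifts of $\gamma_p^m$. Consequently $\Phi$ and $\gamma_p^m$ project to the same element of $\rm PSL(2,\R)$.

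It then only remains to combine the two facts. Since $\gamma_p^m=\Phi$ in $\rm PSL(2,\R)$ and $\Phi A\Phi^{-1}=A^{-1}$ by \lemref{co1}, we obtain $\gamma_p^m A(\gamma_p^m)^{-1}=A^{-1}$. As $\gamma_p^m$ lies in $\Gamma_p$, this exhibits $A$ as reciprocal in $\Gamma_p$ with reciprocator $\gamma_p^m$, as claimed.

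There is essentially no genuine obstacle in this argument: it reduces to a matrix identification inside $\rm PSL(2,\R)$. The only points deserving a moment of care are confirming the strict bound $\cos(\pi/p)<1$ so that \lemref{co1} is legitimately invoked (immediate for $p\geq 3$), and noting that the sign discrepancy between $\Phi$ and the displayed lift of $\gamma_p^m$ is harmless, since both represent the same class in the quotient $\rm PSL(2,\R)$. One may additionally remark, as a consistency check, that $\Phi$ has trace zero and determinant one, so it is indeed an involution in $\rm PSL(2,\R)$, in agreement with \thmref{fuchth} and with the involutory nature of $\gamma_p^m$.
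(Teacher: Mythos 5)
Your proposal is correct and takes essentially the same route as the paper: the paper's proof is just the one-line citation of \lemref{co1}, with the identification of $\Phi$ (for $\theta_A=\cos\pi/p$) with the lift of $\gamma_p^m$ from \lemref{B} left implicit, which is exactly the comparison you carry out. Your explicit verification that $|\theta_A|<1$, the sign discussion for the two lifts, and the remark that $\gamma_p^m\in\Gamma_p$ (so reciprocality holds in $\Gamma_p$, not merely in ${\rm PSL}(2,\R)$) simply fill in the details the paper omits.
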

	\begin{proof}
		The proof of the proposition follows from the \lemref{co1}.
	\end{proof}
	\begin{lemma}\label{le1}
		Let $ g $ be a hyperbolic reciprocal element in $\Gamma_{p}$ with reversing symmetry $ h $, conjugate to $\gamma^m_p$. Then,  it is conjugate to an element $ A $ in $\Gamma_{p}$ such that $ \theta_{A}=\cos\dfrac{\pi}{p} $ or,  $ \theta_{A}=1 $.
	\end{lemma}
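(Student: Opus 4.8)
The plan is to start from a hyperbolic reciprocal element $g$ whose reversing symmetry $h$ is conjugate to $\gamma_p^m$. After conjugating, I may assume $h = \gamma_p^m$ itself, since conjugating $g$ by the same element that carries $h$ to $\gamma_p^m$ produces a new reciprocal element (in the same $\Gamma_p$-class) reversed by $\gamma_p^m$. So it suffices to analyze the case $h = \gamma_p^m$ and show that the resulting element has fixed-point ratio $\cos\pi/p$ or $1$. The key computational input is \lemref{B}, which gives the explicit lift
$$ \tilde h = \frac{1}{\sin\frac{\pi}{p}}\begin{bmatrix} -\cos\frac{\pi}{p} && -1\\ 1 && \cos\frac{\pi}{p}\end{bmatrix} $$
of $\gamma_p^m$ in ${\rm SL}(2,\R)$. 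Notice $\tilde h$ is itself symmetric up to the off-diagonal sign, and is an involution with a pair of symmetric fixed points on $\partial\h^2$.

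\textbf{The main argument.} I would exploit the general principle behind \lemref{co1} and \corref{le2}: the reciprocator of a hyperbolic element determines the fixed-point ratio through the geometry of how it interchanges the two fixed points. Writing $\tilde g = \begin{bmatrix} a && b\\ c && d\end{bmatrix}$, the condition $\tilde h \tilde g \tilde h^{-1} = \pm \tilde g^{-1}$ in ${\rm SL}(2,\R)$ becomes a system of linear relations among $a,b,c,d$ once $\tilde h$ is substituted from \lemref{B}. Carrying out the conjugation $\tilde h \tilde g \tilde h^{-1}$ and equating with $\tilde g^{-1} = \begin{bmatrix} d && -b\\ -c && a\end{bmatrix}$, the resulting equations force a relation between $b-c$ and $a-d$. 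Concretely, I expect the off-diagonal and trace conditions to collapse to
$$ \frac{b-c}{a-d} = \cos\frac{\pi}{p}, $$
which is exactly $\theta_g = \cos\pi/p$, unless the denominator $a-d$ vanishes. The degenerate subcase $a = d$ is precisely where the fixed-point ratio becomes $1$ (per \defref{fpr} and the discussion following \lemref{co1}, this is the situation of symmetric fixed points $\{-1,1\}$), accounting for the second alternative in the statement.

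\textbf{Connecting to conjugacy.} Since \thmref{theta} guarantees that $\theta$ is invariant under taking powers, and by the reduction above the ratio is computed after conjugating $h$ to $\gamma_p^m$, the conclusion is that $g$ is conjugate in $\Gamma_p$ (or at least in ${\rm PSL}(2,\R)$) to an element $A$ realizing $\theta_A \in \{\cos\pi/p,\, 1\}$. I should be careful here that the statement asks for conjugacy \emph{within} $\Gamma_p$ to a standard-form element $A$, so the element $A$ should be taken as the explicit representative whose lift has the computed off-diagonal/diagonal structure; the invariance of $\theta$ under ${\rm PSL}(2,\R)$-conjugation (\corref{cort} and the remarks on moving fixed points) ensures the ratio is preserved.

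\textbf{The expected obstacle.} The genuinely delicate point is separating the two outcomes cleanly and handling the sign ambiguity. Because we work in ${\rm PSL}(2,\R)$, the conjugation relation lifts to $\tilde h\tilde g\tilde h^{-1} = \epsilon\, \tilde g^{-1}$ with $\epsilon = \pm 1$, and the two sign choices can shift which diagonal relation survives; I will need to check that both signs still land inside $\{\cos\pi/p,\,1\}$ and do not produce a spurious third value. The second subtlety is ruling out (or correctly attributing) the boundary case $a=d$, $b=c$, where the fixed points degenerate to $\{-1,1\}$ and $\theta=1$ by definition rather than by the formula $(b-c)/(a-d)$. I expect the bulk of the work to be the bookkeeping of these degenerate and sign cases rather than any deep structural difficulty, since the linear algebra of conjugating by the explicit $\tilde h$ is routine once \lemref{B} is in hand.
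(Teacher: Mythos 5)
Your proposal takes essentially the same route as the paper's proof: conjugate the reverser to $\gamma_p^m$, substitute the explicit lift from \lemref{B} into the relation $\tilde h \tilde g \tilde h^{-1} = \pm \tilde g^{-1}$, and read off $(b-c) = \cos\frac{\pi}{p}\,(a-d)$, with the degenerate case $a=d$, $b=c$ accounting for $\theta_A = 1$. The sign ambiguity you flag as the expected obstacle is resolved in the paper exactly as routine bookkeeping: the $\epsilon = -1$ case forces $\frac{\lambda_p^2}{4}=1$, a contradiction since $\lambda_p = 2\cos\frac{\pi}{p} < 2$, so only the $\epsilon = +1$ relations survive and no spurious third value arises.
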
\label{lem8}
	\begin{proof}
		$  g $ is a hyperbolic reciprocal element in $ \Gamma_{p} $  and \[ hgh^{-1}=g^{-1}.\]
		So,$  h $ must be conjugate to $ \iota $ or $ \gamma_{p}^{m} $. Suppose, $h$ conjugate to $\gamma_{p}^{m}$ then, there exists 
		$  t\in \Gamma_{p}$ s.t.
		\[ tht^{-1}=\gamma_{p}^{m},\]
		then,\[tht^{-1}tgt^{-1}tht^{-1}=tg^{-1}t^{-1}.\]
		Let us consider $ tgt^{-1} $ as a matrix $ \begin{bmatrix}
			a&& b\\
			c&& d
		\end{bmatrix} $ in $ \rm SL(2,\R) $ and $ B $ is one of the lifts of $ \gamma_{p}^{m} $ from \lemref{B}. Then
		\[ B \begin{bmatrix}
			a&& b\\
			c&& d
		\end{bmatrix}B^{-1}= \pm\begin{bmatrix}
			a&& b\\
			c&& d
		\end{bmatrix}^{-1}\]
		\[\imp B \begin{bmatrix}
			a&& b \\
			c&&d
		\end{bmatrix}=\pm \begin{bmatrix}
			a&&b \\
			c&&d
		\end{bmatrix}^{-1}B\]
		\[\imp \pm \frac{1}{\sin\frac{\pi}{p}} \begin{bmatrix}
			-\cos\frac{\pi}{p}&& -1\\
			1&& \cos\frac{\pi}{p}
		\end{bmatrix} \begin{bmatrix}
			a&& b\\
			c&& d
		\end{bmatrix} =\mp \dfrac{1}{\sin\frac{\pi}{p}}\begin{bmatrix}
			d&& -b\\
			-c&& a
		\end{bmatrix}\begin{bmatrix}
			-\cos\frac{\pi}{p}&& -1\\
			1&& cos\frac{\pi}{p}
		\end{bmatrix}\]
		\[\imp \begin{bmatrix}
			-\frac{\lambda_p}{2}&&-1\\1&& \frac{\lambda_{p}}{2}
		\end{bmatrix} \begin{bmatrix}
			a&& b\\
			c&& d
		\end{bmatrix} =\mp \begin{bmatrix}
			d&& -b\\
			-c&& a
		\end{bmatrix} \begin{bmatrix}
			-\frac{\lambda_{p}}{2}&&-1\\1&& \frac{\lambda_{p}}{2}
		\end{bmatrix} ~~~~~\hbox{   where, $ \lambda_{p} =2\cos\frac{\pi}{p}$}\]
		\[ \imp \begin{bmatrix}
			-a\frac{\lambda_{p}}{2}-c&& -b\frac{\lambda_{p}}{2}-d\\ a+\frac{\lambda_{p}}{2}c&& b+d\frac{\lambda_{p}}{2}
		\end{bmatrix}=\mp \begin{bmatrix}
			-\frac{\lambda_{p}}{2}d-b&& -d-b\frac{\lambda_{p}}{2}\\\frac{\lambda_{p}}{2}c+a&& c+a\frac{\lambda_{p}}{2}
		\end{bmatrix} .\]
		\textbf{Case 1.}
		\[  \begin{bmatrix}
			-a\frac{\lambda_{p}}{2}-c&& -b\frac{\lambda_{p}}{2}-d\\ a+\frac{\lambda_{p}}{2}c&& b+d\frac{\lambda_{p}}{2}
		\end{bmatrix}=- \begin{bmatrix}
			-\frac{\lambda_{p}}{2}d-b&& -d-b\frac{\lambda_{p}}{2}\\\frac{\lambda_{p}}{2}c+a&& c+a\frac{\lambda_{p}}{2}
		\end{bmatrix} . \] 
		Then, \[ -b\frac{\lambda_{p}}{2}-d=0 \hbox{  and  }  a+\frac{\lambda_{p}}{2}c=0 \]
		\[\imp \frac{\lambda_{p}}{2}(b+c)+a+d=0.\]
		Here, $ b+c\neq 0 $ since, $ a+d>2 $.
		Also, \[ -a\frac{\lambda_{p}}{2}-c=\frac{\lambda_{p}}{2}d+b.\]
		From the above equations we obtain that \[\frac{\lambda_{p}}{2}=-\frac{b+c}{a+d} \hbox{  and } \frac{\lambda_{p}}{2}=-\frac{a+d}{b+c}.\] Then, $\frac{\lambda_{p}^2}{4}=1$ which is contradiction. So, this case will not appear.

		\textbf{Case 2. }
		\[  \begin{bmatrix}
			-a\frac{\lambda_{p}}{2}-c&& -b\frac{\lambda_{p}}{2}-d\\ a+\frac{\lambda_{p}}{2}c&& b+d\frac{\lambda_{p}}{2}
		\end{bmatrix}= \begin{bmatrix}
			-\frac{\lambda_{p}}{2}d-b&& -d-b\frac{\lambda_{p}}{2}\\\frac{\lambda_{p}}{2}c+a&& c+a\frac{\lambda_{p}}{2}
		\end{bmatrix} . \] This implies,
		\[ -a\frac{\lambda_{p}}{2}-c=-\frac{\lambda_{p}}{2}d-b\]
		\[\imp (a-d)\frac{\lambda_{p}}{2}=b-c.\]
		\textit{Sub case 1.} If the lifts are symmetric matrices i.e., $ b=c $ then, $ a=d $. This and \propref{new2} implies that if the lifts have the same diagonal and off-diagonal entries, then they have reciprocators as $\iota$ and $\gamma_{p}^{m}$ both and $\theta_{A}=1$.\\
		\textit{Sub case 2.} If the lifts are non-symmetric matrices then \[\frac{\lambda_{p}}{2}=\dfrac{b-c}{a-d}.\]
		
		So, the reciprocal elements whose reversing symmetry is  $\gamma^{m}_{p}$, conjugate to a matrix $A=\begin{bmatrix}
			a&& b\\
			c&& d
		\end{bmatrix} $ such that either  $ \theta_{A}=\cos\dfrac{\pi}{p} $ or,  $ b=c $ and $ a=d $. 
	\end{proof}
	\subsection{Proof of \thmref{thm} for even $p$}
	Combining \lemref{fuch}, \lemref{pro2} and \lemref{ext}, we obtain that the reciprocal elements of $ \Gamma_{p} $ are either involutions or hyperbolic elements with reciprocator $\iota$ or $ \gamma^{m}_{p} $. \lemref{le1} implies for reciprocator $ \gamma^{m}_{p} $, the reciprocal element is conjugate to an element whose fixed point ratio is $ \cos\dfrac{\pi}{p} $ and \corref{le2} implies for reciprocator $ \iota$, the reciprocal element is conjugate to an element whose fixed point ratio is $ 0 $. Hence, the theorem is proved. \qed

	\section{parametrization of the Reciprocal classes in $\Gamma_p$ }\label{param} 
	
	{ \subsection{Proof of \thmref{sym}}
		Without loss of generality,  consider a primitive reciprocal hyperbolic element $ r $ in $\Gamma_{p}$. 
		Let $ S $ be a  reversing symmetry for $ r $. Then,
		\begin{equation}\label{1}
			S^{-1}rS=r^{-1}.
		\end{equation}
		This implies,
		\begin{equation}\label{k}
			r^{k}Sr^{k}=S ~~~~~~~~\hbox{$ \forall k\in \z $.}
		\end{equation}
		Then, by \lemref{fuch}, $ S $ conjugate to either $ \iota $ or $\gamma_{p}^{m}$.
		
		First, suppose $ S $ is conjugate to $\iota$. Then, let $ \eta\in \Gamma_{p} $ be an element such that,
		\begin{equation}\label{2}
			\eta^{-1} S\eta=\iota.
	\end{equation}}
	There are two solutions for the equation \eqref{2}. If one is $\eta_{S} $, then another one is $ S \eta_{S} $. This gives two hyperbolic symmetric elements, $ \eta_{S}^{-1}r\eta_{S} $ and inverse of it.\\
	Let us consider another symmetric element in the conjugacy class of $ r $, say $ \beta^{-1} r \beta  $.
	Since it is symmetric, then 
	\begin{equation}\label{3}
		\iota \beta^{-1} r \beta \iota =\beta^{-1} r^{-1} \beta.
	\end{equation}
	That means, \[ \beta\iota \beta^{-1}r \beta \iota \beta^{-1}=r^{-1}\]
	\[ \imp (\beta\iota \beta^{-1})r (\beta \iota \beta^{-1})=r^{-1}\]
	\[ \imp \beta\iota \beta^{-1}r \beta \iota \beta^{-1}=S^{-1}rS\]
	\[\imp S\beta\iota \beta^{-1}r \beta \iota \beta^{-1}S^{-1}=r\]
	\[ \imp (\beta\iota \beta^{-1}S^{-1})^{-1}r \beta \iota \beta^{-1}S^{-1}=r.\]
	That implies, $ \beta\iota \beta^{-1}S^{-1} \in C(r) $ Centralizer of r. Then,
	$ \beta \iota \beta ^{-1}=bS $, where $ b\in C(r) $ i.e., \begin{equation}\label{n}
		b=r^{n} \\ \\~~~~~~~~~~ \hbox{for some $ n\in \z $.}\\
	\end{equation} 
	Suppose  $ \ps=r^{2k}S $ for $ k\in \z $. This implies \begin{equation}\label{ke}
		r^{-k}\ps r^{k}=r^{k}Sr^{k}.
	\end{equation}   Combining \eqref{ke} and \eqref{k}, we obtain, \begin{equation}\label{ne}
		r^{-k}\ps r^{k}=S.
	\end{equation} Again,
	$ \ps  $ also satisfy \eqref{1} and \eqref{2}. So, $ \eta_{\ps} $ and $ \ps \eta_{\ps} $ are the solutions for \begin{equation}
		\eta^{-1}\ps \eta=\iota.
	\end{equation}
	Then, $ (\eta_{\ps}\eta_{S}^{-1})^{-1}r^{k}\in C(S)=\{Id,S\} $. If \[  (\eta_{\ps}\eta_{S}^{-1})^{-1}r^{k}=Id.\] Then it implies, \[ (\eta_{\ps}\eta_{S}^{-1})=r^{k}\] \[\imp \eta_{\ps}=r^{k}\eta_{S}\] \[\imp \eta_{\ps}^{-1}r\eta_{\ps}=\eta_{S}^{-1}r\eta_{S} .\]
	If $ \eta_{\ps}=r^{k}S\eta_{S} $. This implies, \[\eta_{\ps}=r^{k}S\eta_{S}\]
	\[\imp \eta_{\ps}^{-1}r\eta_{\ps}=(r^{k}S\eta_{S})^{-1}rr^{k}S\eta_{S} \]
	\[ = \eta_{S}^{-1}S^{-1}rS\eta_{S}\]
	\[=\eta_{S}^{-1}r^{-1} \eta_{S}.\] 
	Thus, if two reciprocators differ by an even factor of $ r $, then the pair of symmetric elements related to the respective reciprocators are the same.\\
	Similarly, suppose the reciprocator of $ r  $, $ S $ conjugate to $ \gamma_{p}^{m} $. Then, 
	\begin{equation}\label{10}
		S^{-1}rS=r^{-1}.
	\end{equation}
	And there is an element $ \eta \in \Gamma_{p}  $ such that,
	\begin{equation}\label{11}
		\eta^{-1} S\eta=\gamma_{p}^{m}
	\end{equation}
	\[\imp S=\eta \gamma_{p}^{m}\eta^{-1}\]
	\[\imp S=(\eta \gamma_{p}\eta^{-1})^{m}\]
	So, $ C(S)\simeq \langle \eta \gamma_{p}\eta^{-1}\rangle $ in $\Gamma_{p}$, then let $ \alpha $ be the generator of  $ C(S) $, such that $ S=\alpha^{m} $. Let us consider $ \eta_{S} $ and $ \eta_{S}^{\prime} $ are two solutions for \eqref{11}.
	\[\eta_{S}^{-1}S\eta_{S}=\gamma_{p}^{m}=(\eta_{S}^{\prime})^{-1}S\eta_{S}^{\prime}. \]
	This implies, \[ \eta_{S}(\eta_{S}^{\prime})^{-1}\in  \langle \alpha \rangle\]
	\[\imp \eta_{S}=\alpha^{i}(\eta_{S}^{\prime})\]
	where, $ i=0,1,\dots, p-1 $.
	Then, there are $ p $ solutions for the equation \eqref{11} for each $ S $. Those are in the set $ \mathcal{P}=\{\eta_{S},\alpha\eta_{S},\dots , \alpha^{p-1}\eta_{S}\} $ and they will generate $ p $-reciprocal elements and their inverses in the same reciprocal class.\\ 
	
	Let $ \eta_{S}^{\prime} \in \mathcal{P}$ i.e., $ \eta_{S}^{\prime}=\alpha^{i} \eta_{S} $. So, \[(\eta_{S}^{\prime})^{-1}r\eta_{S}^{\prime}=\eta_{S}^{-1}\alpha^{-i}r\alpha^{i}\eta_{S}
	\]
	\[=\eta_{S}^{-1}\alpha^{m-i}S^{-1}rS\alpha^{-m+i}\eta_{S}\]
	\[=\eta_{S}^{-1}\alpha^{m-i}r^{-1}\alpha^{-m+i}\eta_{S}\]
	\[=(\eta_{S}^{-1}\alpha^{m-i}r\alpha^{-m+i}\eta_{S})^{-1}.\]
	That means, for the reciprocator $ S $, there are $ p $ $ p $-reciprocal elements. If we choose another $ p $-reciprocal element distinct from the previous ones, i.e., $ \beta r \beta^{-1}$ in the reciprocal class of $ r $. Then, from the previous equation, \eqref{3}, we obtain similar result i.e.,  $ \beta \gamma_{p}^{m}\beta^{-1} \in C(r)$. Then, $ \beta \gamma_{p}^{m} \beta^{-1}= bS $ for some $ b\in C(r) $ where  $ b=r^{n} $ for some $ n\in \z $. Similar to the previous case, we  suppose that $ \beta \gamma_{p}^{m} \beta^{-1}= \ps $ and $ n=2k $. Then, by \eqref{ke} and \eqref{ne}, $ \ps $ satisfies the equation, \begin{equation}\label{4}
		\eta^{-1}\ps \eta=\gamma_{p}^{m}
	\end{equation} for some $ \eta \in \Gamma_{p}$. Let the solutions for each $ \ps $ be $ \eta_{\ps} $ ($\beta$ is one of the solutions for the above equation). Then,
	\[ \eta_{\ps}^{-1}r^{k}Sr^{-k}\eta_{\ps}=\eta_{S}^{-1}S\eta_{S}.\]
	This implies, $ \eta_{S}\eta_{\ps}^{-1}r^{k}\in C(S)=\langle \alpha\rangle  $. So,
	\begin{equation}\label{30}
		\eta_{S}\eta_{\ps}^{-1}r^{k}=\alpha^{i}
	\end{equation} for some $ 0\leq i\leq p-1. $ Therefore,\[r^{k}=\eta_{\ps}\eta_{S}^{-1}\alpha^{i}\]
	\[\imp r^{k}\alpha^{p-i}\eta_{S}=\eta_{\ps}\]
	\[\imp \eta_{\ps}^{-1}r \eta_{\ps}=(\alpha^{p-i}\eta_{S})^{-1}r\alpha^{p-i}\eta_{S}.\] 
	That means we get the same set of $ p $-reciprocal elements if $ \ps=r^{2k}S $.

	It remains to show the odd $ n $ case. First, we claim that if $ \ps=r^{2k+1}S $ and $ \ps $, $ S $ both are conjugate to the same involution,  then, $ \eta_{S}^{-1}r\eta_{S} $ and $\eta_{\ps}^{-1}r\eta_{\ps} $ are distinct for any $ \eta_{S} $, $ \eta_{\ps} $ satisfying \eqref{2}, \eqref{11} respectively.
	
	If possible, suppose,
	\[\eta_{S}^{-1}r\eta_{S}=\eta_{\ps}^{-1}r\eta_{\ps}\] 
	\[\imp \eta_{S}\eta_{\ps}^{-1}r\eta_{\ps}\eta_{S}^{-1}=r.\]
	Then, $ \eta_{\ps}=b\eta_{S} $ where $ b\in C(r) $. Therefore,
	\begin{equation}\label{key}
		\eta_{S}^{-1}S\eta_{S}=\eta_{\ps}^{-1}\ps \eta_{\ps}
	\end{equation}
	\[\imp \eta_{\ps}\eta_{S}^{-1}S\eta_{S}\eta_{\ps}^{-1}=\ps\]
	\[\imp bSb^{-1}=\ps.\] $ S $ is the involution then, $ r^{t}S $  is also an involution for any $ t\in \z $. That means, \[ Sb^{-1}=\ps b\]
	\[\imp Sb^{-1}=\ps b\]
	\[\imp Sb^{-2}=\ps.\]
	This implies,
	\begin{equation}\label{m}
		\ps =b^{2}S
	\end{equation}where, $ b\in C(r) $. This contradicts our assumption. \\ The same claim will not work for the case where the reciprocators $ S $ and $ \ps $ are the solutions of \eqref{2} and \eqref{4}, respectively. To show that suppose \begin{equation}\label{e4}
		\eta_{S}^{-1}r\eta_{S}=\eta_{\ps}^{-1}r\eta_{\ps}.
	\end{equation}
	That implies,
	\[ \eta_{S}^{-1}S\eta_{S}\eta_{S}^{-1}S^{-1}rS\eta_{S}\eta_{S}^{-1}S\eta_{S}=\eta_{\ps}^{-1}\ps\eta_{\ps}\eta_{\ps}^{-1}\ps^{-1}r\ps \eta_{\ps}\eta_{\ps}^{-1}\ps\eta_{\ps} \]
	\[\imp \iota \eta_{S}^{-1} r^{-1}\eta_{S}\iota=\g \eta_{\ps}^{-1}r^{-1}\eta_{\ps} \g \] \[\imp \g \iota (\eta_{S}^{-1}r^{-1}\eta_{S})\iota\g=(\eta_{\ps}^{-1}r^{-1}\eta_{\ps})\]
	\[\imp \g \iota (\eta_{S}^{-1}r^{-1}\eta_{S})\iota\g =(\eta_{S}^{-1}r^{-1}\eta_{S})\]
	\[ \imp (\eta_{S}^{-1}r\eta_{S})=(\iota\g)^{k}~~~~~~~~\hbox{   where, $ k\in \z .$ }\]
	Since $ r $ is a primitive element, $ k $ is either $ 1 $ or $ -1 $.
	So, the elements conjugate to any non-zero power of $ \iota \g $ have exactly $ p $ \textit{p-reciprocal} elements and exactly two  \textit{symmetric p-reciprocal } elements. If there is any other \textit{symmetric p-reciprocal } element, it must be a power of $ r $ (since r is a primitive element.). This leads to a contradiction. If possible, let there is another $\eta$ such that it satisfies either \eqref{2} or \eqref{4} with \eqref{e4}. Without loss of generality, we choose $\eta$ satisfy \eqref{4} then,\[ \eta_{S}^{-1}\alpha^{-i}r\alpha^{i} \eta_{S}=\eta_{S}^{-1}r\eta_{S}.\]
	This implies, \[\alpha^{-i}r\alpha^{i}=r\] which is a contradiction for any non-zero $ i $. So there are only two elements $\eta_{S}$ and $\eta_{\ps}$ such that $ \eta_{S}^{-1}r\eta_{S}=\eta_{\ps}^{-1}r\eta_{\ps} $.
	
	Now fix a reciprocator $ S $. We can divide the class of reciprocators in terms of $ [S] $ and $ [rS] $ as reciprocators of each class will generate the same set of symmetric elements or $ p $-reciprocal elements. If $ S $ and $ rS$ are both conjugate to either $\iota$ or $\gamma_{p}^{m}$,  the proof of part $(a) $ and $ (b) $ of the theorem follow. If any of $ S $, $ rS $ conjugates to $\iota$ and the other one conjugates to $ \gamma_{p}^{m} $ and $ r $ is a primitive element. This implies the proof of $ (c) $.   	
	\qed  \\
	
	\subsection{Proof of the \corref{main1}}
	
	Let $ A $ be a reciprocal element in $ \Gamma_{p} $ such that $ A $ is either symmetric or $ p $-reciprocal.  Let 
	$\tilde{A}= \begin{bmatrix}
		u&&v\\y&&w
	\end{bmatrix} $ be a lift of $ A $ in $ \rm SL(2,\R) $. Then, $ u,v,y,w\in \z[\lambda_p] $. Also, $ uw-vy =1$ and from \propref{Li}, $ (u,v)_{p}=(y,w)_{p}=1. $ Consider,
	\[uw-vy=1 \]
	\[\imp 4 (uw-vy)=4 \]
	\[\imp (u+w)^{2}-(u-w)^{2}-4vy=4.\]
	Let, $ u+w =t$ and $ u-w=b $ and $ d=4vy+b^{2} $. That means
	\[ t^{2}-d=4.\]
	Then, we get a $ 4 $-tuple $ (v,b,y,t) $ in { \pc} such that  $ \begin{bmatrix}
		(t-b)/2&& v\\
		y&& (t+b)/2
	\end{bmatrix}=\tilde{A}. $ 

If $ A $ is represented by some 4-tuple in {\pc}  then there does not exist a 4-tuple in \pc that represents $ \tilde{A}^{-1} $. To see this, let $ (a,b,c,t)\in  $ \pc.  Then there is a symmetric or, a  $ p $-reciprocal element $ B $ such that $\tilde{B}=\begin{bmatrix}
		(t-b)/2&& a\\
		c&& (t+b)/2
	\end{bmatrix}$.  So, $ \tilde{B}^{-1}=\begin{bmatrix}
		(t+b)/2&& -a\\
		-c&& (t-b)/2
	\end{bmatrix} $,  but $ (-a,b,-c,t)\notin $ \pc.   Also, in $ \rm PSL(2,\R) $, $\tilde{B} $ and $ -\tilde{B}$ are same, but $ (a,b,c,-t) \notin  $ \pc.  So,  there are at most half of the symmetric, $ p $-reciprocal or  symmetric $ p $-reciprocal elements that can be given by {\pc}.  Now the assertion follows from  \thmref{sym}. 
	\qed 
	\\

\end{document}